\renewcommand{\arraystretch}{2}
\newcommand{\cross}{\includegraphics[width=0.43cm]{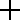}}
\newcommand{\bcross}{\includegraphics[width=0.43cm]{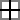}}
\newcommand{\elbows}{\includegraphics[width=0.43cm]{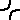}}
\newcommand{\belbows}{\includegraphics[width=0.43cm]{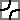}}
\newcommand{\elbow}{\includegraphics[width=0.43cm]{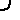}}
\newcommand{\elbowup}{\includegraphics[width=0.43cm]{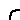}}
\newcommand{\belbowup}{\includegraphics[width=0.43cm]{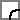}}
\newcommand{\elbowsign}{\includegraphics[width=0.43cm]{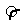}}
\newcommand{\belbowsign}{\includegraphics[width=0.43cm]{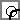}}
\newcommand{\elbowssign}{\includegraphics[width=0.43cm]{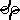}}
\newcommand{\belbowssign}{\includegraphics[width=0.43cm]{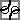}}
\newcommand{\crosssign}{\includegraphics[width=0.43cm]{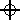}}
\newcommand{\bcrosssign}{\includegraphics[width=0.43cm]{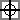}}
\newcommand{\boxx}{\includegraphics[scale=0.3]{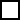}}
\newcommand{\crosscirc}{\includegraphics[width=0.43cm]{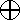}}
\newlength\cellsize \setlength\cellsize{12\unitlength}
\newcommand\cellify[1]{\def\thearg{#1}\def\nothing{}%
	\ifx\thearg\nothing\vrule width0pt height\cellsize depth0pt%
	\else\hbox to 0pt{\usebox2\hss}\fi%
	\vbox to 12\unitlength{\vss\hbox to 12\unitlength{\hss$#1$\hss}\vss}}
\newcommand\tableau[1]{\vtop{\let\\=\cr
		\setlength\baselineskip{-12000pt}
		\setlength\lineskiplimit{12000pt}
		\setlength\lineskip{0pt}
		\halign{&\cellify{##}\cr#1\crcr}}}
\newcommand{\CC}{{\mathbb{C}}}
\newcommand{\ZZ}{{\mathbb{Z}}}
\newcommand{\QQ}{{\mathbb{Q}}}
\newcommand{\NN}{{\mathbb{N}}}
\DeclareMathOperator{\GL}{GL}
\DeclareMathOperator{\SO}{SO}
\DeclareMathOperator{\Sp}{Sp}
\DeclareMathOperator{\pt}{pt}
\DeclareMathOperator{\id}{id}
\theoremstyle{plain}
\newtheorem{theorem}{Theorem}[section]
\newtheorem{proposition}[theorem]{Proposition}
\newtheorem{corollary}[theorem]{Corollary}
\theoremstyle{definition}
\newtheorem{definition}[theorem]{Definition}
\newtheorem{example}[theorem]{Example}
\theoremstyle{remark}
\newtheorem{remark}[theorem]{Remark}
\DeclareMathOperator{\wt}{wt}
\DeclareMathOperator{\var}{var}
\DeclareMathOperator{\word}{word}
\DeclareMathOperator{\PD}{PD}
\DeclareMathOperator{\DPD}{DPD}
\DeclareMathOperator{\SSYT}{SSYT}
\DeclareMathOperator{\CYT}{CYT}
\newcommand{\Sc}{\mathcal S}
\newcommand{\Bc}{\mathcal B}
\newcommand{\Cc}{\mathcal C}
\newcommand{\BCc}{\mathcal{BC}}
\newcommand{\Dc}{\mathcal D}
\newcommand{\Fc}{\mathcal F}
\newcommand{\Sf}{\mathfrak S}
\newcommand{\Bf}{\mathfrak B}
\newcommand{\Cf}{\mathfrak C}
\newcommand{\Df}{\mathfrak D}
\newcommand{\Ff}{\mathfrak F}
\newcommand{\Ar}{\mathscr A}
\newcommand{\pp}{\mathbf p}
\newcommand{\xx}{\mathbf x}
\newcommand{\zz}{\mathbf z}
\newcommand{\ttt}{\mathbf t}
\newcommand{\aaa}{\mathbf a}
\newcommand{\bb}{\mathbf b}
\title{Pipe dreams for Schubert polynomials of the classical groups}
\author{Evgeny Smirnov}
\email{esmirnov@hse.ru}
\address{HSE University, Russian Federation, ul. Usacheva 6, 119048 Moscow, Russia}
\address{Independent University of Moscow, Bolshoi Vlassievskii per. 11, 119002 Moscow, Russia}
\author{Anna Tutubalina}
\email{anna.tutubalina@gmail.com}
\address{HSE University, Russian Federation, ul. Usacheva 6, 119048 Moscow, Russia}
\date{\today}
\begin{document}

\setcounter{MaxMatrixCols}{30}

\begin{abstract}
 Schubert polynomials for the classical groups were defined by S.\,Billey and M.\,Haiman in 1995; they are polynomial representatives of Schubert classes in a full flag variety over a classical group. We provide a combinatorial description for these polynomials, as well as their double versions, by introducing analogues of pipe dreams, or RC-graphs, for Weyl groups of classical types. 
\end{abstract}

\maketitle

\tableofcontents

\section{Introduction}\label{sec:intro}

\subsection{Flag varieties and Schubert polynomials} Let $G$ be a reductive group over $\CC$. Let us fix a Borel subgroup $B$ in $G$ and the corresponding maximal torus $T\subset B$.  A classical result of Borel~\cite{Borel53} states that the cohomology ring $H^*(G/B,\ZZ)$ is isomorphic as a graded ring to the \emph{coinvariant} ring of the Weyl group $W$ of $G$, i.e. to the quotient of the polynomial ring in $\dim T$ variables modulo the ideal generated by $W$-invariants of positive degree. 

The cohomology ring of $G/B$ has a nice additive basis formed by \emph{Schubert classes} $\sigma_w$: the classes of Schubert varieties, i.e. of the closures of $B$-orbits in $G/B$. It is indexed by the elements of the Weyl group $w\in W$. A natural question is to construct a system of polynomial representatives of Schubert classes.

This question was answered in the case $G=\GL_n$ by I.\,N.\,Bernstein, I.\,M.\,Gelfand and S.\,I.\,Gelfand \cite{BernsteinGelfandGelfand73} who showed that the polynomials representing classes of Schubert varieties can be obtained from a polynomial representing the top class (the class of a point) by a sequence of \emph{divided difference operators}.  A.\,Lascoux and M.-P.\,Sch\"utzenberger~\cite{LascouxSchutzenberger82} considered one particularly nice choice of the top class and defined a system of polynomials $\Sf_w\in\ZZ[z_1,\dots,z_n]$, the \emph{Schubert polynomials}, with many good combinatorial and geometric properties. Here $w\in\Sc_n$ runs over the group of permutations in $n$ letters, i.e. the Weyl group of $\GL_n$, 

This system of polynomials is \emph{stable} in the following sense: for the embedding $\Sc_n\hookrightarrow \Sc_{n+1}$, the representative $\Sf_w$ of a given permutation $w\in\Sc_n\subset \Sc_{n+1}$ does not depend upon $n$. So the polynomials $\Sf_w$ can be viewed as elements of the ring $\ZZ[z_1,z_2,\dots]$ in countably many variables, indexed by the \emph{finitary} permutations $w\in\Sc_\infty=\varinjlim\Sc_n$. In~\cite{LascouxSchutzenberger82}, it is shown that these polynomials form a $\ZZ$-\emph{basis} of this ring (also cf.~\cite{Macdonald91}).

Geometrically this can be interpreted as follows. The standard embedding $\GL_n\hookrightarrow\GL_{n+1}$ together with the embedding of Borel subgroups $B_n\hookrightarrow B_{n+1}$ defines an embedding of flag varieties $G_n/B_n\hookrightarrow G_{n+1}/B_{n+1}$. This gives a surjective map $H^*(G_{n+1}/B_{n+1},\ZZ)\to H^*(G_n/B_n,\ZZ)$. This map is compatible with the Schubert classes: for each $w\in\Sc_\infty$, there exists a  \emph{stable} Schubert class $\sigma_w^{(\infty)}=\varprojlim \sigma_w^{(n)}\in \varprojlim H^*(G_n/B_n,\ZZ)$. \emph{A priori} this class is a homogeneous power series in $z_1,z_2,\dots$. However it turns out that it can be represented by a unique \emph{polynomial}, which is the Schubert polynomial $\Sf_w$. The elements $\{\Sf_w\}$ are thus obtained as the \emph{unique} solutions of an infinite system of equations involving divided difference operations. 

The polynomials $\Sf_w$ have nonnegative integer coefficients; their nonnegativity is completely unobvious from their definition involving divided difference operators. Their combinatorial description was obtained independently by S.\,Fomin and An.\,Kirillov~\cite{FominKirillov96} and S.\,Billey and N.\,Bergeron~\cite{BilleyBergeron93}. The monomials in $\Sf_w$ are indexed by certain diagrams, called \emph{pipe dreams} or \emph{rc-graphs}, see Sec.~\ref{ssec:pdtypea} below. A geometric interpretation of pipe dreams was obtained by A.\,Knutson and E.\,Miller in~\cite{KnutsonMiller05}: they showed that the pipe dreams for a permutation $w$ are in bijection with the irreducible components of a certain flat Gr\"obner degeneration of the corresponding matrix Schubert variety $\overline{X_w}$.

There is an obvious analogy between Schubert and Schur polynomials: the monomials of the latters are indexed by semistandard Young tableaux. In fact, if $w$ is a Grassmannian permutation, i.e. it has a unique descent, the corresponding Schubert polynomial equals the Schur polynomial $s_\lambda(w)$, where $\lambda(w)$ is a partition obtained from $w$ (see~Sec.\,\ref{ssec:grassmannschur} for details). This equality can be easily obtained from the following geometric argument: the Schur polynomials represent the classes of Schubert varieties in Grassmannians $G/P$, where $P$ is a maximal parabolic group in $G$, and the map $H^*(G/P)\to H^*(G/B)$ sends them to Schubert classes of Grassmannian permutations in a full flag variety. There is also a purely combinatorial proof: one can establish a bijection between the Young tableaux indexing the monomials in $s_{\lambda(w)}$ and the pipe dreams indexing the monomials in $\Sf_w$. We recall this proof in Theorem~\ref{thm:a_grass}; see also~\cite{Knutson12}. 

In~\cite{BilleyBergeron93} the authors introduced the notion of  the \emph{bottom pipe dream} for each Schubert polynomial. It is a maximal pipe dream according to some partial order on pipe dreams, defined in combinatorial terms; such a pipe dream exists and is unique for each permutation. This allowed them to show that the basis change matrix between the Schubert polynomials and the monomial basis in $\ZZ[z_1,z_2,\dots]$ is unitriangular, and hence Schubert polynomials form a $\ZZ$-basis in $\ZZ[z_1,z_2,\dots]$.

The construction of Schubert polynomials can be extended as follows. Instead of the cohomology ring $H^*(G/B,\ZZ)$ we can consider the $T$-equivariant cohomology ring $H^*_T(G/B,\ZZ)$. The map $G/B\to \pt$ defines the module structure on $H^*_T(G/B,\ZZ)$ with respect to the polynomial ring $\ZZ[t_1,\dots,t_n]=H^*_T(\pt)$. One can be interested in polynomial representatives of the $T$-equivariant classes of $[X_w]$. These are also classical objects, known as \emph{double Schubert polynomials} $\Sf_w(z,t)$; these are homogeneous polynomials in $2n$ variables: $z_1,\dots,z_n$ and $t_1,\dots,t_n$. The specialization $t_i=0$ gives us the usual Schubert polynomials  $\Sf_w(x)$ in $n$ variables $z_1,\dots,z_n$. Double Schubert polynomials also have a description in terms of pipe dreams.

\subsection{The case of symplectic and orthogonal groups}

It is interesting to replace $\GL_n$ by another reductive complex algebraic group and ask the same series of questions. We will be interested in the classical groups of types $B_n$, $C_n$, and $D_n$: these are $\SO_{2n+1}$, $\Sp_{2n}$, and $\SO_{2n}$, respectively.  The Weyl group $W$ for such a group is a hyperoctahedral group, or the group of signed permutations on $n$ letters, for the types $B$ and $C$, or, in the type $D$, the subgroup of this group consisting only of signed permutations with even number of sign changes.

The (generalized) full flag variety $G/B$ also has a cellular decomposition $G/B=\bigsqcup_{w\in W} BwB/B$, with the cells indexed by the elements of the Weyl group $W$; and the cell closures $X_w=\overline{BwB/B}$ define a basis $\sigma_w\in H^*(G/B,\ZZ)$. 

As in the case of $\GL_n$, one can consider the embeddings $G_n\hookrightarrow G_{n+1}$, where $G_n$ and $G_{n+1}$ are classical groups of the same type ($B$, $C$ or $D$) and of the rank $n$ and $n+1$, respectively. We can fix an element $w\in W_\infty=\bigcup W_n$ of the ``limit Weyl group'' and take the projective limit of Schubert cycles $\sigma^{(\infty)}_w\in\varprojlim H^*(G_n/B_n,\ZZ)$. The question is as follows: what is a ``nice'' representative of this class in the ring of power series $\ZZ[[z_1,z_2,\dots]]$?

In~\cite{BilleyHaiman95} S.\,Billey and M.\,Haiman define \emph{Schubert polynomials for the classical groups}. They show that $\sigma^{\infty}_w$ is represented by a unique element $\Ff_w$ of the subring $\ZZ[z_1,z_2,\dots,p_1(\zz),p_3(\zz),\dots]$, where $p_{k}(\zz)=\sum_{i=1}^\infty z_i^k$ are the Newton power sums.  Note that we need to take only the odd power sums; the images of the even power sums vanish in each of the coinvariant rings for each of the types $B$, $C$, and $D$. Similarly to $\Sf_w$, the functions $\Ff_w$ are obtained as unique solutions of an infinite system of equations involving divided difference operators.  They also form a basis of the corresponding ring. 

Schubert polynomials for the classical groups can be also expressed via the usual Schubert polynomials and the \emph{Stanley symmetric functions}. This implies that they are nonnegative integer combinations of monomials in $z_1,z_2,\dots$ and $p_1,p_3,\dots$.

Like in the previous case, we can consider the Schubert classes coming from cycles on Lagrangian/orthogonal Grassmannians. The results of P.\,Pragacz~\cite{Pragacz91} imply that the corresponding Schubert polynomials are equal to Schur's $P$- and $Q$-functions; the details follow in~Sec.~\ref{ssec:grassmannpq}. 

One also can introduce double versions of Schubert polynomials, which represent the classes of $[X_w]$ in the $T$-equivariant cohomology ring $H^*_T(G/B,\ZZ)$, where $T\cong(\CC^*)^n$ is a maximal torus in $G$. They depend upon $z_1,z_2,\dots$, $p_1,p_3,\dots$, and another set of variables $t_1,t_2,\dots$ Formulas for double Schubert polynomials $\Ff_w(\zz,\pp,\ttt)$ were found by T.\,Ikeda, L.\,Mihalcea, and H.\,Naruse in~\cite{IkedaMihalceaNaruse11}.

Finally, in the paper~\cite{KirillovNaruse17}, An.\,Kirillov and H.\,Naruse provide a combinatorial construction of Schubert polynomials for classical Weyl groups outside the type $A$. They define analogues of pipe dreams whose generating functions are equal to (ordinary or equivariant) Schubert polynomials.  



Let us also mention an earlier paper~\cite{FominKirillov96b} by S.\,Fomin and An.\,Kirillov, where the authors were constructing Schubert polynomials of the type $B$ in a completely different way: they were looking for a family of polynomials indexed by the hyperoctahedral group and satisfying certain five properties, similar to those of Schubert polynomials in the type $A$ (stability, nonnegativity of coefficients etc.). They have shown that such a family of polynomials does not exist; then they considered families of polynomials defined by all but one of these properties. This led them to several different families of polynomials. We should note that they are also different from the Schubert polynomials of type $B$ defined by Billey and Haiman and considered in this paper.

\subsection{Our results} This paper is devoted to combinatorial study of Schubert polynomials for groups of the types $B$, $C$, and $D$. We provide an alternative construction of pipe dreams for these cases; it is given in~Sec.\,\ref{sec:main}. 
The pipe dreams are configurations of strands, similar to those in the type $A$; some of the strands may be equipped with an additional element, called \emph{faucet}, which represents the sign change. To such a pipe dream we can associate a permutation $w\in W$, called the \emph{shape} of pipe dream, and a monomial; our main results, Theorems~\ref{bdreams}, \ref{cdreams} and~\ref{ddreams}, state that the Schubert polynomial $\Ff_w(\zz,\pp)$ equals the sum of monomials for all pipe dreams of shape $w\in W$ of type $B$, $C$ or $D$, respectively. We also give a similar description of double Schubert polynomials of these types in Corollary~\ref{cor:double}.

Our construction of pipe dreams is similar to the one by Kirillov and Naruse; the main difference is that in their setting each pipe dream corresponds to a couple of terms in the Schubert polynomial (namely, to the product of several binomials), as opposed to one monomial in our setting. We provide a detailed comparison of these two constructions in Section~\ref{sec:kirnar}.

Then we introduce a notion of admissible moves on pipe dreams of a given type; these are certain transformations not changing the shape of a pipe dream, that turn it into a poset. We show that each pipe dream for a permutation of each of the types $B$--$D$ can be reduced to a certain canonical form, referred to as the bottom pipe dream, by a sequence of admissible moves, similar to the ladder moves in the type $A$; thus the poset of pipe dreams corresponding to a given Weyl group element is shown to have a unique minimal element. The existence of bottom pipe dreams is established in Theorems~\ref{bbottom},~\ref{cbottom}, and~\ref{dbottom}. This allows us to give a new proof of Theorem~\ref{thm:BCDbasis}, which states that the Schubert polynomials (of each type) form a basis of the ring $\QQ[\zz,p_1,p_3,\dots]$.

In the last section we study Grassmannian permutations; we give a bijective proof of the fact that the Schubert polynomial of such a permutation is Schur's $P$- or $Q$-function (Theorem~\ref{PQschub}). For this we recall the definition of the latter functions involving shifted Young tableaux and establish a bijection between these tableaux and the pipe dreams of the corresponding Grassmannian permutation.

\subsection{Related questions: bumpless pipe dreams and specializations of Schubert polynomials} Recently, T.\,Lam, S.\,J.\,Lee, and M.\,Shimozono~\cite{LamLeeShimozono21} introduced objects called \emph{bumpless pipe dreams}. They also can be used to obtain a formula for double Schubert polynomials; however, there is no weight-preserving bijection between them and the usual pipe dreams, so the two presentations are genuinely different. 

In her recent preprint~\cite{Weigandt21}, A.\,Weigandt defines a generalization of bumpless pipe dreams, which provide a similar description of Grothendieck polynomials, and shows that they correspond to alternating sign matrices. It would be interesting to find analogues of these results, as well as the notion of bumpless pipe dreams, for the groups of classical types outside the type $A$.

Another series of questions concerns specializations of Schubert polynomials. In~\cite{Macdonald91}, I.\,Macdonald conjectured a formula for the principal specialization of a Schubert polynomial; this formula was proven algebraically by S.\,Fomin and R.\,Stanley~\cite{FominStanley91}. Recently, a combinatorial proof was found by S.\,Billey, A.\,Holroyd, and B.\,Young~\cite{BilleyHolroydYoung19}. Taking an appropriate limit of Schubert polynomials gives the so-called \emph{backstable Schubert polynomials}, also defined in the aforementioned paper~\cite{LamLeeShimozono21}. Similarly to Schubert polynomials of types $B$, $C$ and $D$, they depend upon two sets of variables, being polynomials in the first set and symmetric functions in the second set of variables. They also admit a similar expression for their principal specializations. E.\,Marberg and B.\,Pawlowski~\cite{MarbergPawlowski21} prove analogues of Macdonald's formula for Schubert polynomials of types $B$, $C$, and $D$. A natural (however challenging) question is to find a combinatorial proof of their results by means of pipe dreams, either those defined in this paper or the bumpless ones.

\subsection{Structure of the paper} This text is organized as follows. In Sec.~\ref{sec:prelim} we recall the main definitions and notions concerning the Weyl groups of classical types, the Schubert polynomials for these groups, as well as the construction of pipe dreams in the type $A$. In Sec.~\ref{sec:main} we describe the constructions of pipe dreams for Schubert polynomials of the types $B$, $C$, and $D$ (Sec.~\ref{ssec:mainc}--\ref{ssec:infinite}). In~Sec.~\ref{ssec:double} we are dealing with the double Schubert polynomials; we provide a generalization of the previous construction to this case. Finally, in~Sec.~\ref{ssec:examples} we compute several examples of Schubert polynomials. Sec.~\ref{sec:bottom} is devoted to the proof of existence of bottom pipe dreams. Sec.~\ref{sec:kirnar} is devoted to comparing our construction with the ``excited extended Young diagrams" by Kirillov and Naruse: we recall their construction in Sec.~\ref{ssec:kirnardef} and relate it to the ours in~Sec.~\ref{ssec:kirnarbijection}. In~Sec.~\ref{ssec:lehmer} we introduce the admissible moves, and in Sec.~\ref{ssec:bottoma} we recall the situation in the type $A$. The next three subsections are devoted to the cases $B$, $C$, and $D$ respectively. The last section of this paper,  Sec.~\ref{sec:grassmann}, is devoted to Grassmannian permutations; we show that the Schubert polynomial of such a permutation is a $P$- or $Q$-Schur function. In the appendix we provide more examples: namely, we list all the elements of the group $\BCc_2$ and some elements of the group $\Dc_3$ and for each of these elements list all its pipe dreams.

\subsection*{Acknowledgements} We are grateful to the anonymous referees for their useful comments and remarks, in particular, for bringing the paper~\cite{KirillovNaruse17} to our attention. This research was supported by the HSE University Basic Research Program and the Theoretical Physics and Mathematics Advancement Foundation ``BASIS''. E.S. was also partially supported by the RFBR grant 20-01-00091-a and the Simons-IUM Fellowship.

\section{Preliminaries}\label{sec:prelim}

\subsection{Weyl groups of the classical types}\label{ssec:weylgroups}

Let $\QQ[\zz]=\QQ[z_1,z_2,\dots]$ be the ring of polynomials in countably many variables $z_1,z_2,\dots$. Let $p_k=z_1^k+z_2^k+\dots$ denote the $k$-th power sum; this is not a polynomial, but rather a symmetric function in the $z_i$'s. Consider the ring of power series $\QQ[\zz,p_1,p_3,\dots]=\QQ[z_1,z_2,\dots,p_1,p_3,\dots]$ which are polynomial in the $z_i$ and the $p_k$ for $k$ odd. All its generators are algebraically independent, so this ring can be viewed just as the polynomial ring in $z_1,z_2,\dots$ and $p_1,p_3,\dots$.

Denote by $\Sc_n$ the symmetric group in $n$ variables, and let $\Sc_\infty=\varinjlim\Sc_n$ be the group of finitary permutations of $\ZZ_{>0}$, i.e. the group of permutations fixing all but finitely many points. It is generated by the simple transpositions $s_i=(i\leftrightarrow i+1)$. The group $S_\infty$ acts on $\QQ[\zz,\dots,p_1,p_3,\dots]$ by permuting the $z_i$'s. 

Let us also consider the  group of signed permutations $\BCc_n$ (also called the \emph{hyperoctahedral group}). It can be viewed as the group of permutations $w$ of the $2n$-element set $\{1,-1, \dots,n,-n\}$ satisfying the condition $w(i)=j$ iff $w(-i)=-j$. The groups $\BCc_n$ are embedded one into another in the obvious way, so we can consider the injective limit $\BCc_\infty$, which is the group of finitary signed permutations. The standard generators for $\BCc_\infty$ are $s_i=(i\leftrightarrow i+1)$ for $i\geq 1$ and $s_0=(1\leftrightarrow -1)$. The group $\BCc_\infty$ acts on the formal power series ring $\QQ[[z_1,z_2,\dots]]$ by letting $s_i$ interchange $z_i$ and $z_{i+1}$ for $i\geq 1$, and by letting $s_0$ send $z_1$ into $-z_1$. We can restrict this action to the rings $\QQ[\zz]$ and $\QQ[\zz,p_1,p_3,\dots]$ and see that the $s_i$ for $i\geq 1$ fix the $p_k$, while $s_0p_k=p_k-2z_1^k$. 

We denote by $\Dc_n$ the subgroup of $\BCc_n$ of signed permutations with an even number of sign changes. The union of these subgroups is denoted by $\Dc_\infty$. The standard generators for these groups are $s_i$ for $i\geq 1$ and an additional generator $s_{\hat 1}=s_0s_1s_0$ which replaces $z_1$ with $-z_2$ and $z_2$ with $-z_1$.

These groups satisfy the relations $s_i^2=\id$ and the \emph{Coxeter relations} (cf., for instance,~\cite{Bourbaki46} or~\cite{Humphreys90}):

\begin{description}
	\item[$\Sc_\infty$] $s_is_j=s_js_i$ for $|i-j|\geq 2$, $s_is_{i+1}s_i=s_{i+1}s_is_{i+1}$ for $i\geq 1$;
	
	\item[$\BCc_\infty$]  $s_is_j=s_js_i$ for $|i-j|\geq 2$, $s_is_{i+1}s_i=s_{i+1}s_is_{i+1}$ for $i\geq 1$, $s_0s_1s_0s_1=s_1s_0s_1s_0$.

	\item[$\Dc_\infty$]  $s_is_j=s_js_i$ for $|i-j|\geq 2$, $s_is_{i+1}s_i=s_{i+1}s_is_{i+1}$ for $i\geq 1$ (here $i,j\in\{\hat 1,1,2,\dots\}$; while performing arithmetic operations we treat $\hat 1$ as $1$).
\end{description}

We will use the standard terminology for Coxeter groups. Let $\Fc_\infty$ be one of the groups $\Sc_\infty$, $\BCc_\infty$, or $\Dc_\infty$. A \emph{word} $(s_{i_1},\dots,s_{i_k})$ is a finite sequence of its generators. We say that this word \emph{represents} an element $w\in\Fc_\infty$, if $w=s_{i_1}\dots s_{i_k}$. The minimal number $k$ such that there exists a $k$-element word representing $w$ is called the \emph{length} of $w$ and denoted by $k=\ell(w)$.

Often we will use the one-line notation (not to be confused with the cycle notation) for elements $w\in\Sc_n$, writing a permutation $w$ as a sequence $w(1)w(2)\dots w(n)$. For instance, $4132\in\Sc_4$ maps $1$ into $4$, $2$ into $1$, $3$ into itself and $4$ into $2$.

For signed permutations $w\in\BCc_n$ we will write negative numbers $-m$ as $\overline m$ in one-line notation. For example, $3\bar 2\bar 4 1\in \BCc_4$ maps $1$ into $3$, $2$ into $-2$, $3$ into $-4$, and $4$ into $1$.

\subsection{Schubert polynomials for the classical groups}\label{ssec:schubclass}

Let us recall the definition of Schubert polynomials from \cite{LascouxSchutzenberger82} and the definition of Schubert polynomials for the classical groups from \cite{BilleyHaiman95}.

\begin{definition} Define the \emph{divided difference operators} on the rings $\QQ[\zz]$ and $\QQ[\zz,p_1,p_3,\dots]$ as follows:
\begin{eqnarray*}
	\partial_{i} f=\frac{f-s_i f}{z_{i}-z_{i+1}}&  \text{for} &i\geq 1;\\
	\partial_{0} f=\frac{f-s_0 f}{-2 z_{1}};\\
	\partial_{0}^{B} f=2\partial_0 f=\frac{f-s_0 f}{-z_{1}};\\
	\partial_{\hat 1} f=\frac{f-s_{\hat 1} f}{-z_{1}-z_{2}}.
\end{eqnarray*}
\end{definition}

\begin{definition}
	\emph{Schubert polynomials} (of type $A$) are homogeneous polynomials $\Sf_w(\zz)\in\QQ[\zz]$ indexed by the permutations $w\in\Sc_\infty$ and satisfying the relations
\begin{eqnarray*}
	\Sf_{\id}&=&1;\\
	\partial_i\Sf_w&=&\begin{cases} \Sf_{ws_i}&\text{if}\quad \ell(ws_i)<\ell(w);\\
	0&\text{otherwise}
	\end{cases}
\end{eqnarray*}
for each $i\geq 1$.
\end{definition}

\begin{definition}
	\emph{Schubert polynomials of type $C$} are elements $\Cf_w(\zz,\pp)\in\QQ[\zz,p_1,p_3,\dots]$ that are homogeneous in $\zz$, indexed by the permutations $w\in\BCc_\infty$ and satisfy the relations
\begin{eqnarray}
	\Cf_{\id}&=&1;\nonumber\\
	\partial_i\Cf_w&=&\begin{cases} \Cf_{ws_i}&\text{if}\quad \ell(ws_i)<\ell(w);\\
	0&\text{otherwise}\label{eq:deftypeC}
	\end{cases}
\end{eqnarray}
for each $i\geq 0$.
\end{definition}

\begin{definition}\emph{Schubert polynomials of type $B$} are defined as $\Bf_w=2^{-s(w)}\Cf_w$, where $s(w)$ is the number of entries from $\{1,2,\dots\}$ changing their sign under the action of $w$. They also can be defined by the relations obtained from~(\ref{eq:deftypeC}) by replacing $\partial_0$ by $\partial_0^B$. 
\end{definition}

\begin{definition} \emph{Schubert polynomials of type $D$}
 are elements $\Df_w(\zz,\pp)\in\QQ[\zz,p_1,p_3,\dots]$ that are homogeneous in $\zz$,  indexed by the permutations $w\in\Dc_\infty$ and satisfy the relations
\begin{eqnarray*}
	\Df_{\id}&=&1;\\
	\partial_i\Df_w&=&\begin{cases} \Df_{ws_i}&\text{if}\quad \ell(ws_i)<\ell(w);\\
	0&\text{otherwise}
	\end{cases}
\end{eqnarray*}
for each $i=\hat 1, 1,2,\dots$.
\end{definition}

It was shown by S.\,Billey and M.\,Haiman~\cite{BilleyHaiman95} that such polynomials exist and are uniquely determined by these properties. We recall these results below as~Theorems~\ref{thm:sschub}--\ref{thm:dschub}.

For convenience we replace the ring $\QQ[\zz,p_1(\zz),p_3(\zz),\ldots]$ by an isomorphic ring $\QQ[\zz,p_1(\xx),p_3(\xx),\ldots]$. Here $p_k(\xx)=\sum_{i=1}^\infty x_i^k$, and the isomorphism is the unique ring isomorphism with $z_i\mapsto z_i$ and  $p_k(\xx)\mapsto -p_k(\zz)/2$. Then the Schubert polynomials of types $B$, $C$, and $D$ are polynomials in $z_1,z_2,\ldots$ and symmetric functions in $x_1,x_2,\ldots$. 

It is easy to see that the generators of the groups $\BCc_\infty$ and $\Dc_\infty$ act on the ring $\QQ[\zz,p_1(\xx),p_3(\xx),\ldots]$ in the following way:
\begin{eqnarray*}
	s_if(z_1,\ldots,z_i,z_{i+1},\ldots;x_1,x_2,\ldots)&=&f(z_1,\ldots,z_{i+1},z_i,\ldots;x_1,x_2,\ldots)\quad\text{for $i\geq 1$;}\\
	s_0f(z_1,z_2,z_3,\ldots;x_1,x_2,\ldots)&=&f(-z_1,z_2,z_3\ldots;z_1,x_1,x_2,\ldots);\\
	s_{\hat 1}f(z_1,z_2,z_3\ldots;x_1,x_2,\ldots)&=&f(-z_2,-z_1,z_3\ldots;z_1,z_2,x_1,x_2,\ldots).
\end{eqnarray*}

\subsection{Stanley symmetric functions and Schubert polynomials}\label{ssec:stanley}

In this subsection we discuss the expression of Schubert polynomials via Stanley symmetric functions. We start with introducing some notation.

	\begin{itemize}
		\item Let $w$ be an element of $\Sc_\infty$, $\BCc_\infty$  or $\Dc_\infty$. Denote the set of all reduced words for $w$ by $R(w)$.
		\item 	Let $\aaa=s_{a_1}s_{a_2}\ldots s_{a_l}\in R(w)$ be a reduced word for $w\in \BCc_\infty$ or $w\in\Dc_\infty$. We define the set of \emph{peaks} of $\aaa$ as the set
\[
P(\aaa)=\{i \in\{2,3,\ldots, l-1\}\mid a_{i-1}<a_{i}>a_{i+1}\}.
\]
For the group  $\Dc_\infty$ we assume that $\hat 1=1<2<3<\ldots$
		\item A weakly decreasing sequence of nonnegative integers  $j_1\geq j_2\geq \ldots \geq j_l$ is said to be \emph{$x$-admissible} for the word $\aaa$, if $j_{i-1}=j_{i}=j_{i+1}$ implies that $i\not\in P(\aaa)$. 

		To each $x$-admissible sequence $j_1\geq j_2\geq \ldots \geq j_l$ we can assign a monomial $x_{j_1}x_{j_2}\ldots x_{j_l}=x_1^{\alpha_1}\ldots x_m^{\alpha_m}=\xx^{\alpha}$. Such monomials are also called \emph{$x$-admissible}. We denote the set of all $x$-admissible monomials for $\aaa$ by $\Ar_{x}(\aaa)$.
		\item 	Let $w\in \Sc_n$ and let $\aaa=s_{a_1}s_{a_2}\ldots s_{a_l}\in R(w)$ be a reduced word for $w$. A weakly increasing sequence of positive integers  $j_1\leq j_2\leq\ldots \leq j_l$ is said to be  \emph{$z$-admissible} for  $\aaa$ if for each $i$ we have $j_i\leq a_i$, and the equality $j_i=j_{i+1}$ implies that $a_i>a_{i+1}$.
		
		To each $z$-admissible sequence $j_1\leq j_2\leq \ldots \leq j_l$ we can assign a monomial $z_{j_1}z_{j_2}\ldots z_{j_l}=z_1^{\beta_1}\ldots z_m^{\beta_m}=\zz^{\beta}$. Such monomials are also called \emph{$z$-admissible}; we denote the set of $z$-admissible monomials for $\aaa$ by $\Ar_{z}(\aaa)$.
		
		\item Denote by $i(\alpha)$ the number of distinct variables occuring in  $\xx^{\alpha}$ in nonzero powers.		
		\item For the group $\Dc_\infty$ we denote by $o(\aaa)$ the total number of occurences of the letters  $s_1$ and $s_{\hat 1}$ in $\aaa$.
	\end{itemize}

\begin{definition}
The \emph{Stanley symmetric functions} are defined as follows. For $w\in\BCc_\infty$ we define 
	$$
	F_{w}(\xx)=\sum_{\substack{\aaa\in R(w) \\ \xx^{\alpha} \in \Ar_{x}(\aaa)}} 2^{i(\alpha)} \xx^{\alpha}.
	$$
For $w\in\Dc_\infty$ we define
	$$
	E_{w}(\xx)=\sum_{\substack{\aaa \in R(w) \\ \xx^{\alpha} \in \Ar_{x}(\aaa)}} 2^{i(\alpha)-o(\aaa)} \xx^{\alpha}.
	$$
\end{definition}

The following theorem, describing the relation of Schubert polynomials and Stanley symmetric functions, is due to S.\,Fomin and R.\,Stanley~\cite{FominStanley91} and S.\,Billey, W.\,Jockush and R.\,Stanley~\cite{BilleyJockushStanley93}.

	\begin{theorem}[{\cite[Thm.~1.1]{BilleyJockushStanley93}}]\label{thm:sschub}
		For each $w\in \Sc_\infty$ we have
		$$
		\Sf_{w}\left(\zz\right)=\sum_{\substack{\aaa \in R(w)\\ \zz^{\beta} \in \Ar_{z}(\aaa)}} \zz^{\beta}.
		$$
	\end{theorem}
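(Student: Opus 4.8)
The statement to prove is Theorem~\ref{thm:sschub}, the combinatorial formula $\Sf_w(\zz)=\sum \zz^\beta$ over $z$-admissible monomials for reduced words of $w\in\Sc_\infty$. My plan is to verify that the right-hand side satisfies the defining recursion for Schubert polynomials, namely $\Sf_{\id}=1$ and $\partial_i\Sf_w=\Sf_{ws_i}$ when $\ell(ws_i)<\ell(w)$ and $0$ otherwise. Since Billey--Haiman (recalled earlier) guarantee uniqueness of polynomials satisfying these relations, this suffices. Denote by $G_w(\zz)$ the proposed right-hand side. The base case $G_{\id}=1$ is immediate: the only reduced word for the identity is the empty word, contributing the empty monomial $1$.

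The core of the argument is the divided-difference step. First I would record the homogeneity and the fact that $G_w$ is a genuine polynomial: each $z$-admissible sequence for a word of length $\ell(w)$ has exactly $\ell(w)$ entries, so every monomial has degree $\ell(w)$; and the condition $j_i\leq a_i$ bounds the variables appearing, so $G_w\in\QQ[z_1,\dots,z_N]$ for $N=\max_i a_i$ over reduced words — in fact one checks this stabilizes. The key identity is a ``transition'' or recursive description of $z$-admissible fillings under appending a generator. Concretely, I would analyze how reduced words and their $z$-admissible monomials behave with respect to right multiplication by $s_i$. When $\ell(ws_i)<\ell(w)$, every reduced word $\aaa$ for $w$ with last letter $s_i$ restricts to a reduced word $\aaa'$ for $ws_i$, and the $z$-admissible monomials for $\aaa$ are obtained from those for $\aaa'$ by appending $z_j$ with $j\leq i$ subject to the descent condition. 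The effect of $\partial_i$ on such a sum of monomials — using that $\partial_i(z_i^a z_{i+1}^b\cdot(\text{$s_i$-invariant}))$ telescopes — should exactly peel off the last factor and reproduce $G_{ws_i}$; meanwhile reduced words for $w$ not ending in $s_i$ can be rearranged (via the exchange condition) so that their contributions either pair up and cancel under $\partial_i$ or are already $s_i$-symmetric and killed by $\partial_i$. When $\ell(ws_i)>\ell(w)$, one shows $G_w$ is symmetric in $z_i,z_{i+1}$, hence $\partial_i G_w=0$: here I would use that if $\aaa$ is a reduced word for $w$ with $\ell(ws_i)>\ell(w)$ then the multiset of $z$-admissible monomials is stable under swapping $z_i\leftrightarrow z_{i+1}$, by an involution on admissible sequences matching the one on $\zz$.

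The main obstacle, and the part requiring genuine care, is the bookkeeping in the ``descending'' case: controlling the interaction between the choice of reduced word and the $z$-admissibility constraint $j_i=j_{i+1}\Rightarrow a_i>a_{i+1}$, so that the $\partial_i$-computation does not overcount or undercount. The cleanest route is probably to follow \cite{BilleyJockushStanley93} directly: fix $i$, partition the reduced words of $w$ according to the position and multiplicity of the letter $s_i$, and use a Hecke-algebra-style insertion (compatible with the nil-Coxeter relations) to set up a bijection between $z$-admissible fillings of $w$ that ``use column $i$'' and those of $ws_i$ — this is where the condition $j_i\le a_i$ is essential and where the argument is most delicate. An alternative, which I would mention as a sanity check, is to instead verify the formula against the known pipe-dream description of \cite{FominKirillov96, BilleyBergeron93} recalled in Section~\ref{ssec:pdtypea}: a $z$-admissible filling of a reduced word is essentially a reading of a pipe dream, and the bijection between the two is length- and weight-preserving, so the theorem follows from the pipe-dream formula. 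Either way, the structural skeleton — ``check the divided-difference recursion, invoke uniqueness'' — is the same; only the combinatorial heart differs.
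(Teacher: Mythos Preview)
The paper does not actually give a proof of Theorem~\ref{thm:sschub}: it is stated with a citation to \cite[Thm.~1.1]{BilleyJockushStanley93} and used as a known input. In Section~\ref{ssec:pdtypea} the paper only explains why the pipe-dream formula (Theorem~\ref{sdreams}) is a restatement of Theorem~\ref{thm:sschub}, by exhibiting the bijection between reduced pipe dreams of shape $w$ and pairs $(\aaa,\zz^\beta)$ with $\aaa\in R(w)$ and $\zz^\beta\in\Ar_z(\aaa)$; it does not supply an independent proof of either formulation.

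So there is nothing to compare your proposal against in this paper. Your outline is a reasonable sketch of the standard argument, and your ``alternative'' is exactly the equivalence the paper spells out between Theorems~\ref{thm:sschub} and~\ref{sdreams}. If you want to turn the divided-difference route into an actual proof, the part you flag as delicate really is: the naive attempt to peel off the last letter fails because not every reduced word for $w$ ends in $s_i$, and the symmetry/cancellation for the remaining words is where the original \cite{BilleyJockushStanley93} argument (or the Fomin--Stanley nil-Coxeter approach) does real work. Your sketch does not yet contain that mechanism, only a pointer to it.
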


Its analogues for the cases of $\BCc_\infty$ and $\Dc_\infty$ were obtained by S.\,Billey and M.\,Haiman~\cite{BilleyHaiman95}.
	
	\begin{theorem}[{\cite[Thm~3A]{BilleyHaiman95}}]\label{thm:cschub} Let $w\in\BCc_\infty$. 
The Schubert polynomial $\Cf_w$ satisfies the equality		
\[
	\Cf_{w}(\zz; \xx)=\sum_{\substack{u v=w \\ {\ell(u)+\ell(v)=\ell(w)} \\ v \in \Sc_{n}}} F_{u}(\xx) \Sf_{v}(\zz)=\sum_{\substack{u v=w\\ \ell(u)+\ell(v)=\ell(w) \\ v \in \Sc_{n}}} \sum_{\substack{\aaa \in R(u)\\ \xx^{\alpha} \in \Ar_{x}(\mathbf{a})}} \sum_{\substack{\bb \in R(v)\\ \zz^{\beta} \in \Ar_{z}(\bb)}} 2^{i(\alpha)} \xx^{\alpha} \zz^{\beta}.
\]	
\end{theorem}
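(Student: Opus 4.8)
The plan is to deduce Theorem~\ref{thm:cschub} from the three facts already available in the excerpt: (i) the uniqueness of the polynomials $\Cf_w$ as the solution of the system $\Cf_{\id}=1$, $\partial_i\Cf_w=\Cf_{ws_i}$ or $0$ for $i\geq 0$; (ii) Theorem~\ref{thm:sschub}, which identifies $\Sf_v(\zz)$ with the generating function of $z$-admissible monomials over reduced words of $v\in\Sc_\infty$; and (iii) the fact (Billey--Haiman) that the Stanley symmetric functions $F_u(\xx)$ are well-defined symmetric functions. So I would set $\widetilde{\Cf}_w(\zz;\xx):=\sum_{uv=w,\ \ell(u)+\ell(v)=\ell(w),\ v\in\Sc_\infty} F_u(\xx)\,\Sf_v(\zz)$ and show $\widetilde{\Cf}_w$ satisfies the defining recursion, hence equals $\Cf_w$. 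The second equality in the statement is then just the expansion obtained by plugging in the combinatorial formulas for $F_u$ (from the Definition of Stanley symmetric functions) and for $\Sf_v$ (Theorem~\ref{thm:sschub}), which is purely formal once the first equality is established.

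First I would check the base case: the only way to factor $w=\id$ as $uv$ with lengths adding up is $u=v=\id$, and $F_{\id}=\Sf_{\id}=1$, so $\widetilde{\Cf}_{\id}=1$. Next, for $i\geq 1$ the operator $\partial_i$ acts only on the $\zz$-variables and fixes the $\xx$-variables (equivalently fixes the $p_k$'s), so $\partial_i\widetilde{\Cf}_w=\sum_{uv=w}F_u(\xx)\,\partial_i\Sf_v(\zz)$. Here I would invoke Lascoux--Schützenberger/BGG theory in type $A$: $\partial_i\Sf_v=\Sf_{vs_i}$ if $\ell(vs_i)<\ell(v)$ and $0$ otherwise, together with the combinatorial bookkeeping that a factorization $uv=w$ with $\ell(u)+\ell(v)=\ell(w)$ and a descent of $v$ at position $i$ corresponds bijectively to a factorization $u v' = w s_i$ with $v'=vs_i\in\Sc_\infty$ and $\ell(u)+\ell(v')=\ell(ws_i)$ — this works precisely when $\ell(ws_i)<\ell(w)$ (for $i\geq 1$ the relevant descent can always be pushed into the $\Sc_\infty$-factor $v$, since $s_i\in\Sc_\infty$). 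This gives $\partial_i\widetilde{\Cf}_w=\widetilde{\Cf}_{ws_i}$ or $0$ as required, for $i\geq 1$.

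The main obstacle is the case $i=0$, because $s_0$ does not lie in $\Sc_\infty$, so right-multiplication by $s_0$ changes the "$\BCc$-part" $u$ rather than the "$\Sc$-part" $v$, and moreover $\partial_0$ acts nontrivially on the $\xx$-variables via $s_0 p_k = p_k - 2z_1^k$ (equivalently $s_0 f(\zz;\xx)=f(-z_1,z_2,\dots;z_1,x_1,x_2,\dots)$). The key input here is a "transition"/Leibniz-type identity for $\partial_0$ applied to a product $F_u(\xx)\Sf_v(\zz)$: one must show that $\partial_0\widetilde{\Cf}_w$ re-sums to $\widetilde{\Cf}_{ws_0}$ when $\ell(ws_0)<\ell(w)$ and to $0$ otherwise. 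Concretely I expect to use the $s_0$-equivariance of the map $\zz,p_k(\xx)\mapsto$ shifted variables to rewrite $\partial_0(F_u\,\Sf_v)$ and then re-index reduced words: a reduced word for $w$ ending in $s_0$, after deleting the trailing $s_0$, gives a reduced word for $ws_0$, and the peak/$x$-admissibility conditions and the $2^{i(\alpha)}$-weights must be tracked through this deletion. This is exactly the type of verification Billey and Haiman carry out, and I would model the argument on the type-$A$ proof of Theorem~\ref{thm:sschub} by Billey--Jockush--Stanley, localized at the single variable $z_1$ and the single new $\xx$-slot that $s_0$ introduces. Once $\partial_0\widetilde{\Cf}_w$ is shown to obey the recursion, uniqueness of $\Cf_w$ forces $\widetilde{\Cf}_w=\Cf_w$, and expanding $F_u$ and $\Sf_v$ by their definitions yields the displayed triple sum, completing the proof.
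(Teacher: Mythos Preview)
The paper does not give its own proof of this statement: Theorem~\ref{thm:cschub} is simply quoted from \cite[Thm~3A]{BilleyHaiman95} and used as input for the pipe-dream constructions in Section~\ref{sec:main}. So there is no proof in the paper to compare your proposal against.

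That said, your outline is the natural strategy and is essentially the one Billey and Haiman themselves follow: define the right-hand side as $\widetilde{\Cf}_w$ and verify the divided-difference recursion, then invoke uniqueness. Your treatment of $\partial_i$ for $i\geq 1$ is fine. However, the $\partial_0$ step is precisely where all the work lies, and your proposal does not actually carry it out; you correctly identify that $s_0$ moves a $z_1$ into the $\xx$-alphabet and that one must track peaks, $x$-admissibility, and the $2^{i(\alpha)}$ weight through the deletion of a trailing $s_0$, but you then defer to ``exactly the type of verification Billey and Haiman carry out''. That is the whole content of the theorem. If you want a self-contained proof you will need to make this step explicit: for instance, show directly that $F_u(z_1,x_1,x_2,\dots)=\sum_{u'v'=u,\ v'\in\Sc_\infty} F_{u'}(\xx)\Sf_{v'}(z_1)$ (a one-variable ``branching'' for the type-$C$ Stanley function), and combine it with the known $\partial_0$-action on $\Sf_{v'}(z_1)\Sf_v(\zz)$ after merging the $z_1$ into $\zz$. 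Without that computation, your proposal is a correct plan but not yet a proof.
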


	\begin{theorem}[{\cite[Thm~4A]{BilleyHaiman95}}]\label{thm:dschub} Let $w\in\Dc_\infty$. 
The Schubert polynomial $\Df_w$ satisfies the equality
\[
\Df_{w}(\zz; \xx)=\sum_{\substack{u v=w \\ {\ell(u)+\ell(v)=\ell(w)} \\ v \in \Sc_{n}}} E_{u}(\xx) \Sf_{v}(\zz)
	=\sum_{\substack{u v=w\\ \ell(u)+\ell(v)=\ell(w) \\ v \in \Sc_{n}}} \sum_{\substack{\aaa \in R(u)\\ \xx^{\alpha} \in \Ar_{x}(\mathbf{a})}} \sum_{\substack{\bb \in R(v)\\ \zz^{\beta} \in \Ar_{z}(\bb)}} 2^{i(\alpha)-o(\aaa)} \xx^{\alpha} \zz^{\beta}.
\]
\end{theorem}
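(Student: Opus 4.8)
The plan is to show that the first sum in the statement satisfies the properties that uniquely characterize $\Df_w$ — namely $\Df_{\id}=1$, homogeneity, and the recursion $\partial_i\Df_w=\Df_{ws_i}$ when $\ell(ws_i)<\ell(w)$ and $\partial_i\Df_w=0$ otherwise, for all $i\in\{\hat1,1,2,\dots\}$ — and then to invoke the Billey--Haiman uniqueness recalled above. So set $G_w:=\sum E_u(\xx)\,\Sf_v(\zz)$, the sum over all factorizations $w=uv$ with $v\in\Sc_\infty$ and $\ell(u)+\ell(v)=\ell(w)$. The second equality in the theorem is then pure bookkeeping: substitute the combinatorial definition of $E_u$ and the expression for $\Sf_v$ from Theorem~\ref{thm:sschub}, and note that concatenating a reduced word of $u$ with one of $v$ yields a reduced word of $w$, while conversely a reduced word $\cc$ of $w$ together with a cut point $k$ whose suffix $(c_{k+1},\dots,c_{\ell(w)})$ avoids the letter $s_{\hat1}$ recovers a unique factorization $w=uv$ with distinguished reduced words for $u$ and for $v$. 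Homogeneity of $G_w$ is clear since each summand $E_u\Sf_v$ has degree $\ell(u)+\ell(v)=\ell(w)$, and $G_{\id}=1$ because the only admissible factorization of $\id$ is trivial, with $E_{\id}=\Sf_{\id}=1$. It remains to verify the recursion.

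For a symmetric-group generator $s_i$ with $i\geq 1$ this is routine. Since $\partial_i$ operates only on the $\zz$-variables and fixes each $E_u$, we have $\partial_i G_w=\sum_{uv=w}E_u\,\partial_i\Sf_v$, where $\partial_i\Sf_v$ equals $\Sf_{vs_i}$ if $\ell(vs_i)<\ell(v)$ and $0$ otherwise. Elementary Coxeter-length arguments — the exchange condition, plus the fact that right multiplication by a simple reflection changes length by exactly one — then give: if $\ell(ws_i)<\ell(w)$, the assignment $(u,v)\mapsto(u,vs_i)$ is a bijection from the factorizations of $w$ with $\ell(vs_i)<\ell(v)$ onto all factorizations of $ws_i$ (the remaining factorizations of $w$ contributing $0$); and if $\ell(ws_i)>\ell(w)$, no factorization of $w$ satisfies $\ell(vs_i)<\ell(v)$, so $\partial_iG_w=0$. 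Here one uses that $s_i$ with $i\geq1$ normalizes $\Sc_\infty$, so $vs_i$ is again a permutation and $E_u$ is unchanged. This settles the recursion for all $s_i$, $i\geq1$.

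The case $i=\hat1$ is the crux and the principal obstacle. Now $\partial_{\hat1}=(1-s_{\hat1})/(-z_1-z_2)$ mixes the two alphabets — $s_{\hat1}$ sends $(z_1,z_2,z_3,\dots;x_1,x_2,\dots)$ to $(-z_2,-z_1,z_3,\dots;z_1,z_2,x_1,x_2,\dots)$ — and, crucially, $s_{\hat1}\notin\Sc_\infty$, so multiplying $w$ on the right by $s_{\hat1}$ does not preserve the shape $w=uv$ of the factorizations indexing $G_w$. The plan is to work with the word-splitting form of $G_w$ from the first paragraph — a sum over pairs (reduced word $\cc$ of $w$, admissible cut point $k$) weighted by $2^{\,i(\alpha)-o(\aaa)}\xx^\alpha\zz^\beta$ over $x$-admissible $\alpha$ for the prefix word $\aaa$ and $z$-admissible $\beta$ for the suffix — and to track the action of $\partial_{\hat1}$ on it directly, using the Leibniz rule $\partial_{\hat1}(fg)=(\partial_{\hat1}f)\,g+(s_{\hat1}f)\,(\partial_{\hat1}g)$ together with the explicit $s_{\hat1}$-action. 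When $\hat1$ is a descent of $w$, the combined effect should be to push a letter $s_{\hat1}$ across the boundary into the symmetric part, with the leading entries of $\beta$ absorbed into $\alpha$, so that the data reassemble into exactly the word-splitting form of $G_{ws_{\hat1}}$; the terms of $\partial_{\hat1}G_w$ not of this shape must be shown to cancel. The delicate point — where essentially all the work lies — is the bookkeeping of the powers of $2$: how $i(\alpha)$ (the number of distinct $x$-variables) and $o(\aaa)$ (the number of occurrences of $s_1$ and $s_{\hat1}$) change, and how the peak/admissibility conditions behave. This is the type-$D$ refinement of the corresponding step in the proof of Theorem~\ref{thm:cschub}, where the boundary letter is the simpler $s_0$ and the weight has no $2^{-o(\aaa)}$ factor; an alternative, perhaps cleaner, route is to deduce Theorem~\ref{thm:dschub} from Theorem~\ref{thm:cschub} by restriction along $\Dc_\infty\subset\BCc_\infty$, comparing $\Df_w$ with $\Cf_w$ and $E_u$ with $F_u$ for $w,u\in\Dc_\infty$ and using $s_{\hat1}=s_0s_1s_0$. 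Either way, reconciling the sign-change counts (the exponents $s(w)$, $o(\aaa)$, $i(\alpha)$) with the relation $s_{\hat1}=s_0s_1s_0$ is the hard part.
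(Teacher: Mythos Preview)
The paper does not prove this statement; it is quoted verbatim as \cite[Thm~4A]{BilleyHaiman95} and used as input for the constructions in Section~\ref{sec:main}. So there is no in-paper argument to compare your proposal against.

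That said, your proposal is not a proof but a sketch that stops at the essential difficulty. The $i\geq 1$ case is fine, and the reduction of the second equality to the first via Theorem~\ref{thm:sschub} is correct. But for $i=\hat 1$ you only describe what ``should'' happen: the terms not matching $G_{ws_{\hat1}}$ ``must be shown to cancel'', and the bookkeeping of $i(\alpha)$ and $o(\aaa)$ is identified as ``the hard part'' without being carried out. The alternative route you propose --- deducing the type-$D$ statement from Theorem~\ref{thm:cschub} via $s_{\hat1}=s_0s_1s_0$ --- is also left as a suggestion rather than executed; note that $\Df_w$ is \emph{not} simply the restriction of $\Cf_w$ to $\Dc_\infty$ (compare $\Bf_w=2^{-s(w)}\Cf_w$, whereas the relation between $\Df_w$ and $\Cf_w$ is more subtle), so this route would require real work too. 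In short, you have correctly located where the content lies but have not supplied it. If you want to complete the argument yourself rather than cite \cite{BilleyHaiman95}, the cleanest path is probably the one Billey and Haiman actually take: prove directly that $E_u$ satisfies a transition/recursion compatible with $\partial_{\hat1}$ (their analysis of Stanley symmetric functions of type $D$), rather than attacking $\partial_{\hat1}G_w$ head-on.
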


\subsection{Pipe dreams in type $A$}\label{ssec:pdtypea}

In this subsection we briefly recall the construction of pipe dreams for the symmetric group $\Sc_n$. We fix a positive integer $n$ and consider the staircase Young diagram with rows of length $n-1,\dots,2,1$. We will refer to this diagram  as the \emph{base}   $B_{\Sc_n}$. Let us index all the boxes in the $i$-th row by the variable $z_i$ (cf.~Fig.~\ref{A-base}).

Now let us fill this diagram by elements of two types:  \emph{elbow joints} $\elbows$ and \emph{crosses} $\cross$.  We also put a single elbow  $\elbow$ to the right of the rightmost box of each row.  We obtain a diagram consisting of $n$ strands, or \emph{pipes} (hence the name ``pipe dream'').

\begin{figure}[ht]
\includegraphics{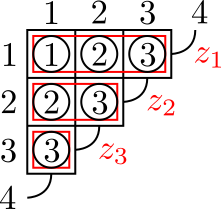}
\caption{Base diagram for pipe dreams of size $n=4$}
\label{A-base}
\end{figure}

For each box $\boxx$ we introduce the following notation:
\begin{itemize}
	\item $\wt(\boxx)$ is the weight of an element in this box. It is equal to $1$ for a cross $\cross$ and to $0$ for an elbow joint $\elbows$.
	\item $\sigma(\boxx)\in\Sc_n$ is an element  equal to $s_i$ for a cross in the box with the number $i$ and to the identity permutation $\id$ for an elbow.
	\item $\var(\boxx)$ is the variable corresponding to this box, i.e. $z_i$, where $i$ is the row number.
\end{itemize}

To each pipe dream $D$ we can assign:
\begin{itemize}
	\item a monomial $\zz^{\beta(D)}=\prod_{\boxx\in B_{\Sc_n}}\var(\boxx)^{\wt(\boxx)}$
	(i.e. the product of all variables $z_i$ over all the crosses in $D$);
	\item A word $\word(D)$ obtained by writing out all the letters $\sigma(\boxx)$ while reading all the boxes in the base from right to left, from top to bottom;	
	\item If  $\word(D)$ is a reduced word for the permutation $w\in\Sc_n$, then the pipe dream $D$ is said to be \emph{reduced}. We shall say that $w=w(D)$ is the \emph{shape} of $D$.
\end{itemize}
	
	This definition can be restated as follows: a pipe dream is said to be reduced if each two strands cross at most once. Its shape is the permutation $w\in\Sc_n$ such that the strands connect the numbers $1,2,\ldots,n$ on the left-hand side of the diagram with the numbers $w(1),w(2),\ldots,w(n)$ on its top.

\begin{example} Here are two examples of pipe dreams of size $n=4$.\vspace{0.5cm}
	
	\begin{minipage}{\textwidth}
		\begin{minipage}{0.49\textwidth}
			$$D_1=\begingroup
			\setlength\arraycolsep{0pt}
			\renewcommand{\arraystretch}{0.06}
			\begin{matrix}
			& 1 & 2 & 3 & 4  \\
			1 & \cross & \cross & \cross & \elbow & \\
			2 & \elbows & \elbows & \elbow & \\
			3 & \cross & \elbow & \\
			4 & \elbow & \\
			\end{matrix}
			\endgroup$$
			$$\zz^{\beta(D_1)}=z_1^3z_3$$
			$$\word(D_1)=s_3s_2s_1s_3$$
			$$w(D_1)=4132\in\Sc_4$$
		\end{minipage}		
		\begin{minipage}{0.49\textwidth}
			$$D_2=\begingroup
			\setlength\arraycolsep{0pt}
			\renewcommand{\arraystretch}{0.06}
			\begin{matrix}
			& 1 & 2 & 3 & 4  \\
			1 & \elbows & \cross & \elbows & \elbow & \\
			2 & \cross & \cross & \elbow & \\
			3 & \cross & \elbow & \\
			4 & \elbow & \\
			\end{matrix}
			\endgroup$$
			$$\zz^{\beta(D_2)}=z_1z_2^2z_3$$
			$$\word(D_2)=s_2s_3s_2s_3 $$
			\begin{center}
				$D_2$ is non-reduced.
			\end{center}
		\end{minipage}
	\end{minipage}
\end{example}

Let us show that for a reduced pipe dream $D$ of shape $w$ the monomial  $\zz^{\beta(D)}=z_{j_1}z_{j_2}\ldots z_{j_l}$ is  $z$-admissible for the word $\word(D)=s_{a_1}s_{a_2}\ldots s_{a_l}$. 

Indeed, the $i$-th cross is positioned in the $j_i$-th row and $a_i$-th diagonal (crosses are ordered from right to left, from top to bottom, so $j_1\leq j_2\leq\ldots\leq j_l$). Since the numbers of the diagonals decrease while reading each row right to left, then $j_i=j_{i+1}$ implies $a_i>a_{i+1}$. Since the row number of the box does not exceed the number of the corresponding diagonal, we have $j_i\leq a_i$.

Let  $w\in\Sc_n$, $\aaa=s_{a_1}s_{a_2}\ldots s_{a_l}\in R(w)$, and $\zz^{\beta}=z_{j_1}z_{j_2}\ldots z_{j_l}\in\Ar_z(\aaa)$. There exists a unique pipe dream $D$ such that $\word(D)=\aaa$ and $\zz^{\beta(D)}=\zz^{\beta}$. It can be obtained by placing a cross at the intersection of the $j_i$-th row and the  $a_i$-th diagonal for each $i$.

It is clear that the set of all reduced pipe dreams of shape $w\in S_n$ does not depend upon $n$, i.e. is stable under the embedding $S_n\hookrightarrow S_{n+1}$. Denote this set by $\PD_{A}(w)$. Theorem~$\ref{thm:sschub}$ can be restated as follows.

\begin{theorem}[{\cite{BilleyBergeron93},\cite{FominKirillov96}}]\label{sdreams}
For a permutation $w\in\Sc_n$, we have
\[
\Sf_w(\zz)=\sum_{D\in\PD_{A}(w)}\zz^{\beta(D)}.
\]
\end{theorem}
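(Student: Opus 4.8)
The plan is to deduce Theorem~\ref{sdreams} from Theorem~\ref{thm:sschub} by exhibiting a weight-preserving bijection between $\PD_A(w)$ and the set of pairs $(\aaa,\zz^\beta)$ with $\aaa\in R(w)$ and $\zz^\beta\in\Ar_z(\aaa)$. In fact, almost all of the needed work has already been done in the discussion immediately preceding the theorem statement; the proof is essentially an assembly of those observations. First I would recall that to each reduced pipe dream $D$ of shape $w$ we have associated the word $\word(D)\in R(w)$ (reduced by definition of ``reduced pipe dream'') and the monomial $\zz^{\beta(D)}$, and that the paragraph above the theorem verifies $\zz^{\beta(D)}\in\Ar_z(\word(D))$: reading crosses right-to-left and top-to-bottom gives $j_1\le j_2\le\cdots\le j_l$, the row index never exceeds the diagonal index so $j_i\le a_i$, and within a fixed row the diagonal indices strictly decrease so $j_i=j_{i+1}$ forces $a_i>a_{i+1}$. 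This gives a well-defined map $D\mapsto(\word(D),\zz^{\beta(D)})$ from $\PD_A(w)$ into the index set of the sum in Theorem~\ref{thm:sschub}.

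For the inverse direction, given $\aaa=s_{a_1}\cdots s_{a_l}\in R(w)$ and $\zz^\beta=z_{j_1}\cdots z_{j_l}\in\Ar_z(\aaa)$, I would place a cross in row $j_i$ on diagonal $a_i$ for each $i$ and fill all remaining boxes of the base $B_{\Sc_n}$ with elbow joints, exactly as in the excerpt. Here one must check this is legitimate: the condition $j_i\le a_i$ guarantees box $(j_i,a_i)$ actually lies in the staircase base, and the admissibility condition guarantees no two crosses are placed in the same box (if $j_i=j_{i'}$ for $i<i'$ then, using that within an admissible sequence consecutive equal values force strictly decreasing $a$'s, one gets $a_i>a_{i+1}>\cdots>a_{i'}$, so the diagonals differ). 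By construction the resulting pipe dream $D$ has $\word(D)=\aaa$, hence is reduced with $w(D)=w$, and $\zz^{\beta(D)}=\zz^\beta$. Uniqueness of such a $D$ is clear since the word together with the monomial pins down the row and diagonal of every cross. Thus the two maps are mutually inverse bijections, and they are weight-preserving by definition of $\zz^{\beta(D)}$.

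Finally I would conclude: summing $\zz^{\beta(D)}$ over $D\in\PD_A(w)$ equals, via this bijection, the sum of $\zz^\beta$ over all $\aaa\in R(w)$ and $\zz^\beta\in\Ar_z(\aaa)$, which is $\Sf_w(\zz)$ by Theorem~\ref{thm:sschub}. Stability under $\Sc_n\hookrightarrow\Sc_{n+1}$ is automatic: adding one more row to the base adds only boxes on diagonals $\ge n$, and for $w\in\Sc_n$ no reduced word uses $s_i$ with $i\ge n$, so every cross of a pipe dream of shape $w$ already sits strictly inside $B_{\Sc_n}$; hence $\PD_A(w)$ is genuinely independent of $n$ and the formula is consistent.

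\textbf{Main obstacle.} There is no deep obstacle — the theorem is a repackaging of Theorem~\ref{thm:sschub}. The one point requiring care is the well-definedness of the inverse map, i.e. verifying that distinct indices $i$ in an $\Ar_z$-sequence never collide in the same box of the base; this is where the precise form of the $z$-admissibility condition (the interaction of $j_i\le a_i$ with ``$j_i=j_{i+1}\Rightarrow a_i>a_{i+1}$'') is used, and it should be stated explicitly rather than waved through.
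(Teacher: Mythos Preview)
Your proposal is correct and matches the paper's approach exactly: the paper also treats Theorem~\ref{sdreams} as a direct restatement of Theorem~\ref{thm:sschub} via the bijection $D\leftrightarrow(\word(D),\zz^{\beta(D)})$, with the forward and inverse maps described in the paragraphs immediately preceding the theorem. Your write-up is in fact slightly more careful than the paper's, since you spell out why distinct crosses cannot collide in the same box when building the inverse map.
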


\begin{example}
	Let $w=s_i\in\Sc_n$, $i<n$. If $D\in\PD_{A}(w)$ is a reduced pipe dream, then $D$ contains exactly one cross. It is located in the $i$-th diagonal; the row number of this cross can be equal to $1, 2,\dots,(i-1),i$. So
\[
\Sf_{s_i}(\zz)=z_1+z_2+\ldots+z_i.
\]
\end{example}
\begin{example}
	Let $w=1432\in\Sc_4$. There are five reduced pipe dreams of shape $w$:
\[
	\begingroup
	\setlength\arraycolsep{0pt}
	\renewcommand{\arraystretch}{0.06}
	\begin{matrix}
	& 1 & 2 & 3 & 4  \\
	1 & \elbows & \cross & \cross & \elbow & \\
	2 & \elbows & \cross & \elbow & \\
	3 & \elbows & \elbow & \\
	4 & \elbow & \\
	\end{matrix}\quad
	\begin{matrix}
	& 1 & 2 & 3 & 4  \\
	1 & \elbows & \cross & \cross & \elbow & \\
	2 & \elbows & \elbows & \elbow & \\
	3 & \cross & \elbow & \\
	4 & \elbow & \\
	\end{matrix}
	\quad\begin{matrix}
	& 1 & 2 & 3 & 4  \\
	1 & \elbows & \cross & \elbows & \elbow & \\
	2 & \cross & \cross & \elbow & \\
	3 & \elbows & \elbow & \\
	4 & \elbow & \\
	\end{matrix}
	\quad\begin{matrix}
	& 1 & 2 & 3 & 4  \\
	1 & \elbows & \elbows & \cross & \elbow & \\
	2 & \cross & \elbows & \elbow & \\
	3 & \cross & \elbow & \\
	4 & \elbow & \\
	\end{matrix}
	\quad\begin{matrix}
	& 1 & 2 & 3 & 4  \\
	1 & \elbows & \elbows & \elbows & \elbow & \\
	2 & \cross & \cross & \elbow & \\
	3 & \cross & \elbow & \\
	4 & \elbow & \\
	\end{matrix}
	\quad
	\endgroup
\]
The Schubert polynomial is thus equal to
\[
\Sf_{1432}(\zz)=z_1^2z_2+z_1^2z_3+z_1z_2^2+z_1z_2z_3+z_2^2z_3.
\]
\end{example}

\section{Construction of pipe dreams in the types $B$, $C$, and $D$}\label{sec:main}

\subsection{Type $C$}\label{ssec:mainc} 

The Schubert polynomials of the types $B$, $C$, and $D$ are elements of the ring $\QQ[\zz,p_1(\xx),p_3(\xx),\dots]$, i.e. they are polynomials in $\zz$ and symmetric functions in $\xx$. For our purposes, it will be sometimes more convenient to deal with polynomials both in $\zz$ and $\xx$. For this let us give the following technical definition.

\begin{definition} Let $F\in\QQ[\zz,p_1(\xx),p_3(\xx),\dots]$, and let $k\geq 0$ be a nonnegative integer. We define \emph{$k$-truncation} of $F$ as follows:
\[
F^{[k]}(\zz,x_1,\dots,x_k)=F(\zz,x_1,\dots,x_k,0,0,\dots).
\]
It is a symmetric polynomial in $x_1,\dots,x_k$.
\end{definition}

Let $w\in\BCc_n$ be a signed permutation of $n$ variables. Consider the Schubert polynomial $\Cf_w(\zz,\xx)$ of the type $C$. 
As above,  denote by $\Cf_w^{[k]}(\zz,\xx)$ its $k$-truncation.

The pipe dreams for $\Cf_w^{[k]}$ are obtained by putting elements inside boxes of the shape $B^k_{\Cc_n}$ shown on Fig.~\ref{C-base}.  It consists of a \emph{staircase block}, which is the staircase Young diagram $(n-1,n-2,\dots,2,1)$, and of $k$ consecutive blocks of a different shape, which we call \emph{$\Gamma$-blocks}. Each $\Gamma$-block consists of an $n\times 1$ column and a $1\times n$ row; all the boxes outside these blocks are filled by elbow joints (this part is sometimes referred to as the \emph{sea of elbows}). The boxes in each row and each column of $\Gamma$-blocks are indexed by the integers from $0$ to $n-1$, from top to bottom and from left to right respectively.

\begin{figure}[ht]
	$$\includegraphics[width=0.8\textwidth]{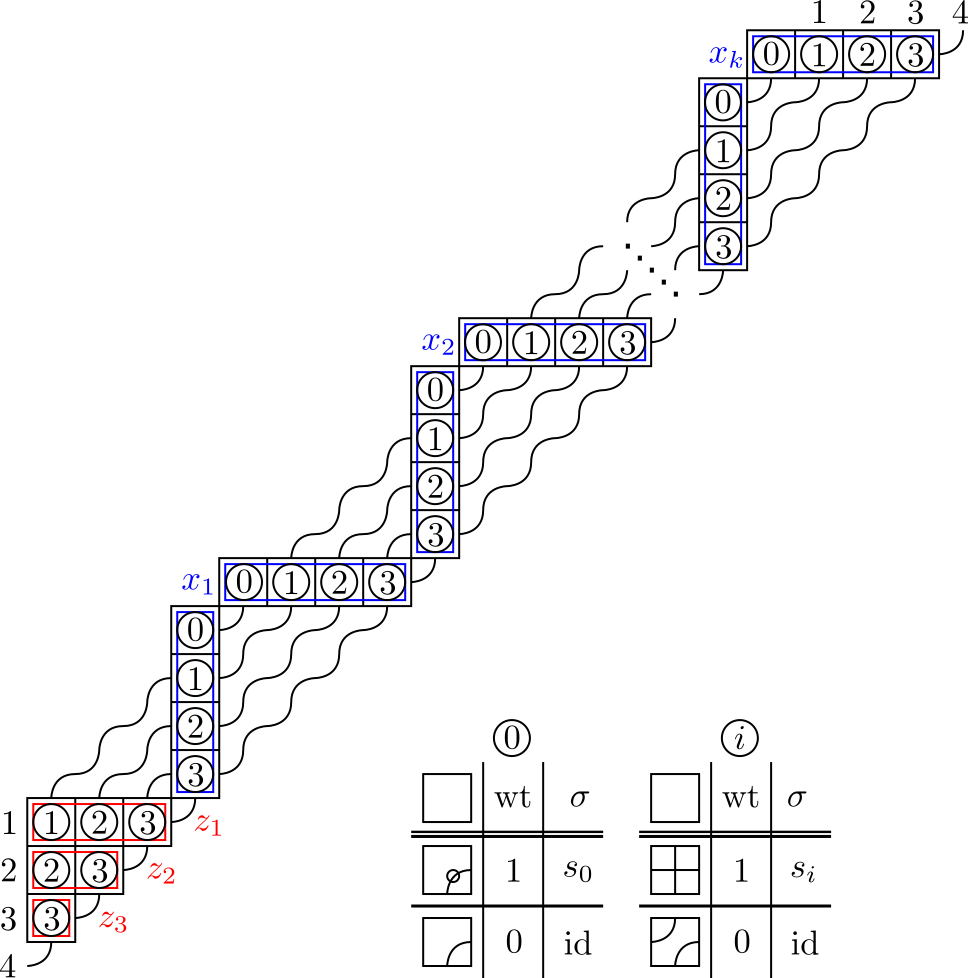}$$
	\caption{The base $B^k_{\Cc_4}$ for $c$-signed pipe dreams}
	\label{C-base}
\end{figure}

The topmost box of each column and the leftmost box of each row can be filled by the elements of two different kinds:  the single elbow joint $\elbowup$ or by a new kind of element, \emph{an elbow joint with a faucet} $\elbowsign$. All the remaining boxes are filled either with elbow joints $\elbows$ or crosses $\cross$. We will refer to crosses and to elbows with faucets as \emph{significant elements}. The diagram obtained by this construction is called a \emph{c-signed pipe dream}. It consists of $n$ strands, or pipes, connecting the left edge of the diagram with its  top edge. We index the left and the top edges of the pipes by $1,\dots,n$; as in the case of classical pipe dreams, this defines a permutation. Moreover, some of the pipes have faucets on them; each faucet is responsible for the sign change for the corresponding variable.

In more formal terms, we can assign to each box $\boxx$ the following data:
\begin{itemize}
	\item the weight $\wt(\boxx)$ of the element inside it. It is equal to $1$ for crosses and elbows with faucets $\elbowsign$ (i.e. for significant elements) and $0$ for elbows $\elbowup,\elbows$.
	\item A letter $\sigma(\boxx) \in \Cc_n$.  It is equal to $s_i$ for a cross inside a box indexed by $i>0$, to $s_0$ for an elbow with a faucet in a box indexed by zero, and to nothing (the identity permutation $\id$) for elbow joints.
	\item A variable $\var(\boxx)$. It is equal to $z_i$ for a box in the  $i$-th row of the staircase, counted from above, and to $x_i$ for a box in the  $i$-th $\Gamma$-block, counted from below.
\end{itemize}

To each $c$-signed pipe dream $D$ we can assign the  following data:
\begin{itemize}
	\item A monomial $\xx^{\alpha(D)}\zz^{\beta(D)}=\prod_{\boxx\in B^k_{\Cc_n}}\var(\boxx)^{\wt(\boxx)}$
	(i.e. the product of the variables for all significant elements in $D$).
	\item A word $\word(D)$ obtained as the result of reading the letters $\sigma(\boxx)$ while reading the boxes of the base from right to left, top to bottom.
	\item The word $\word(D)$ is subdivided into two parts, $\word_x(D)$ and $\word_z(D)$. The first of them consists of the letters  $\sigma(\boxx)$ from the $\Gamma$-blocks, while the second one corresponds to the staircase part of the pipe dream.
	\item If $\word(D)$ is a reduced word for a permutation $w=w(D)\in\Cc_n$, then the pipe dream  $D$ is said to be \emph{reduced}, and the permutation $w$ is called the \emph{shape} of $D$.
	
The last definition can be restated as follows. A $c$-signed pipe dream $D$	is reduced, if:
	\begin{itemize}
		\item each strand has no more than one faucet on it;
		\item if two strands intersect twice, then exactly one of these strands has  a faucet located between these two intersections.
	\end{itemize}
	The shape of a reduced $c$-signed pipe dream is the permutation $w\in\Cc_n$ such that each strand connects the number $i$ on the left of the diagram with $w(i)$ or $-w(i)$ above it. If the strand has a faucet on it, this corresponds to the minus sign; no faucet corresponds to plus.
	\item If $\word(D)$ is a reduced word for $w\in\Cc_n$, then $\word_x(D)$ and $\word_z(D)$ are reduced words for the permutations $u=u(D)\in\Cc_n$ and $v=v(D)\in\Sc_n$, respectively. Note that $uv=w$ and $\ell(u)+\ell(v)=\ell(w)$.
\end{itemize}

\begin{example}
	Here are three examples of $c$-signed pipe dreams of size $n=3$ with $k=2$ $\Gamma$-blocks.\vspace{0.5cm}
	
	\begin{minipage}{\textwidth}
		\begin{minipage}{0.31\textwidth}
			$$\begingroup
			\setlength\arraycolsep{-0.3pt}
			\renewcommand{\arraystretch}{0.0}
			D_1=\begin{matrix}
			&&&&&&&&&1&2&3\\[2pt]
			&&&&&&&& \belbowup&\bcross&\belbows&\elbow\\
			&&&&&&&\belbowup&\elbows&\elbows&\elbow\\
			&&&&&&\elbowup&\belbows&\elbows&\elbow\\
			&&&&&\elbowup&\elbows&\belbows&\elbow\\		
			&&&&\belbowsign&\bcross&\bcross&\elbow\\
			&&&\belbowup&\elbows&\elbows&\elbow\\
			&&\elbowup&\belbows&\elbows&\elbow\\
			&\elbowup&\elbows&\belbows&\elbow\\
			1\,&\bcross&\belbows&\elbow\\
			2\,&\bcross&\elbow\\
			3\,&\elbow
			\end{matrix}
			\endgroup$$
			$$\xx^{\alpha(D_1)}\zz^{\beta(D_1)}=z_1 z_2 x_1^3 x_2$$
			$$\word(D_1)=s_1s_2s_1s_0s_1s_2$$
			$$w(D_1)=21\overline 3\in\BCc_3$$
		\end{minipage}
		\begin{minipage}{0.31\textwidth}
			$$\begingroup
			\setlength\arraycolsep{-0.3pt}
			\renewcommand{\arraystretch}{0.0}
			D_2=\begin{matrix}
			&&&&&&&&&1&2&3\\[2pt]
			&&&&&&&& \belbowup&\bcross&\belbows&\elbow\\
			&&&&&&&\belbowup&\elbows&\elbows&\elbow\\
			&&&&&&\elbowup&\belbows&\elbows&\elbow\\
			&&&&&\elbowup&\elbows&\belbows&\elbow\\
			&&&&\belbowup&\bcross&\bcross&\elbow\\
			&&&\belbowsign&\elbows&\elbows&\elbow\\
			&&\elbowup&\belbows&\elbows&\elbow\\
			&\elbowup&\elbows&\belbows&\elbow\\
			1\,&\bcross&\bcross&\elbow\\
			2\,&\bcross&\elbow\\
			3\,&\elbow
			\end{matrix}
			\endgroup$$
			$$\xx^{\alpha(D_2)}\zz^{\beta(D_2)}=z_1^2 z_2 x_1^3 x_2$$
			$$\word(D_2)=s_1s_2s_1s_0s_2s_1s_2$$
			\begin{center}
				$D_2$ is non-reduced.
			\end{center}
		\end{minipage}
		\begin{minipage}{0.31\textwidth}
			$$\begingroup
			\setlength\arraycolsep{-0.3pt}
			\renewcommand{\arraystretch}{0.0}
			D_3=\begin{matrix}
			&&&&&&&&&1&2&3\\[2pt]
			&&&&&&&& \belbowsign&\bcross&\belbows&\elbow\\
			&&&&&&&\belbowup&\elbows&\elbows&\elbow\\
			&&&&&&\elbowup&\belbows&\elbows&\elbow\\
			&&&&&\elbowup&\elbows&\belbows&\elbow\\
			&&&&\belbowup&\belbows&\bcross&\elbow\\
			&&&\belbowsign&\elbows&\elbows&\elbow\\
			&&\elbowup&\belbows&\elbows&\elbow\\
			&\elbowup&\elbows&\belbows&\elbow\\
			1\,&\bcross&\belbows&\elbow\\
			2\,&\bcross&\elbow\\
			3\,&\elbow
			\end{matrix}
			\endgroup$$
			$$\xx^{\alpha(D_3)}\zz^{\beta(D_3)}=z_1 z_2 x_1^2 x_2^2$$
			$$\word(D_3)=s_1s_0s_2s_0s_1s_2$$
			\begin{center}
				$D_3$ is non-reduced.
			\end{center}
		\end{minipage}
	\end{minipage}
\end{example}

If  $D$ is a reduced  $c$-signed pipe dream, the monomial $\zz^{\beta(D)}$  is $z$-admissible for $\word_z(D)$ (the proof is similar to the case of $\Sc_n$, cf. Sec.~\ref{ssec:pdtypea}), while the monomial $\xx^{\alpha(D)}=x_{j_1}x_{j_2}\ldots x_{j_l}$ is  $x$-admissible for the word $\word_x(D)=s_{a_1}s_{a_2}\ldots s_{a_l}$. Indeed, $j_1\geq j_2\geq\ldots\geq j_l$ are exactly the numbers of $\Gamma$-blocks that contain significant elements. While reading each $\Gamma$-block right to left, top to bottom the numbers $a_i$ inside the boxes first decrease and then increase. This means that the equality $j_{i-1}=j_i=j_{i+1}$ (three neighboring significant elements are located in the same $\Gamma$-block) implies that either  $a_{i-1}> a_i$, or $a_i< a_{i+1}$, and hence $i\not\in P\left(\word_x(D)\right)$.

\begin{proposition}
Let $w\in\Cc_n$ be decomposed into a product $uv=w$, where $v\in\Sc_n, \ell(u)+\ell(v)=\ell(w)$. Let $\aaa=s_{a_1}s_{a_2}\ldots s_{a_l} \in R(u)$ and $\bb\in R(v)$  be reduced words, and let $\xx^\alpha=x_{j_1}x_{j_2}\ldots x_{j_l}\in\Ar_x(\aaa), \zz^\beta\in\Ar_z(\bb)$ be admissilbe monomials, such that $k\geq j_1\geq j_2\ldots\geq j_l$. Then there exist exactly $2^{i(\alpha)}$ reduced $c$-signed pipe dreams $D$, such that $\word_x(D)=\aaa, \word_z(D)=\bb, \xx^{\alpha(D)}=\xx^\alpha, \zz^{\beta(D)}=\zz^{\beta}$.
\end{proposition}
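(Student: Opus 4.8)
The plan is to reconstruct the $c$-signed pipe dream $D$ from the combinatorial data $(\aaa,\bb,\xx^\alpha,\zz^\beta)$ in two stages: first place the crosses in the staircase block, then place the significant elements in the $\Gamma$-blocks; the factor $2^{i(\alpha)}$ will arise precisely from the choice, for each $\Gamma$-block actually used, of which of its two ``corner'' boxes (the topmost box of the column or the leftmost box of the row, both indexed by $0$) carries the faucet $\elbowsign$ versus a crossing. First I would handle the staircase part: since $\bb\in R(v)$ with $v\in\Sc_n$ and $\zz^\beta\in\Ar_z(\bb)$, the type-$A$ discussion of Section~\ref{ssec:pdtypea} gives a unique placement of crosses in the staircase block whose reading word is $\bb$ and whose monomial is $\zz^\beta$ — namely, the $i$-th cross (in the right-to-left, top-to-bottom order) goes in row $j_i$ and diagonal $a_i$, and $z$-admissibility is exactly what makes this placement legal and the strands cross pairwise at most once. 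This fixes $\word_z(D)$ and $\zz^{\beta(D)}$ uniquely.

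Next I would treat the $\Gamma$-blocks. Reading a single $\Gamma$-block right-to-left, top-to-bottom means reading the horizontal row (right to left, indices $n-1,\dots,1,0$) and then the vertical column (top to bottom, indices $0,1,\dots,n-1$), so the index sequence within one block goes down to $0$ and then back up — a ``valley'' at the corner box indexed $0$. The hypothesis $\xx^\alpha\in\Ar_x(\aaa)$ says that whenever three consecutive letters of $\aaa$ lie in the same $\Gamma$-block (i.e. $j_{i-1}=j_i=j_{i+1}$), the middle one is not a peak of $\aaa$; combined with the valley shape of a block this forces, for each used block, that the letters $a_i$ falling in that block occupy a contiguous ``down-then-up'' stretch of the index sequence and in particular can be realized by placing a cross $\cross$ at the corresponding box for each index $a_i>0$ and an elbow-with-faucet $\elbowsign$ at the corner box indexed $0$ when $0$ occurs among those $a_i$. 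Here is where the $2^{i(\alpha)}$ enters: the index $0$ corresponds to the single corner of the $\Gamma$-block, but that corner is the meeting of the top box of the column and the left box of the row, which are two distinct boxes in the base $B^k_{\Cc_n}$; a letter $s_0$ can be placed in either one (the other then being a plain elbow $\elbowup$), and the two choices give genuinely different pipe dreams with the same word and monomial. Thus each of the $i(\alpha)$ used $\Gamma$-blocks contributes an independent binary choice, and I must check that every $\Gamma$-block used necessarily contains an $s_0$ — this follows because each $x$-variable in $\xx^\alpha$ appears in $F_u(\xx)$ weighted by $2^{i(\alpha)}$ in Theorem~\ref{thm:cschub}, but more directly from the structure of reduced words of $\BCc_\infty$: a block used in $\aaa$ that avoided $s_0$ would be reading only type-$A$ letters along one arm, which cannot produce an $x$-variable occurrence in the Billey--Haiman sense; I would make this precise by noting that the $x$-admissible monomial records exactly the block indices, and the reading convention forces a used block's letter stretch to straddle the valley index $0$.

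Finally I would argue that these are all of them and that they are pairwise distinct: given any reduced $c$-signed pipe dream $D'$ with the prescribed word and monomial, restricting to the staircase recovers the unique type-$A$ configuration above, and restricting to each $\Gamma$-block recovers the down-then-up cross placement together with one of the two faucet positions, so $D'$ is one of the $2^{i(\alpha)}$ dreams constructed; conversely all $2^{i(\alpha)}$ constructed dreams are reduced (each strand gets at most one faucet, and the two-intersection condition holds because a faucet sits between any two crossings of a pair of strands that meet twice — this is inherited from reducedness of $\aaa$ and $\bb$ and the fact that $\ell(u)+\ell(v)=\ell(w)$). The main obstacle I anticipate is the careful bookkeeping in the middle step: verifying that $x$-admissibility of $\xx^\alpha$ for $\aaa$ is \emph{exactly} the condition needed for a legal down-then-up placement in each $\Gamma$-block (no forbidden repeated adjacent indices except at the valley), and pinning down that the only placement freedom is the binary faucet-corner choice per used block — everything else (the staircase, the non-corner boxes of the $\Gamma$-blocks) being rigidly determined. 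I would prove the $x$-admissibility equivalence by the same peak/valley analysis already sketched in the paragraph preceding this proposition, just run in reverse.
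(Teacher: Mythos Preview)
Your treatment of the staircase block is correct and matches the paper. However, your handling of the $\Gamma$-blocks contains a genuine error: you claim that every used $\Gamma$-block necessarily contains the letter $s_0$, and that the binary choice comes from placing the faucet $\elbowsign$ in one of the two corner boxes indexed~$0$. This is false. Take for instance $u=s_1\in\BCc_n$ with $\xx^\alpha=x_1$: the single cross can sit in box~$1$ of the horizontal arm \emph{or} box~$1$ of the vertical arm of the first $\Gamma$-block, giving two distinct pipe dreams, neither of which carries a faucet at all. (This is exactly the computation behind $\Cf_{s_i}=z_1+\dots+z_i+2p_1(\xx)$ in Section~\ref{ssec:examples}.) Your attempted justification --- that a block avoiding $s_0$ ``cannot produce an $x$-variable occurrence in the Billey--Haiman sense'' --- conflates the $x$-monomial, which records only \emph{which} $\Gamma$-block each letter lands in, with the presence of $s_0$; these are independent pieces of data.

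The paper's argument is the following, and it is what you need instead. For the letters $a_i,\dots,a_{i+m}$ landing in a single $\Gamma$-block, $x$-admissibility (no peaks among consecutive equal $j$'s) forces this subsequence to strictly decrease to some minimum $a_{i+t}$ and then strictly increase. Since the reading order within a $\Gamma$-block traverses the horizontal arm right-to-left (indices $n-1,\dots,1,0$) and then the vertical arm top-to-bottom (indices $0,1,\dots,n-1$), the letters before the minimum are forced into the horizontal arm and those after into the vertical arm. The minimum letter $s_{a_{i+t}}$ itself, however, can go in the box indexed $a_{i+t}$ of \emph{either} arm --- whether $a_{i+t}=0$ (a faucet in one of the two $0$-boxes) or $a_{i+t}>0$ (a cross in one of the two $a_{i+t}$-boxes). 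That is the source of the binary choice per nonempty block, hence the factor $2^{i(\alpha)}$.
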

\begin{proof}
There exists exactly one placement of crosses in the staircase block (the proof is similar to the case of $\Sc_n$) and exactly two ways to place a nonzero number of significant elements in each of the $\Gamma$-blocks. Indeed, let $j_i=j_{i+1}=\ldots=j_{i+m}$. Then the sequence $a_{i}, a_{i+1},\ldots, a_{i+m}$ has no peaks, which means that it starts with a strictly decreasing segment (since a reduced word cannot contain two consecutive identical letters) and then strictly increases. Let  $a_{i+t}$ be the smallest number in this sequence.

The significant element for $a_{i+t}$ can be placed in two possible ways: in the horizontal or the vertical part of the $j_i$-th $\Gamma$-block. The significant elements for $a_{i},\ldots, a_{i+t-1}$ can be placed only in its horizontal part, and the remaining elements corresponding to $a_{i+t+1},\ldots, a_{i+m}$ are necessarily placed in the vertical part.

Since there are exactly $i(\alpha)$ blocks containing significant elements, and each of them can be filled in two possible ways, there are  $2^{i(\alpha)}$ pipe dreams for such a word.
\end{proof}

Denote by $\PD_{\Cc_n}^k(w)$ the set of all reduced $c$-signed pipe dreams with $k$ $\Gamma$-blocks of shape $w\in\Cc_n$. Then the previous discussion can be summarized as the following theorem.
\begin{theorem}\label{cdreams}
Let $w\in \BCc_n, k\geq 0$. Then the $k$-truncated Schubert polynomial  $\Cf_w^k(\zz,\xx)$ equals the sum of monomials over all reduced $c$-signed pipe dreams in $\PD_{\Cc_n}^k(w)$:
\[
\sum_{D\in\PD_{\Cc_n}^k(w)} \xx^{\alpha(D)}\zz^{\beta(D)}=\sum_{\substack{u v=w\\ \ell(u)+\ell(v)=\ell(w) \\ v \in \Sc_{n}}} \sum_{\substack{\aaa \in R(u)\\ \xx^{\alpha} \in \Ar_{x}(\mathbf{a})\\\alpha_{k+1}=\alpha_{k+2}=\ldots=0}} \sum_{\substack{\bb \in R(v)\\ \zz^{\beta} \in \Ar_{z}(\bb)}} 2^{i(\alpha)} \xx^{\alpha} \zz^{\beta}=\Cf_w^{[k]}(\zz,\xx).
\]
\end{theorem}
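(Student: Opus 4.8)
The plan is to derive Theorem~\ref{cdreams} essentially as a repackaging of the combinatorial facts already assembled, in two stages. First I would establish the first equality, namely that the generating function $\sum_{D\in\PD_{\Cc_n}^k(w)} \xx^{\alpha(D)}\zz^{\beta(D)}$ equals the triple sum. The key observation is that the two data attached to a reduced $c$-signed pipe dream $D$ --- the pair of reduced words $(\word_x(D),\word_z(D))$ together with the monomial pair $(\xx^{\alpha(D)},\zz^{\beta(D)})$ --- always satisfies the constraints indexing the right-hand side: by the bullet points preceding the proposition, $\word_x(D)\in R(u)$ and $\word_z(D)\in R(v)$ for a decomposition $uv=w$ with $\ell(u)+\ell(v)=\ell(w)$ and $v\in\Sc_n$, the monomial $\zz^{\beta(D)}$ is $z$-admissible for $\word_z(D)$, the monomial $\xx^{\alpha(D)}$ is $x$-admissible for $\word_x(D)$, and since all significant $\Gamma$-block elements sit in the first $k$ blocks we have $\alpha_{k+1}=\alpha_{k+2}=\dots=0$. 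Conversely, every tuple $(u,v,\aaa,\bb,\xx^\alpha,\zz^\beta)$ appearing on the right-hand side arises from some $D$, and by the Proposition it arises from \emph{exactly} $2^{i(\alpha)}$ such pipe dreams. Partitioning $\PD_{\Cc_n}^k(w)$ along the fibers of the map $D\mapsto(\word_x(D),\word_z(D),\xx^{\alpha(D)},\zz^{\beta(D)})$ and noting that $\xx^{\alpha(D)}\zz^{\beta(D)}$ is constant on each fiber of size $2^{i(\alpha)}$ immediately yields the first equality.

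Second, I would identify the triple sum with $\Cf_w^{[k]}(\zz,\xx)$. This is precisely Theorem~\ref{thm:cschub} after imposing the truncation $x_{k+1}=x_{k+2}=\dots=0$: setting the variables $x_{k+1},x_{k+2},\dots$ to zero in the expansion
\[
\Cf_w(\zz;\xx)=\sum_{\substack{uv=w\\ \ell(u)+\ell(v)=\ell(w)\\ v\in\Sc_n}}\ \sum_{\substack{\aaa\in R(u)\\ \xx^\alpha\in\Ar_x(\aaa)}}\ \sum_{\substack{\bb\in R(v)\\ \zz^\beta\in\Ar_z(\bb)}} 2^{i(\alpha)}\xx^\alpha\zz^\beta
\]
kills exactly the terms with $\alpha_j\neq 0$ for some $j>k$ and leaves the remaining terms untouched, which is exactly the extra condition $\alpha_{k+1}=\alpha_{k+2}=\dots=0$ written under the middle sum. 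Hence the triple sum equals $\Cf_w^{[k]}(\zz,\xx)$ by definition of $k$-truncation, and the notation $\Cf_w^k$ in the statement is just shorthand for $\Cf_w^{[k]}$.

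The only genuinely substantive ingredient is the Proposition (the count of $2^{i(\alpha)}$ pipe dreams per admissible tuple), which has already been proven; everything else is bookkeeping. Accordingly I expect the main subtlety --- such as it is --- to be purely notational: one must check that the reading order (right to left, top to bottom, $\Gamma$-blocks before staircase) used to define $\word(D)=\word_x(D)\word_z(D)$ is consistent with the order of factors $u$ then $v$ in $w=uv$, so that reducedness of $\word(D)$ is equivalent to the conjunction ``$\word_x(D)\in R(u)$, $\word_z(D)\in R(v)$, $\ell(u)+\ell(v)=\ell(w)$''; this consistency, however, is already recorded in the last bullet point before the Proposition. No step requires new ideas, so the proof reduces to citing the Proposition, Theorem~\ref{thm:cschub}, and the definition of $k$-truncation, assembled in the order above.
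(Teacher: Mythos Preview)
Your proposal is correct and matches the paper's approach exactly: the paper does not write out a separate proof but simply states that ``the previous discussion can be summarized as the following theorem,'' where the previous discussion consists precisely of the admissibility observations and the Proposition counting $2^{i(\alpha)}$ pipe dreams per tuple, combined with Theorem~\ref{thm:cschub}. Your write-up is, if anything, more explicit than the paper itself about how the pieces fit together.
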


\subsection{Type $B$}\label{ssec:mainb}

The Schubert polynomials  $\Bf_w$ of type $B$ differ from  $\Cf_w$ only by a multiplicative coefficient: $\Bf_w=2^{-s(w)}\Cf_w$, where $s(w)$ is the number of integers from $1$ to $n$ that change their sign under the action of $w$. The number of sign changes in $w$ is equal to the number of entries of $s_0$ in each  reduced word $\aaa\in R(w)$. 

We can use the formula from Theorem~\ref{thm:cschub} to get a similar formula for the polynomials $\Bf_w$: 
	$$\Bf_{w}(\zz; \xx)=\sum_{\substack{u v=w\\ \ell(u)+\ell(v)=\ell(w) \\ v \in \Sc_{n}}} \sum_{\substack{\aaa \in R(u)\\ \xx^{\alpha} \in \Ar_{x}(\mathbf{a})}} \sum_{\substack{\bb \in R(v)\\ \zz^{\beta} \in \Ar_{z}(\bb)}} 2^{i(\alpha)-s(w)} \xx^{\alpha} \zz^{\beta}.
$$

Here we describe a construction of $b$-signed pipe dreams.  Let us modify the base for $c$-signed pipe dreams in the following way: in each $\Gamma$-block we make the vertical and the horizontal part overlap, so that the whole block would consist of $2n-1$ boxes. A base with $k$ blocks will be denoted by $B^k_{\Bc_n}$. We will fill the boxes of this base in a similar way. The diagram we obtain is called a \emph{$b$-signed pipe dream} (cf.~Fig.~\ref{B-base}).
\begin{figure}[ht]
	$$\includegraphics[width=0.8\textwidth]{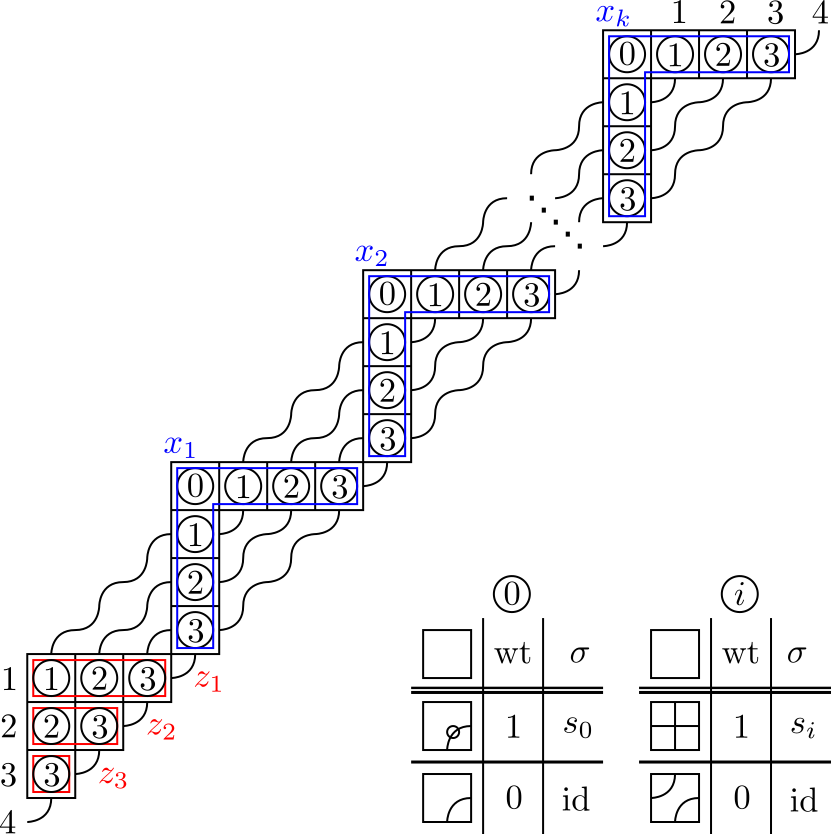}$$
	\caption{The base $B^k_{\Bc_4}$ for $b$-signed pipe dreams}
	\label{B-base}
\end{figure}

For each $b$-signed pipe dream $D$ the words $\word_x(D)$ and $\word_z(D)$, the monomial $\xx^{\alpha(D)}\zz^{\beta(D)}$, as well as the notion of reducibility, the shape $w(D)$ and the permutations $u(D), v(D)$ are defined similarly to $c$-signed pipe dreams.

In the same way we can show that for a reduced $b$-signed pipe dream $D$ the monomial  $\xx^{\alpha(D)}$ is $x$-admissible for the permutation $\word_x(D)$, and the monomial $\zz^{\beta(D)}$ is $z$-admissible for $\word_z(D)$.

\begin{example}
	Here are three examples of $b$-signed pipe dreams of size $n=3$ with $k=2$ $\Gamma$-blocks.\vspace{0.5cm}
	
	\begin{minipage}{\textwidth}
		\begin{minipage}{0.31\textwidth}
			$$\begingroup
			\setlength\arraycolsep{-0.3pt}
			\renewcommand{\arraystretch}{0.0}
			D_1=\begin{matrix}
			&&&&&&&1&2&3\\[2pt]
			&&&&&& \belbowup&\bcross&\belbows&\elbow\\
			&&&&&\elbowup&\belbows&\elbows&\elbow\\
			&&&&\elbowup&\elbows&\belbows&\elbow\\		
			&&&\belbowsign&\bcross&\bcross&\elbow\\
			&&\elbowup&\belbows&\elbows&\elbow\\
			&\elbowup&\elbows&\belbows&\elbow\\
			1\,&\bcross&\belbows&\elbow\\
			2\,&\bcross&\elbow\\
			3\,&\elbow
			\end{matrix}
			\endgroup$$
			$$\xx^{\alpha(D_1)}\zz^{\beta(D_1)}=z_1 z_2 x_1^3 x_2$$
			$$\word(D_1)=s_1s_2s_1s_0s_1s_2$$
			$$w(D_1)=21\overline 3\in\BCc_3$$
		\end{minipage}
		\begin{minipage}{0.31\textwidth}
			$$\begingroup
			\setlength\arraycolsep{-0.3pt}
			\renewcommand{\arraystretch}{0.0}
			D_2=\begin{matrix}
			&&&&&&&1&2&3\\[2pt]
			&&&&&& \belbowup&\bcross&\belbows&\elbow\\
			&&&&&\elbowup&\belbows&\elbows&\elbow\\
			&&&&\elbowup&\elbows&\belbows&\elbow\\
			&&&\belbowsign&\bcross&\bcross&\elbow\\
			&&\elbowup&\belbows&\elbows&\elbow\\
			&\elbowup&\elbows&\belbows&\elbow\\
			1\,&\bcross&\bcross&\elbow\\
			2\,&\bcross&\elbow\\
			3\,&\elbow
			\end{matrix}
			\endgroup$$
			$$\xx^{\alpha(D_2)}\zz^{\beta(D_2)}=z_1^2 z_2 x_1^3 x_2$$
			$$\word(D_2)=s_1s_2s_1s_0s_2s_1s_2$$
			\begin{center}
				$D_2$ is non-reduced.
			\end{center}
		\end{minipage}
		\begin{minipage}{0.31\textwidth}
			$$\begingroup
			\setlength\arraycolsep{-0.3pt}
			\renewcommand{\arraystretch}{0.0}
			D_3=\begin{matrix}
			&&&&&&&1&2&3\\[2pt]
			&&&&&& \belbowsign&\bcross&\belbows&\elbow\\
			&&&&&\elbowup&\belbows&\elbows&\elbow\\
			&&&&\elbowup&\elbows&\belbows&\elbow\\
			&&&\belbowsign&\belbows&\bcross&\elbow\\
			&&\elbowup&\belbows&\elbows&\elbow\\
			&\elbowup&\elbows&\belbows&\elbow\\
			1\,&\bcross&\belbows&\elbow\\
			2\,&\bcross&\elbow\\
			3\,&\elbow
			\end{matrix}
			\endgroup$$
			$$\xx^{\alpha(D_3)}\zz^{\beta(D_3)}=z_1 z_2 x_1^2 x_2^2$$
			$$\word(D_3)=s_1s_0s_2s_0s_1s_2$$
			\begin{center}
				$D_3$ is non-reduced.
			\end{center}
		\end{minipage}
	\end{minipage}
	
\end{example}

\begin{proposition}
Let  $w\in \BCc_n$ be decomposed into a product $uv=w$, where $v\in\Sc_n, \ell(u)+\ell(v)=\ell(w)$. Let $\aaa=s_{a_1}s_{a_2}\ldots s_{a_l} \in R(u), \bb\in R(v)$ be reduced words, and let  $\xx^\alpha=x_{j_1}x_{j_2}\ldots x_{j_l}\in\Ar_x(\aaa), \zz^\beta\in\Ar_z(\bb)$ be admissible monomials satisfying $k\geq j_1\geq j_2\ldots\geq j_l$.
	
Then there exist exactly  $2^{i(\alpha)-s(w)}$ reduced $b$-signed pipe dreams $D$ such that $\word_x(D)=\aaa, \word_z(D)=\bb, \xx^{\alpha(D)}=\xx^\alpha, \zz^{\beta(D)}=\zz^{\beta}$.
\end{proposition}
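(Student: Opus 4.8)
The plan is to follow the proof of the analogous proposition for $c$-signed pipe dreams almost verbatim, the only new ingredient being that in a $b$-signed base the horizontal and vertical arms of each $\Gamma$-block share their index-$0$ box. First I would split the data of $D$ into its staircase part and its $k$ $\Gamma$-blocks and argue, exactly as in the type $A$ case recalled in Section~\ref{ssec:pdtypea}, that the conditions $\word_z(D)=\bb$ and $\zz^{\beta(D)}=\zz^{\beta}$ pin down the crosses in the staircase block uniquely (using that $\zz^{\beta}$ is $z$-admissible for $\bb$ and that rows are read from right to left). So all the counting happens inside the $\Gamma$-blocks, and the conditions $\word_x(D)=\aaa$ and $\xx^{\alpha(D)}=\xx^{\alpha}$ distribute the significant elements among the blocks in the same way as in type $C$.

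Next I would group the letters of $\aaa=s_{a_1}\dots s_{a_l}$ by the common value of the $j_i$'s: a maximal run $j_i=\dots=j_{i+m}$ is exactly the set of significant elements living in one $\Gamma$-block, and $x$-admissibility forces $a_i,\dots,a_{i+m}$ to have no peaks, hence, being a segment of a reduced word, to strictly decrease to its minimum $a_{i+t}$ and then strictly increase. The letters before the minimum are forced into the horizontal arm and the letters after it into the vertical arm, so the only choice is where to put the minimal element. When $a_{i+t}\geq 1$ this is a cross and it may go either into box $a_{i+t}$ of the horizontal arm or into box $a_{i+t}$ of the vertical arm --- two distinct boxes, just as in type $C$ --- giving a factor $2$. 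When $a_{i+t}=0$ the minimal element is an elbow with a faucet, which by definition must occupy a box indexed $0$; in a $b$-signed pipe dream there is a single such box, namely the corner shared by the two arms, so this $\Gamma$-block contributes a factor $1$ instead of $2$.

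To finish I would count how many of the $i(\alpha)$ nonempty $\Gamma$-blocks are of this second type. A no-peak segment of a reduced word contains the letter $0$ at most once --- a repeat would be either two consecutive equal letters or would create a peak --- and when it does, $0$ is its minimum; so the occurrences of $s_0$ in $\aaa$ lie in pairwise distinct $\Gamma$-blocks, and a block loses its factor $2$ precisely when it contains an $s_0$. Since $uv=w$ is length-additive and $v\in\Sc_n$ has no sign changes, $\aaa\in R(u)$ contains exactly $s(u)=s(w)$ letters $s_0$. Hence among the $i(\alpha)$ nonempty $\Gamma$-blocks exactly $s(w)$ admit a unique filling and the remaining $i(\alpha)-s(w)$ admit two; combined with the unique staircase filling this yields $2^{i(\alpha)-s(w)}$ reduced $b$-signed pipe dreams $D$ with the prescribed data.

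The step I expect to need the most care --- and the only genuine difference from type $C$ --- is checking that collapsing the two arms of a $\Gamma$-block identifies exactly the two index-$0$ boxes and identifies no others, so that the forced placements of the non-minimal letters (all of index $\geq 1$) are unaffected and the factor of $2$ is lost on precisely the $s_0$-blocks; everything else is a transcription of the $c$-signed argument.
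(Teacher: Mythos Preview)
Your proposal is correct and follows essentially the same argument as the paper: group the letters of $\aaa$ by $\Gamma$-block, use the no-peak property to see that within each block the minimal letter is the only one with a choice, and observe that this choice collapses from two to one exactly when the minimum is $s_0$. You spell out a few points (uniqueness of the staircase filling, why $s_0$ appears in at most one box per block, why $s(u)=s(w)$) that the paper leaves implicit, but the structure and key ideas are identical.
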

\begin{proof}
We need to find the number of ways to put significant elements in all $\Gamma$-blocks. Let $j_i=j_{i+1}=\ldots=j_{i+m}$. The sequence $a_{i}, a_{i+1},\ldots, a_{i+m}$ contains no peaks, so it first decreases and then increases. Let $a_{i+t}$ be the minimal element of this sequence. If $a_{i+t}=0$, we need to place an elbow with a faucet in the upper-left box of the corresponding block; otherwise  $a_{i+t}\ne0$ and the cross corresponding to this letter can be placed either in the horizontal, or in the vertical part of the $\Gamma$-block. The elements corresponding to $a_{i},\ldots, a_{i+t-1}$ can be placed only in the horizontal part of the block, and those corresponding to $a_{i+t+1},\ldots, a_{i+m}$ can be only in the vertical part. Hence for each nonempty block with a faucet there is a unique way to place the elements, and for blocks without faucets there are two ways. The number of nonempty blocks without faucets is equal to $i(\alpha)-s(w)$. The proposition is proved.
\end{proof}

Denote by $\PD_{\Bc_n}^k(w)$ the set of all reduced  $b$-signed pipe dreams with $k$ $\Gamma$-blocks of shape $w\in \Bc_n$. Then the following theorem holds.

\begin{theorem}\label{bdreams}	
Let $w\in \BCc_n$ and $k\geq 0$. The $k$-truncation $\Bf_w^{[k]}(\zz,\xx)$ of the Schubert polynomial  $\Bf_w(\zz,\xx)$  can be obtained as the sum of monomials over all reduced b-signed pipe dreams in $\PD_{\Bc_n}^(w)$:
\[
\sum_{D\in\PD_{\Bc_n}^k(w)} \xx^{\alpha(D)}\zz^{\beta(D)}=\sum_{\substack{u v=w\\ \ell(u)+\ell(v)=\ell(w) \\ v \in \Sc_{n}}} \sum_{\substack{\aaa \in R(u)\\ \xx^{\alpha} \in \Ar_{x}(\mathbf{a})\\\alpha_{k+1}=\alpha_{k+2}=\ldots=0}} \sum_{\substack{\bb \in R(v)\\ \zz^{\beta} \in \Ar_{z}(\bb)}} 2^{i(\alpha)-s(w)} \xx^{\alpha} \zz^{\beta}=\Bf_w^{[k]}(\zz,\xx).
\]
\end{theorem}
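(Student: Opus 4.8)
The plan is to deduce the theorem from the counting Proposition stated just above, together with the expression for $\Bf_w$ in terms of Stanley symmetric functions displayed earlier in this subsection; the argument is a line-by-line transcription of the proof of Theorem~\ref{cdreams} for type $C$, with $2^{i(\alpha)}$ everywhere replaced by $2^{i(\alpha)-s(w)}$. First, since $\Bf_w=2^{-s(w)}\Cf_w$ and $s(w)$ equals the number of occurrences of $s_0$ in any reduced word for $w$ (hence is independent of the chosen word), Theorem~\ref{thm:cschub} yields the formula for $\Bf_w(\zz;\xx)$ recorded in this subsection; passing to the $k$-truncation $\Bf_w^{[k]}$ annihilates every monomial involving some $x_j$ with $j>k$, that is, it restricts the inner sum to $x$-admissible monomials with $\alpha_{k+1}=\alpha_{k+2}=\cdots=0$. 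This is exactly the middle expression in the statement, so it remains to show that $\sum_{D\in\PD_{\Bc_n}^k(w)}\xx^{\alpha(D)}\zz^{\beta(D)}$ equals that middle expression.

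Next I would organise the pipe-dream side. If $D\in\PD_{\Bc_n}^k(w)$, then by definition $\word(D)$ is a reduced word for $w$; reading the base top to bottom it splits as a prefix $\word_x(D)$ coming from the $\Gamma$-blocks followed by a suffix $\word_z(D)$ coming from the staircase block. Prefixes and suffixes of a reduced word are reduced, so $\word_x(D)\in R(u(D))$ and $\word_z(D)\in R(v(D))$ with $u(D)v(D)=w$ and $\ell(u(D))+\ell(v(D))=\ell(w)$, and $v(D)\in\Sc_n$ because the staircase block has no box indexed $0$, so $s_0$ never appears in $\word_z(D)$. As already observed in the text, $\xx^{\alpha(D)}$ is then $x$-admissible for $\word_x(D)$, $\zz^{\beta(D)}$ is $z$-admissible for $\word_z(D)$, and $\xx^{\alpha(D)}$ involves only $x_1,\dots,x_k$. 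Hence $D\mapsto(u(D),v(D),\word_x(D),\word_z(D),\xx^{\alpha(D)},\zz^{\beta(D)})$ maps $\PD_{\Bc_n}^k(w)$ into the index set of the middle sum.

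Finally I would invoke the Proposition: for each datum $(u,v,\aaa,\bb,\xx^\alpha,\zz^\beta)$ in that index set there are exactly $2^{i(\alpha)-s(w)}$ reduced $b$-signed pipe dreams with the prescribed word data and monomial. Thus the map above is onto the index set, every fibre over such a datum has size $2^{i(\alpha)-s(w)}$, and all pipe dreams in a fixed fibre contribute the same monomial $\xx^\alpha\zz^\beta$; summing over $\PD_{\Bc_n}^k(w)$ therefore reproduces the middle sum, which by the first step equals $\Bf_w^{[k]}(\zz,\xx)$.

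The substantive content is entirely in the Proposition, and within it the one genuinely new feature relative to type $C$: in the base $B^k_{\Bc_n}$ the vertical and horizontal strips of a $\Gamma$-block overlap in the single corner box indexed by $0$, so when the minimal letter of a maximal constant run of $x$-indices is $s_0$, its faucet is forced into that corner, collapsing the binary ``horizontal-or-vertical'' choice that produced a factor $2$ in type $C$ --- and this is precisely what changes the exponent from $i(\alpha)$ to $i(\alpha)-s(w)$. I expect the block-by-block placement analysis (exactly one legal placement for a run whose minimum is $s_0$, the other letters of that run still forced onto the correct side, reducedness preserved) to be the only step requiring genuine care; since the Proposition already takes care of it, the theorem itself reduces to the bookkeeping above.
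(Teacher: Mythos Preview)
Your proposal is correct and follows essentially the same approach as the paper: the theorem is stated as an immediate consequence of the counting Proposition together with the formula for $\Bf_w$ displayed at the beginning of the subsection, exactly as Theorem~\ref{cdreams} follows from its analogous Proposition in type~$C$. The paper does not write out a separate proof for Theorem~\ref{bdreams} beyond the sentence ``Then the following theorem holds,'' and your bookkeeping (splitting $\word(D)$ into $\word_x(D)$ and $\word_z(D)$, checking admissibility, invoking the Proposition for the fibre count) is precisely what is implicit there.
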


\subsection{Type $D$}\label{ssec:maind}


The base $B_{\Dc_n}^k$ consists of a staircase block with $n-1$ stairs and  $k$ $\Gamma$-blocks of height and width $n-1$. We join the blocks by elbows $\elbows$. In each box of the staircase we write the number of its diagonal. The upper left corner of each $\Gamma$-block is indexed by the symbol $1'$, while the remaining boxes are indexed by the integers $2,\dots,n$ from top to bottom and from left to right (cf.~Fig.~\ref{D-base}).

\begin{figure}[ht]
	$$\includegraphics[width=0.8\textwidth]{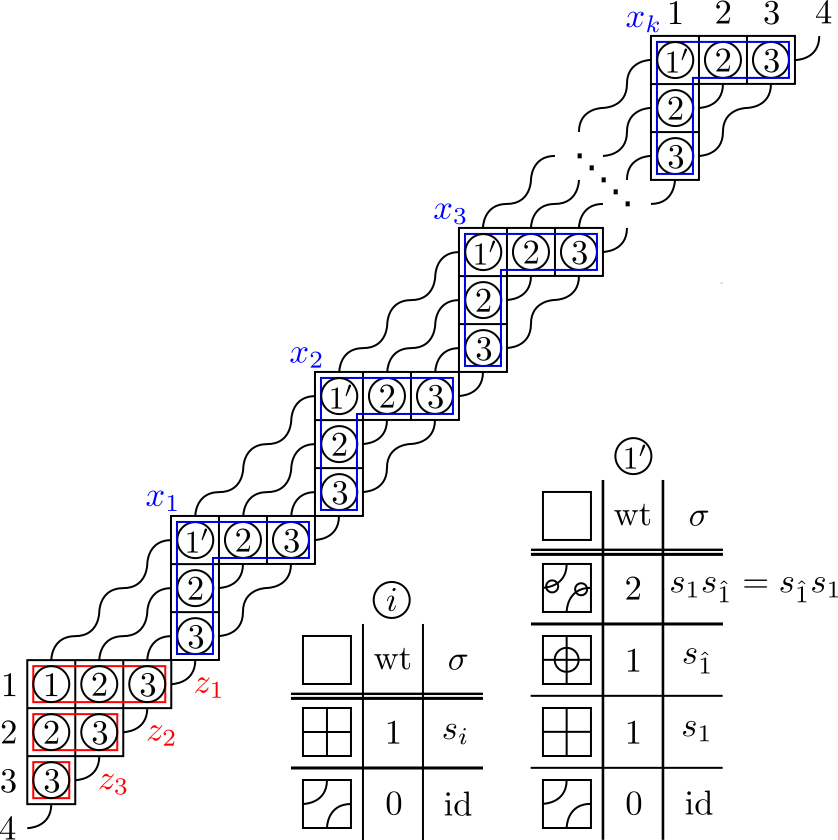}$$
	\caption{The base  $B^k_{\Dc_4}$ for $d$-signed pipe dreams}
	\label{D-base}
\end{figure}

Let us fill the boxes indexed by $1'$ by elements of four possible types: elbows $\elbows$, crosses $\cross$, crosses with a faucet $\crosssign$ and elbows with two faucets $\elbowssign$. All the remaining boxes will be filled by crosses $\cross$ and elbows $\elbows$. The object we obtain will be called a \emph{$d$-signed pipe dream}. As in the previous cases, it consists of strands that connect the left edge with the top one. Let us index these edges by the integers from $1$ to $n$. To each box $\boxx$ of the base we assign the following data:
\begin{itemize}
	\item the weight $\wt(\boxx)$ of the element inside it. It is equal to 2 for elbows with two faucets $\elbowssign$, 1 for crosses $\cross$ and crosses with a faucet $\crosssign$, and 0 for elbows $\elbows$.
	\item zero, one or two letters $\sigma(\boxx) \in \BCc_n$. This set of letters is equal to $s_i$ for a cross $\cross$ in a box indexed by $i$, to $s_1$ for a cross $\cross$ in a box indexed by $1'$, to $s_{\hat{1}}$ for a cross with a faucet $\crosssign$, to a pair of letters $s_1s_{\hat 1}=s_{\hat 1}s_1$  for an elbow with two crosses $\elbowssign$ and nothing (the identity permutation $\id$) for elbows $\elbows$.
	\item a variable $\var(\boxx)$. It is equal to $z_i$ for a box in the  $i$-th row of the staircase (counted from above) and to $x_i$ for a box in the  $i$-th $\Gamma$-block (counted from below).
\end{itemize}

To each $d$-signed pipe dream $D$ we assign:
\begin{itemize}
	\item a monomial $\xx^{\alpha(D)}\zz^{\beta(D)}=\prod_{\boxx\in B^k_{\Dc_n}}\var(\boxx)^{\wt(\boxx)}$;
	\item the number $r(D)$ of elbow joints with two faucets occuring in $D$;
	\item a set of words $\word^1(D), \word^2(D),\ldots,\word^{2^{r(D)}}(D)$. Each of these words is obtained by consecutive reading of all the letters  $\sigma(\boxx)$, where $\boxx$ runs over the set of all boxes of the base from right to left, top to bottom. For each elbow with two faucets there are two possibilities to read the pair of commuting letters  $s_1, s_{\hat 1}$, so the total number of words obtained is equal to $2^{r(D)}$.
	\item Each of these words $\word^p(D)$ can be split into two subwords $\word^p_x(D)$ and $\word_z(D)$ (the latter word is the same for all $p$). The first word is obtained by reading the letters of $\Gamma$-blocks, and the second one corresponds to the staircase block.
	\item If $\word^p(D)$ (and hence all the other words) is a reduced word for the permutation $w=w(D)\in\Dc_n$, the pipe dream  $D$ is said to be \emph{reduced}, and the permutation $w$ is called the \emph{shape} of this pipe dream. 
	\item If $\word^p(D)$ is a reduced word for $w\in\Dc_n$, then $\word^p_x(D)$ and $\word_z(D)$ are reduced words for the permutations $u=u(D)\in\Dc_n$ and $v=v(D)\in\Sc_n$ respectively. Note that $uv=w$ and $\ell(u)+\ell(v)=\ell(w)$.
\end{itemize}

\begin{remark} Let us restate the definition of a reduced $d$-signed pipe dream in terms of the intersection of the strands; it is somewhat more involved than in the previous cases. Consider two strands that intersect twice, in the boxes  $a$ and $b$. A $d$-signed pipe dream is \emph{nonreduced} if it contains any of the eight patterns shown on Fig.~\ref{D-forbidden} (the boxes  $a$ and $b$ are highlighted by grey frames):
\begin{itemize}
	\item the boxes $a$ and $b$ contain crosses $\cross$, and the segments of both strands between the intersection do not have faucets on them;
	\item the boxes $a$ and $b$ contain crosses $\cross$, and the segments of both strands between the intersection have faucets on them;
	\item the boxes $a$ and $b$ contain crosses with faucets $\crosssign$, and the segments of both strands between the intersection do not have faucets on them;
	\item the boxes $a$ and $b$ contain crosses with faucets $\crosssign$, and the segments of both strands between the intersection have faucets on them;
	\item one of the boxes $a,b$ contains a cross $\cross$, and the other contains a cross with a faucet $\crosssign$. Exactly one of the two strands has a faucet between these two boxes;
	\item one of the boxes $a,b$ contains a cross $\cross$, and the other contains an elbow with two faucets $\elbowssign$;
	\item  one of the boxes $a,b$ contains a cross with a faucet $\crosssign$, and the other contains an elbow with two faucets $\elbowssign$;
	\item each of the boxes $a$ and $b$ contains an elbow with two faucets.
\end{itemize}

In the last three cases the existence of an extra faucet on the segments situated between $a$ and $b$ is not important.
	\begin{figure}[ht]
		\includegraphics{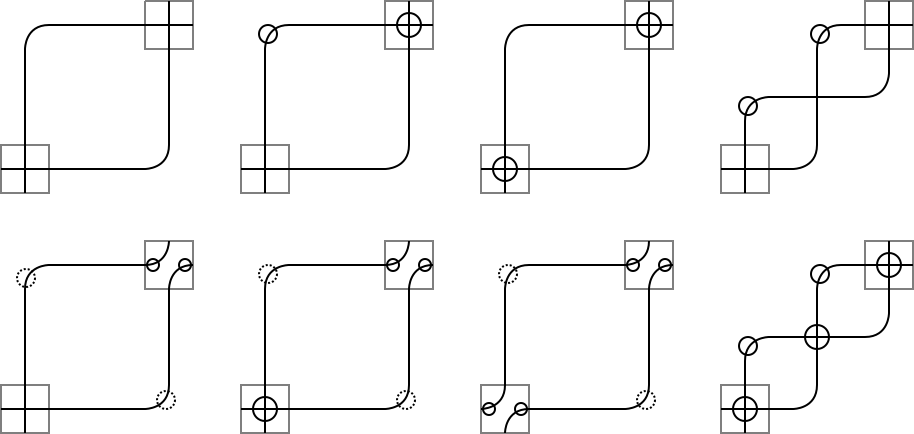}
		\caption{Forbidden patterns not occuring in the reduced $d$-signed pipe dreams (dashed faucets may either be or not be there)}
		\label{D-forbidden}
	\end{figure}
If a $d$-signed pipe dream does not contain any of these eight ``forbidden'' patters, such a pipe dream is \emph{reduced}.

The shape of a reduced $d$-signed pipe dream $D$ can be read in a similar way as in the cases $B$ and $C$: it is the permutation $w=w(D)\in\Dc_n$ such that for each $i=1,\ldots,n$ the number   $i$ on the left-hand side is joined with $(-1)^{c_i(D)}w(i)$ on the top side, where  $c_i(D)$ is the number of crosses on the $i$-th strand.
\end{remark}

\begin{example}
	Here are two examples of $d$-signed pipe dreams of size $n=4$ with $k=2$ $\Gamma$-blocks.\vspace{0.5cm}
	
	\begin{minipage}{\textwidth}
		\begin{minipage}{0.49\textwidth}
			$$\begingroup
			\setlength\arraycolsep{-0.3pt}
			\renewcommand{\arraystretch}{0.0}
			D_1=\begin{matrix}
			&&&&&&&1&2&3&4\\[2pt]
			&&&&&&\elbowup&\belbowssign&\bcross&\belbows&\elbow\\
			&&&&&\elbowup&\elbows&\belbows&\elbows&\elbow\\
			&&&&\elbowup&\elbows&\elbows&\belbows&\elbow\\
			&&&\elbowup&\bcrosssign&\bcross&\bcross&\elbow\\
			&&\elbowup&\elbows&\belbows&\elbows&\elbow\\
			&\elbowup&\elbows&\elbows&\belbows&\elbow\\
			1\,&\bcross&\belbows&\belbows&\elbow\\
			2\,&\belbows&\belbows&\elbow\\
			3\,&\bcross&\elbow\\
			4\,&\elbow
			\end{matrix}
			\endgroup$$
			$$\xx^{\alpha(D_1)}\zz^{\beta(D_1)}=z_1z_3x_1^3x_2^3$$
			$$\word_1(D_1)=s_2s_1s_{\hat 1}s_3s_2s_{\hat 1} s_1 s_3$$
			$$\word_2(D_1)=s_2s_{\hat 1}s_1s_3s_2s_{\hat 1} s_1 s_3$$
			$$w(D_1)=1\overline 4 2 \overline 3\in\Dc_4$$
		\end{minipage}
		\begin{minipage}{0.49\textwidth}
			$$\begingroup
			\setlength\arraycolsep{-0.3pt}
			\renewcommand{\arraystretch}{0.0}
			D_2=\begin{matrix}
			&&&&&&&1&2&3&4\\[2pt]
			&&&&&&\elbowup&\bcrosssign&\bcross&\belbows&\elbow\\
			&&&&&\elbowup&\elbows&\bcross&\elbows&\elbow\\
			&&&&\elbowup&\elbows&\elbows&\belbows&\elbow\\
			&&&\elbowup&\belbowssign&\belbows&\belbows&\elbow\\
			&&\elbowup&\elbows&\belbows&\elbows&\elbow\\
			&\elbowup&\elbows&\elbows&\belbows&\elbow\\
			1\,&\bcross&\bcross&\belbows&\elbow\\
			2\,&\belbows&\belbows&\elbow\\
			3\,&\bcross&\elbow\\
			4\,&\elbow
			\end{matrix}
			\endgroup$$
			$$\xx^{\alpha(D_2)}\zz^{\beta(D_2)}=z_1^2z_3x_1^2x_2^3$$
			$$\word_1(D_2)=s_2s_{\hat 1}s_2 s_1 s_{\hat 1} s_2 s_1 s_3$$
			$$\word_2(D_2)=s_2s_{\hat 1}s_2 s_{\hat 1} s_1 s_2 s_1 s_3$$
			\begin{center}
				$D_2$ is non-reduced
			\end{center}
		\end{minipage}
	\end{minipage}	
\end{example}

Similarly to the previous cases we can show that if $D$ is a reduced $d$-signed pipe dream, the monomial  $\zz^{\beta(D)}$ is $z$-admissible for  $\word_z(D)$,  and the monomial $\xx^{\alpha(D)}$ is  $x$-admissible for each of the words $\word^1_x(D),\ldots,\word^{2^{r(D)}}_x(D)$.

\begin{proposition}
Let $w\in\Dc_n$ be decomposed into a product $uv=w$, where $v\in\Sc_n$ and $\ell(u)+\ell(v)=\ell(w)$. Let $\aaa=s_{a_1}s_{a_2}\ldots s_{a_l} \in R(u)$ and $\bb\in R(v)$ be reduced words, and let $\xx^\alpha=x_{j_1}x_{j_2}\ldots x_{j_l}\in\Ar_x(\aaa)$ and $\zz^\beta\in\Ar_z(\bb)$ be admissible monomials, and $k\geq j_1\geq j_2\ldots\geq j_l$.
	
Let $r(\aaa, \alpha)$ denote the number of  $i$ such that $j_i=j_{i+1}, a_i=1, a_{i+1}=\hat 1$ or, vice versa, $a_i=\hat 1, a_{i+1}=1$.
	
Then there exist exactly $2^{i(\alpha)-o(\aaa)+r(\aaa, \alpha)}$ reduced $d$-signed pipe dreams $D$ such that  $\word^p_x(D)=\aaa$ for some $p$, $\word_z(D)=\bb$, $\xx^{\alpha(D)}=\xx^\alpha$, and $\zz^{\beta(D)}=\zz^{\beta}$. For each of these pipe dreams $D$ we have $r(D)=r(\aaa, \alpha)$.
\end{proposition}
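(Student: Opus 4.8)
The plan is to adapt the counting arguments used for types $C$ and $B$, but now tracking carefully the interaction between the two generators $s_1$ and $s_{\hat 1}$, which are the source of the extra subtlety in type $D$. As before, the placement of crosses in the staircase block is uniquely determined by $\bb$ and $\zz^\beta$, so the whole count reduces to counting placements of significant elements inside the $\Gamma$-blocks. So fix one maximal run $j_i = j_{i+1} = \dots = j_{i+m}$ of indices of $\aaa$ landing in the same ($j_i$-th) $\Gamma$-block; as in the previous proofs, $x$-admissibility forces the subword $a_i, a_{i+1}, \dots, a_{i+m}$ to first strictly decrease and then strictly increase (no peaks, no immediate repeats). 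The key new phenomenon is that the minimum of this run need not be a single letter: since $s_1$ and $s_{\hat 1}$ commute and, under the convention $\hat 1 = 1$ for arithmetic, both ``equal'' $1$, the ``valley'' of the run can consist of two letters $\{1, \hat 1\}$ in either order. So I would distinguish two cases for each run: (a) the minimal value is attained by a single letter $a_{i+t}$, or (b) it is attained by an adjacent pair $a_{i+t}, a_{i+t+1} \in \{s_1, s_{\hat 1}\}$ (in one of the two orders).

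In case (a), exactly as in type $C$: the letters before the valley must go in the horizontal arm, the letters after must go in the vertical arm, and the valley letter $a_{i+t}$ can go in either the horizontal or the vertical arm — giving a factor of $2$ for this block, unless $a_{i+t} = \hat 1$... but wait, here one must be careful: a lone $s_{\hat 1}$ is realized by a cross-with-faucet $\crosssign$ in the $1'$ box, a lone $s_1$ by a cross $\cross$ in the $1'$ box (or in an ordinary box if it is not the leftmost/topmost). I would spell out which box of the $\Gamma$-block realizes which letter: ordinary crosses sit in boxes indexed $2, \dots, n$; the $1'$ box is the one that can carry $\cross$, $\crosssign$, or $\elbowssign$. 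So a run whose valley is a single $s_1$ or single $s_{\hat 1}$ still contributes a factor $2$ (valley in the $1'$ box of the horizontal arm versus the $1'$ box of the vertical arm — these are the same physical box only if the arms meet there, so one must check the geometry of $B_{\Dc_n}^k$; if the $1'$ box is shared, then there is only one placement and the factor is $1$). This is exactly where the exponent $o(\aaa)$ enters: each occurrence of $s_1$ or $s_{\hat 1}$ as a valley that is forced into the unique $1'$ position kills a factor of $2$, which matches the $-o(\aaa)$ in the Stanley function $E_u$.

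In case (b) — the valley is a commuting pair $\{s_1, s_{\hat 1}\}$ — the two letters together can be realized by a single elbow-with-two-faucets $\elbowssign$ in the $1'$ box (this accounts for both orderings of the pair simultaneously, hence the $2^{r(D)}$ readings and the identity $r(D) = r(\aaa, \alpha)$), and this placement of the pair is, I expect, the \emph{only} admissible one: neither letter of the pair can be pushed to an ordinary box, and they cannot be split between the two arms without creating a forbidden pattern from Figure~\ref{D-forbidden}. So each such pair-valley run contributes a factor $1$ to the placement count, but the pair's presence doubles the number of words $\word^p$, and in the bookkeeping this is precisely captured by: losing two would-be factors of $2$ (one per letter, i.e. $-o$ contribution of $2$) and regaining one factor of $2$ via $r(\aaa, \alpha)$. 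I would verify this arithmetic run-by-run and then multiply over all runs: writing $i(\alpha)$ as the number of blocks hit, and accounting in each block for its valley type, the product of local factors telescopes to $2^{i(\alpha) - o(\aaa) + r(\aaa,\alpha)}$, which is the claim. The final sentence $r(D) = r(\aaa,\alpha)$ then follows because each pair-valley run produces exactly one $\elbowssign$ and no other mechanism produces one (an $\elbowssign$ forces its two letters to be a commuting valley pair in whichever word-reading you choose).

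The main obstacle I anticipate is the careful case analysis at the valley of each run, and in particular making airtight the claim in case (b) that the only legal placement of a commuting $\{s_1, s_{\hat 1}\}$ valley is a single $\elbowssign$ in the $1'$ box — one must rule out, via the forbidden-pattern list, every alternative (splitting the pair across the two arms, placing one of them as an ordinary cross, etc.), and simultaneously check that this $\elbowssign$ placement is consistent with the forced horizontal/vertical placement of the strictly-monotone parts of the run on either side of it. A secondary subtlety is the precise geometry of the $1'$ corner in $B_{\Dc_n}^k$ (whether the horizontal and vertical arms of a $\Gamma$-block share the $1'$ box or not), since that determines whether a lone $s_1$ or $s_{\hat 1}$ valley contributes a factor $2$ or $1$ — this is exactly the combinatorial meaning of $o(\aaa)$ and must be matched against the definition of $E_w$ in Theorem~\ref{thm:dschub}. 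Once these local counts are pinned down, assembling the global product and comparing with $2^{i(\alpha) - o(\aaa)}$ from $E_u$ together with the $r(D)$ readings is routine.
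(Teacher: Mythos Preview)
Your approach is essentially the same as the paper's: factor the count over $\Gamma$-blocks and, for each block, do a case analysis on which of $\{s_1, s_{\hat 1}\}$ appear in that block's subword. The paper's version is cleaner because it resolves upfront the geometric point you leave as ``to be checked'': in $B^k_{\Dc_n}$, each $\Gamma$-block has a \emph{single} corner box indexed $1'$, shared between the horizontal and vertical arms; every other box of the $\Gamma$-block is indexed by some integer $\geq 2$. Consequently, an $s_1$ in a $\Gamma$-block can \emph{only} be realized as a $\cross$ in that $1'$ box --- your parenthetical ``or in an ordinary box if it is not the leftmost/topmost'' is simply wrong --- and likewise $s_{\hat 1}$ only as $\crosssign$ there. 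With this clarified, the paper's four cases are immediate: if the block's subword contains neither $1$ nor $\hat 1$, the valley letter (some $s_m$, $m\geq 2$) can sit in either arm, giving a factor $2$; if it contains exactly one of $\{1,\hat 1\}$, that letter occupies the unique $1'$ box and everything else is forced, giving a factor $1$; if it contains both, the $1'$ box carries $\elbowssign$ and again everything is forced, giving a factor $1$. There is no need to invoke the forbidden-pattern list to rule out alternative placements in the last case --- the placement is dictated purely by the box labeling, not by reducedness. Once you commit to the geometry of the $1'$ corner, your hedged discussion collapses to exactly the paper's four-line argument.
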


\begin{proof} Similarly to the previous cases we can show that there exists a unique way to place the crosses in the staircase block. We need to find the number of ways of placing the elements in each of the $\Gamma$-blocks. Let $j_i=j_{i+1}=\ldots=j_{i+m}$. We distinguish between the following cases:
	\begin{itemize}
		\item the sequence $a_i,\ldots, a_{i+m}$ does not contain elements equal to 1 or $\hat 1$. There are two possibilities for putting the cross corresponding to the smallest element $a_{i+t}$ of the sequence: it can be located in the horizontal or in the vertical part of the block. The remaining elements are placed in a unique way.
		\item The sequence  $a_i,\ldots, a_{i+m}$ contains  1, but does not contain $\hat 1$. Then the top left corner of the $j_i$-th block contains a cross $\cross$; all the remaining elements are placed uniquely.
		\item The sequence $a_i,\ldots, a_{i+m}$ contains  $\hat 1$ and does not contain 1. Then we put a cross with a faucet $\crosssign$ into the corner of the $j_i$-th block; the remaining elements are placed uniquely.
		\item The sequence $a_i,\ldots, a_{i+m}$ contains both $\hat 1$ and 1. Then we place an elbow with two faucets $\elbowssign$ into the corner of the $j_i$-th block; the positions of the remaining elements are determined uniquely.
	\end{itemize}
This means that for  $i(\alpha)-o(\aaa)+r(\aaa, \alpha)$ blocks there are two possibilities to place the elements, and all the remaining blocks are filled uniquely. This concludes the proof.
\end{proof}

Denote by $\PD_{\Dc_n}^k(w)$ the set of all reduced $d$-signed pipe dreams with $k$ $\Gamma$-blocks of shape $w\in \Dc_n$. Since each pipe dream  $D$ corresponds to $2^{r(D)}$ pairs $(\aaa,\xx^\alpha)$, where $\aaa$ is a reduced word, and  $\xx^\alpha$ is an admissible monomial,  each such pair corresponds to $2^{i(\alpha)-o(\aaa)+r(\aaa, \alpha)}$ pipe dreams, and $r(D)=r(\aaa,\alpha)$, this gives us the following theorem.

\begin{theorem}\label{ddreams}
Let $w\in \Dc_n$ and $k\geq 0$. Then the $k$-truncation $\Df_w^{[k]}(\zz,\xx)$  of the  Schubert polynomial $\Df_w(\zz,\xx)$ is equal to the sum of monomials over all reduced $d$-signed pipe dreams in $\PD_{\Dc_n}^k(w)$:
\[
\sum_{D\in\PD_{\Dc_n}^k(w)}\xx^{\alpha(D)}\zz^{\beta(D)}=\sum_{\substack{u v=w\\ \ell(u)+\ell(v)=\ell(w) \\ v \in \Sc_{n}}} \sum_{\substack{\aaa \in R(u)\\ \xx^{\alpha} \in \Ar_{x}(\mathbf{a})\\\alpha_{k+1}=\alpha_{k+2}=\ldots=0}} \sum_{\substack{\bb \in R(v)\\ \zz^{\beta} \in \Ar_{z}(\bb)}} 2^{i(\alpha)-o(\aaa)} \xx^{\alpha} \zz^{\beta}=\Df_w^{[k]}(\zz,\xx).
\]
\end{theorem}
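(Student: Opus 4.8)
The statement to prove is Theorem~\ref{ddreams}, the type $D$ analogue of Theorems~\ref{cdreams} and~\ref{bdreams}, asserting that the generating function of reduced $d$-signed pipe dreams in $\PD_{\Dc_n}^k(w)$ equals the $k$-truncation $\Df_w^{[k]}(\zz,\xx)$. The plan is to deduce this by combining the previous Proposition with the Billey--Haiman formula for $\Df_w$ recorded in Theorem~\ref{thm:dschub}, exactly in the spirit of how Theorem~\ref{cdreams} follows from Theorem~\ref{thm:cschub}. Concretely, I would start from the right-hand expression
\[
\Df_w^{[k]}(\zz,\xx)=\sum_{\substack{uv=w\\ \ell(u)+\ell(v)=\ell(w)\\ v\in\Sc_n}}\ \sum_{\substack{\aaa\in R(u)\\ \xx^\alpha\in\Ar_x(\aaa)\\ \alpha_{k+1}=\alpha_{k+2}=\cdots=0}}\ \sum_{\substack{\bb\in R(v)\\ \zz^\beta\in\Ar_z(\bb)}} 2^{i(\alpha)-o(\aaa)}\,\xx^\alpha\zz^\beta,
\]
which is Theorem~\ref{thm:dschub} with the symmetric functions $p_k(\xx)$ truncated after the first $k$ variables (the truncation forces precisely the condition $\alpha_{k+1}=\alpha_{k+2}=\cdots=0$ on the admissible monomials, since a monomial with $\alpha_j\neq 0$ for some $j>k$ specializes to $0$).

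The core of the argument is a change of summation: instead of summing over triples $(u,v,\aaa,\bb,\alpha,\beta)$, I would sum over reduced $d$-signed pipe dreams $D\in\PD_{\Dc_n}^k(w)$. The Proposition preceding the theorem is exactly the bookkeeping device for this: it says that for a fixed decomposition $uv=w$, fixed reduced words $\aaa\in R(u)$, $\bb\in R(v)$, and fixed admissible monomials $\xx^\alpha\in\Ar_x(\aaa)$, $\zz^\beta\in\Ar_z(\bb)$, there are exactly $2^{\,i(\alpha)-o(\aaa)+r(\aaa,\alpha)}$ reduced $d$-signed pipe dreams $D$ with $\word_x^p(D)=\aaa$ for some $p$, $\word_z(D)=\bb$, $\xx^{\alpha(D)}=\xx^\alpha$, $\zz^{\beta(D)}=\zz^\beta$, and that each such $D$ has $r(D)=r(\aaa,\alpha)$. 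The only subtlety relative to the $B$ and $C$ cases is that a single $d$-signed pipe dream $D$ does not determine a unique reduced word but rather $2^{r(D)}$ of them (one for each way of resolving the commuting pair $s_1s_{\hat1}$ at each double-faucet elbow). I would therefore organize the count as follows: summing the monomial $\xx^{\alpha(D)}\zz^{\beta(D)}$ over all $D$ is the same as summing over all pairs $(\aaa,\alpha)$ (with $\bb,\beta$) the quantity $2^{\,i(\alpha)-o(\aaa)+r(\aaa,\alpha)}$ (pipe dreams per pair), but each pipe dream is counted $2^{r(D)}=2^{r(\aaa,\alpha)}$ times because it arises from $2^{r(\aaa,\alpha)}$ distinct pairs $(\aaa,\alpha)$. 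Dividing, the net multiplicity of the monomial $\xx^\alpha\zz^\beta$ coming from a fixed underlying data set collapses to $2^{\,i(\alpha)-o(\aaa)}$, matching the coefficient in Theorem~\ref{thm:dschub} term by term; summing over all $(u,v,\aaa,\bb,\alpha,\beta)$ then yields $\Df_w^{[k]}$.

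Two auxiliary points need to be checked to make this rigorous, though each is routine. First, the well-definedness of the shape map $D\mapsto w(D)$: one must verify that when some (equivalently, every, by the commutation relation $s_1s_{\hat1}=s_{\hat1}s_1$) word $\word^p(D)$ is reduced for $w$, then all $2^{r(D)}$ words $\word^p(D)$ represent the same $w\in\Dc_n$ and are all reduced --- this is immediate since they differ only by swapping adjacent commuting generators. Second, that the map $(\aaa,\bb,\alpha,\beta)\mapsto D$ is indeed ``onto'' in the sense that every reduced $d$-signed pipe dream of shape $w$ arises from some admissible data --- this is the statement, established as in the type $A$ case in Section~\ref{ssec:pdtypea} and reused for types $B,C$, that $\zz^{\beta(D)}$ is $z$-admissible for $\word_z(D)$ and $\xx^{\alpha(D)}$ is $x$-admissible for each $\word_x^p(D)$, already noted in the paragraph preceding the Proposition. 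Finally one invokes $\word_z(D)\in R(v(D))$, $\word_x^p(D)\in R(u(D))$ with $u(D)v(D)=w(D)$ and $\ell(u(D))+\ell(v(D))=\ell(w(D))$, so the triple sum on the right of Theorem~\ref{thm:dschub} is indexed exactly by the images of pipe dreams. The main obstacle --- such as it is --- is purely organizational: keeping straight the factor $2^{r(D)}$ of overcounting introduced by the multivaluedness of $\word^p(D)$, and making sure it exactly cancels the extra $2^{r(\aaa,\alpha)}$ in the Proposition's count, so that the final exponent is $i(\alpha)-o(\aaa)$ and not $i(\alpha)-o(\aaa)\pm r$. Once this cancellation is displayed cleanly, the theorem follows immediately from Theorem~\ref{thm:dschub}.
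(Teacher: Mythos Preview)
Your proposal is correct and follows essentially the same approach as the paper: combine the preceding Proposition with Theorem~\ref{thm:dschub}, using the double-counting observation that each pipe dream $D$ corresponds to $2^{r(D)}$ pairs $(\aaa,\xx^\alpha)$ while each such pair corresponds to $2^{i(\alpha)-o(\aaa)+r(\aaa,\alpha)}$ pipe dreams with $r(D)=r(\aaa,\alpha)$, so that the net coefficient is $2^{i(\alpha)-o(\aaa)}$. The paper's proof is exactly this one-sentence bookkeeping, and your identification of the $2^{r}$ cancellation as the only subtlety beyond the $B$/$C$ cases is spot on.
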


\subsection{Infinite pipe dreams}\label{ssec:infinite}

Let  $\Fc_n$ denote either $\BCc_n$ or $\Dc_n$. Denote by $\Ff_w(\xx,\zz)$ the Schubert polynomial of type $B$, $C$ or $D$ respectively for $w\in\Fc_n$.

We can consider bases of pipe dreams with countably many $\Gamma$-blocks, going infinitely in the northeastern direction:
$$
B_{\Fc_n}:=\bigcup_{k\in \NN}B^k_{\Fc_n}.
$$
These bases can be filled with elements according to the same rules as in the finite case. Reducibility is defined the same way as above. Note that if a  signed pipe dream $D$ of arbitrary type is reduced, it contains finitely many significant elements. This means that for a reduced infinite pipe dreams  $D$ the monomial $\xx^{\alpha(D)}\zz^{\beta(D)}$ and the shape  $w=w(D)\in\Fc_n$ are well defined.

Denote by $\PD_{\Fc_n}(w)$ the set of reduced infinite signed pipe dreams of shape $w\in\Fc_n$:
$$
\PD_{\Fc_n}(w)=\bigcup_{k\in \NN}\PD^k_{\Fc_n}(w)
$$
(finite pipe dreams are assumed to have an infinite ``tail'' of elbows $\elbows$). Since Schubert polynomials equal the projective limits of their truncations
$$
\Ff_w(\xx,\zz)=\varprojlim_{k\to\infty}\Ff^{[k]}_w(\xx,\zz),
$$
they are equal to the sums of monomials over infinite signed pipe dreams:
\begin{corollary}\label{infinite}
	For each $w\in \Fc_n$, we have
\[
\Ff_w(\xx,\zz)=\sum_{D\in\PD_{\Fc_n}(w)}\xx^{\alpha(D)}\zz^{\beta(D)}.
\]
\end{corollary}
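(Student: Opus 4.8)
The plan is to deduce the corollary from the finite-truncation results (Theorems~\ref{bdreams}, \ref{cdreams}, \ref{ddreams}) by a straightforward passage to the limit, using that $\Ff_w$ is the projective limit of its truncations $\Ff_w^{[k]}$ in two ways at once: as a Schubert polynomial and combinatorially. First I would make precise the identity $\PD_{\Fc_n}(w)=\bigcup_k\PD^k_{\Fc_n}(w)$ as an \emph{increasing} union. The base $B_{\Fc_n}=\bigcup_k B^k_{\Fc_n}$ is an increasing union of bases, and filling the $(k+1)$-st $\Gamma$-block entirely with elbow joints identifies each filling of $B^k_{\Fc_n}$ with a filling of $B^{k+1}_{\Fc_n}$ having the same word(s), the same shape, and the same monomial; this gives $\PD^k_{\Fc_n}(w)\subseteq\PD^{k+1}_{\Fc_n}(w)$. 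Conversely, a reduced infinite pipe dream has only finitely many significant elements, since any of its words is reduced and hence of length $\ell(w)$; so all its significant elements lie in the first $k$ $\Gamma$-blocks for $k$ large, and the pipe dream is the elbow-extension of its restriction to $B^k_{\Fc_n}$. Thus $\PD_{\Fc_n}(w)$ is exactly this increasing union, on which $\xx^{\alpha(D)}\zz^{\beta(D)}$ and $w(D)$ are well defined, as already noted in the text.

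Next I would compare coefficients of a fixed monomial $m=\xx^{\alpha}\zz^{\beta}$ on the two sides. Every $\xx^{\alpha(D)}\zz^{\beta(D)}$ with $D$ reduced is homogeneous of degree $\ell(w)$ (the total weight of the significant elements equals the length of the word), so both sides are homogeneous power series of degree $\ell(w)$, and it suffices to match coefficients of individual monomials. The key stabilization observation is this: a significant element placed in the $(k+1)$-st $\Gamma$-block necessarily carries the variable $x_{k+1}$, so a reduced pipe dream in $\PD^{k+1}_{\Fc_n}(w)$ either lies already in $\PD^{k}_{\Fc_n}(w)$ or has $x_{k+1}\mid\xx^{\alpha(D)}$; iterating, for $k'\ge k$ the pipe dreams of $\PD^{k'}_{\Fc_n}(w)$ whose monomial involves only $x_1,\dots,x_k$ are precisely those of $\PD^{k}_{\Fc_n}(w)$. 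Now choose $k$ so large that $m\in\QQ[z_1,z_2,\dots,x_1,\dots,x_k]$. Then the number of $D\in\PD_{\Fc_n}(w)$ with $\xx^{\alpha(D)}\zz^{\beta(D)}=m$ equals the number of such $D$ in $\PD^{k}_{\Fc_n}(w)$, which by the relevant one of Theorems~\ref{bdreams}, \ref{cdreams}, \ref{ddreams} is the coefficient of $m$ in $\Ff^{[k]}_w(\zz,\xx)$. On the other hand $\Ff^{[k]}_w$ is obtained from $\Ff_w\in\QQ[\zz,p_1(\xx),p_3(\xx),\dots]$ by setting $x_{k+1}=x_{k+2}=\dots=0$, which leaves the coefficient of a monomial supported on the $z$'s and $x_1,\dots,x_k$ unchanged; so that coefficient equals the coefficient of $m$ in $\Ff_w$. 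As $m$ was arbitrary, the two power series agree, which is exactly $\Ff_w(\xx,\zz)=\sum_{D\in\PD_{\Fc_n}(w)}\xx^{\alpha(D)}\zz^{\beta(D)}$.

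The argument is essentially formal, and there is no real obstacle; the one point deserving care — the ``main obstacle'' such as it is — is the stabilization claim, namely that enlarging the number of $\Gamma$-blocks produces no new reduced pipe dreams carrying an already-present monomial. This rests entirely on the elementary facts that a significant element in the $j$-th $\Gamma$-block contributes the variable $x_j$ and that reducedness forces finitely many significant elements; both are immediate from the constructions in Sections~\ref{ssec:mainc}--\ref{ssec:maind}, so nothing beyond bookkeeping is required.
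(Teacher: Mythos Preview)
Your proposal is correct and follows exactly the paper's approach: the paper simply notes that $\Ff_w=\varprojlim_k\Ff^{[k]}_w$ and that $\PD_{\Fc_n}(w)=\bigcup_k\PD^k_{\Fc_n}(w)$, then invokes Theorems~\ref{bdreams}, \ref{cdreams}, \ref{ddreams}. You have merely spelled out the coefficient-by-coefficient stabilization that the paper leaves implicit.
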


\subsection{Double Schubert polynomials}\label{ssec:double}

Double Schubert polynomials for the classical groups were introduced in~\cite{IkedaMihalceaNaruse11}. They are elements of the ring $\QQ[\zz,p_1(\xx),p_3(\xx),\ldots,\ttt]$, where  $\ttt=t_1,t_2,\ldots$ is another countable set of variables, responsible for the action of a maximal torus of the corresponding classical group.

This ring is equipped with an action of the group $\BCc_\infty\times\BCc_\infty$ and  hence of its index 4 subgroup $\Dc_\infty\times \Dc_\infty$. The first copy of $\BCc_\infty$ acts as before, permuting the variables $z_i$:
\begin{eqnarray*}
s^z_if(z_1,\ldots,z_i,z_{i+1},\ldots;x_1,x_2,\ldots;t_1,t_2,\ldots)&=&f(z_1,\ldots,z_{i+1},z_i,\ldots;x_1,x_2,\ldots;t_1,t_2,\ldots) \;\text{for $i\geq 1$};\\
s^z_0f(z_1,z_2,z_3,\ldots;x_1,x_2,\ldots;t_1,t_2,\ldots)&=&f(-z_1,z_2,z_3\ldots;z_1,x_1,x_2,\ldots;t_1,t_2,\ldots);\\
s^z_{\hat 1}f(z_1,z_2,z_3\ldots;x_1,x_2,\ldots;t_1,t_2,\ldots)&=&f(-z_2,-z_1,z_3\ldots;z_1,z_2,x_1,x_2,\ldots;t_1,t_2,\ldots).
\end{eqnarray*}	
(recall that $s_{\hat 1}=s_0s_1s_0$).

The second copy of $\BCc_\infty$ acts by permuting the variables $t_i$:
\begin{eqnarray*}
s^t_if(z_1, z_2\ldots;x_1,x_2,\ldots;t_1, \ldots, t_i, t_{i+1},\ldots)&=&f(z_1, z_2\ldots;x_1,x_2,\ldots;t_1, \ldots, t_{i+1}, t_i,\ldots)\;\text{for $i\geq 1$};\\
s^t_0f(z_1,z_2,\ldots;x_1,x_2,\ldots;t_1,t_2,t_3,\ldots)&=&f(z_1,z_2,\ldots;-t_1,x_1,x_2,\ldots;-t_1,t_2,t_3,\ldots);\\
s^t_{\hat 1}f(z_1,z_2,\ldots;x_1,x_2,\ldots;t_1,t_2,t_3\ldots)&=&f(z_1,z_2,z_3\ldots;-t_1,-t_2,x_1,x_2,\ldots;-t_2,-t_1,t_3,\ldots).
\end{eqnarray*}

It is easy to check explicitly that this defines an action, and that the actions of the two copies of $\BCc_\infty$ commute. 

This allows us to define two families of divided difference operators. Following~\cite{IkedaMihalceaNaruse11}, we  call them $\partial_i$ and $\delta_i$.

\begin{definition}[{\cite[Sec.~2.5]{IkedaMihalceaNaruse11}}]
Let us define divided difference operators $\partial$ and $\delta$ acting on the ring  $\QQ[\zz,p_1(\xx),p_3(\xx),\ldots,\ttt]$ as follows:
\[
\partial_{i} f=\frac{f-s^z_i f}{z_{i}-z_{i+1}},\;\;\; \delta_i f=\frac{f-s^t_i f}{t_{i+1}-t_i}
\]
for $i\geq 1$;
\[
\partial_{0} f=\frac{f-s^z_0 f}{-2 z_{1}},\;\;\; \delta_0f=\frac{f-s^t_0 f}{2t_1};
\]
\[
\partial_{0}^{B} f=2\partial_0 f=\frac{f-s^z_0 f}{-z_{1}},\;\;\; \delta^B_0f=2\delta_0 f =\frac{f-s^t_0 f}{t_1};
\]
\[
\partial_{\hat 1} f=\frac{f-s^z_{\hat 1} f}{-z_{1}-z_{2}},\;\;\; \delta_{\hat 1}f=\frac{f-s^t_{\hat 1}f}{t_1+t_2}.
\]
\end{definition}
\begin{definition}
\emph{Double Schubert polynomials}  $\Bf_w(\zz,\xx,\ttt),\Cf_w(\zz,\xx,\ttt)$ and $\Df_w(\zz,\xx,\ttt)$ (as before, we denote by $\Ff_w(\zz,\xx,\ttt)$ any of these three polynomials) are elements of the ring $\QQ[\zz,p_1(\xx),p_3(\xx),\ldots,\ttt]$ that are indexed by the permutations $w\in\BCc_\infty$ or $w\in\Dc_\infty$ and satisfy the equations
\[
\Ff_{id}=1,
\]
\[
	\partial_{i} \Ff_{w}=\begin{cases}
	\Ff_{w s_{i}}, & \text { if } \ell\left(w s_{i}\right)<\ell(w), \\
	0, & \text { if } \ell\left(w s_{i}\right)>\ell(w),
	\end{cases}\;\;\;
	\delta_{i} \Ff_{w}=\begin{cases}
	\Ff_{s_{i}w}, & \text { if } \ell\left(s_{i}w\right)<\ell(w), \\
	0, & \text { if } \ell\left(s_{i}w\right)>\ell(w),
	\end{cases}
\]
	for each $i=1,2,\ldots$,
\[
	\partial_0 \Cf_{w}=\begin{cases}
	\Cf_{w s_{0}}, & \text { if } \ell\left(w s_{0}\right)<\ell(w), \\
	0, & \text { if } \ell\left(w s_{0}\right)>\ell(w),
	\end{cases}\;\;\;
	\delta_{0} \Cf_{w}=\begin{cases}
	\Cf_{s_{0}w}, & \text { if } \ell\left(s_{0}w\right)<\ell(w), \\
	0, & \text { if } \ell\left(s_{0}w\right)>\ell(w),
	\end{cases}
\]
\[
	\partial^B_0 \Bf_{w}=\begin{cases}
	\Bf_{w s_{0}}, & \text { if } \ell\left(w s_{0}\right)<\ell(w), \\
	0, & \text { if } \ell\left(w s_{0}\right)>\ell(w),
	\end{cases}\;\;\;
	\delta^B_{0} \Bf_{w}=\begin{cases}
	\Bf_{s_{0}w}, & \text { if } \ell\left(s_{0}w\right)<\ell(w), \\
	0, & \text { if } \ell\left(s_{0}w\right)>\ell(w),
	\end{cases}
\]
\[
	\partial_{\hat 1} \Df_{w}=\begin{cases}
	\Df_{w s_{\hat 1}}, & \text { if } \ell\left(w s_{\hat 1}\right)<\ell(w), \\
	0, & \text { if } \ell\left(w s_{\hat 1}\right)>\ell(w),
	\end{cases}\;\;\;
	\delta_{\hat 1} \Df_{w}=\begin{cases}
	\Df_{s_{\hat 1}w}, & \text { if } \ell\left(s_{\hat 1}w\right)<\ell(w), \\
	0, & \text { if } \ell\left(s_{\hat 1}w\right)>\ell(w).
	\end{cases}
\]
\end{definition}

\subsection{Double Schubert polynomials as sums over pipe dreams}\label{ssec:doublepipes}

The goal of this subsection is to express double Schubert polynomials of types $B$, $C$, and $D$ as sums over the set of pipe dreams. We start with recalling the following theorem, which expresses double Schubert polynomials via the ordinary ones.

\begin{theorem}[Cauchy expansion formula, {\cite[Cor.~8.10]{IkedaMihalceaNaruse11}}]\label{doubleschub}
Double Schubert polynomials  $\Ff_w(\zz,\xx,\ttt)$ satisfy the equation
\[
	\Ff_{w}(\zz,\xx,\ttt)=\sum_{\substack{vu=w\\\ell(v)+\ell(u)=\ell(w)\\v \in \Sc_n}}\Sf_{v^{-1}}(-\ttt)\Ff_u(\zz,\xx).
\]
\end{theorem}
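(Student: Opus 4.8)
Write $G_w$ for the right-hand side,
\[
G_w \;=\; \sum_{\substack{vu=w\\ \ell(v)+\ell(u)=\ell(w)\\ v\in\Sc_n}} \Sf_{v^{-1}}(-\ttt)\,\Ff_u(\zz,\xx).
\]
The plan is to prove $G_w=\Ff_w$ by showing that $G_w$ satisfies the properties characterising the double Schubert polynomials: the initial condition $\Ff_{\id}=1$ and compatibility with the two families of divided-difference operators $\partial_i$ and $\delta_i$ (that these properties determine the $\Ff_w$ uniquely is due to~\cite{IkedaMihalceaNaruse11}). The base case is immediate, since the only length-additive factorisation of $\id$ with left factor in $\Sc_n$ is $\id\cdot\id$, and $\Sf_{\id}(-\ttt)\Ff_{\id}(\zz,\xx)=1$. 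As a consistency check, putting $\ttt=0$ kills every term with $v\ne\id$ (a Schubert polynomial of positive degree has vanishing constant term), so $G_w|_{\ttt=0}=\Ff_w(\zz,\xx)$, in agreement with Corollary~\ref{infinite}.

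The recursions for the operators $\partial_i$ ($i\ge 1$, and likewise $\partial_0$ in types $B,C$ or $\partial_{\hat 1}$ in type $D$) are the easy part: each such operator acts only on the $\zz$- and $\xx$-variables and fixes every factor $\Sf_{v^{-1}}(-\ttt)$, so $\partial_i G_w=\sum\Sf_{v^{-1}}(-\ttt)\,\partial_i\Ff_u(\zz,\xx)$; one then uses the single-variable recursion $\partial_i\Ff_u=\Ff_{us_i}$ (when $\ell(us_i)<\ell(u)$, and $0$ otherwise) together with the elementary observation that, when $\ell(ws_i)<\ell(w)$, the assignment $(v,u)\mapsto(v,us_i)$ is a bijection from the length-additive factorisations $vu=w$ with $v\in\Sc_n$ and $\ell(us_i)<\ell(u)$ onto all length-additive factorisations of $ws_i$ with left factor in $\Sc_n$ (and when $\ell(ws_i)>\ell(w)$ no summand survives, giving $\partial_i G_w=0$). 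The recursions for $\delta_i$ with $i\ge1$ run along the same lines: such $\delta_i$ touches only the $\ttt$-variables, a short computation identifies $\delta_i\big(\Sf_{v^{-1}}(-\ttt)\big)$ with $\Sf_{(s_iv)^{-1}}(-\ttt)$ when $\ell(s_iv)<\ell(v)$ and with $0$ otherwise, and the relevant bijection is now $(v,u)\mapsto(s_iv,u)$, which keeps the left factor inside $\Sc_n$ precisely because $s_i\in\Sc_n$ for $i\ge1$.

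The real obstacle is the recursion for $\delta_0$ (types $B$, $C$) and for $\delta_{\hat 1}$ (type $D$). Here the reflection $s^t_0$ does \emph{not} act trivially on the $\xx$-alphabet — it prepends the extra variable $-t_1$ to $\xx$ — so $\delta_0$ no longer passes through the product $\Sf_{v^{-1}}(-\ttt)\cdot\Ff_u(\zz,\xx)$, and, worse, $s_0v$ need not lie in $\Sc_n$, so the naive factorisation bijection breaks down. My plan is to apply the twisted Leibniz rule $\delta_0(AB)=\delta_0(A)\,B+(s^t_0A)\,\delta_0(B)$ to each summand. The first resulting term only involves $\delta_0$ acting on a function of $\ttt$ alone and is handled as in the $i\ge1$ case; the second term forces one to expand $s^t_0\Ff_u(\zz,\xx)=\Ff_u(\zz;-t_1,x_1,x_2,\dots)$, i.e.\ the symmetric-function part of $\Ff_u$ on the $\xx$-alphabet enlarged by one variable. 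Using $\Ff_u(\zz,\xx)=\sum F_a(\xx)\Sf_b(\zz)$ from Theorem~\ref{thm:cschub} (and its type-$D$ analogue) together with the standard splitting behaviour of the Stanley functions $F_a(\xx,y)=\sum_{a=a'a''}F_{a'}(\xx)F_{a''}(y)$, one should find that the spare variable $-t_1$ generated on the symmetric-function side is exactly absorbed by the $t_1$-dependence of the factor $\Sf_{v^{-1}}(-\ttt)$; reassembling and reindexing the two Leibniz contributions then reproduces $G_{s_0w}$ term by term (or $0$, when $\ell(s_0w)>\ell(w)$), after which the type-$B$ statement follows from $\delta^B_0=2\delta_0$. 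This matching step — reconciling the one-variable augmentation of the Stanley-function factor with the Cauchy-type effect of $\delta_0$ on the $t$-variable Schubert polynomial — is where essentially all of the combinatorial work is concentrated; the remaining verifications are formal.
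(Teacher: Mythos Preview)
The paper does not prove this theorem at all: it is quoted verbatim as \cite[Cor.~8.10]{IkedaMihalceaNaruse11} and used as input for the pipe-dream description of double Schubert polynomials. So there is no proof in the paper to compare your attempt against.

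Your outline is the natural one and the easy parts are correct. The initial condition is clear, and the verifications for $\partial_i$ (all $i$) and for $\delta_i$ with $i\ge 1$ go through exactly as you say: these operators touch only one of the two alphabets $(\zz,\xx)$ or $\ttt$, the factorisation bijections $(v,u)\mapsto(v,us_i)$ and $(v,u)\mapsto(s_iv,u)$ are length-additive in the required sense, and your computation $\delta_i\bigl(\Sf_{v^{-1}}(-\ttt)\bigr)=\Sf_{(s_iv)^{-1}}(-\ttt)$ (or $0$) is right.

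The genuine gap is exactly where you flag it: the $\delta_0$ case in types $B/C$ and the $\delta_{\hat 1}$ case in type $D$. Your proposed Leibniz-plus-splitting argument is only a plan, and it leans on an identity you have not established, namely a one-variable coproduct formula for the type $B/C/D$ Stanley functions of the shape $F_a(y,\xx)=\sum_{a=a'a''}(\text{single-variable piece for }a')\,F_{a''}(\xx)$. In type~$A$ such a splitting is standard, but for the peak-weighted functions $F_w$ and $E_w$ of \cite{BilleyHaiman95} the $2^{i(\alpha)}$ (resp.\ $2^{i(\alpha)-o(\aaa)}$) factors make this more delicate, and nothing in the present paper supplies it. Even granting a suitable splitting, you still have to reconcile three length-additive factorisations simultaneously (the $vu=w$ in the Cauchy sum, the $ab=u$ coming from Theorem~\ref{thm:cschub}, and the $a'a''=a$ from the splitting) and show that the two Leibniz pieces recombine into $G_{s_0w}$; this reindexing is where the proof lives, and you have not carried it out. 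As written, the proposal is an honest sketch with the decisive step missing; to turn it into a proof you would need either to supply the Stanley-function splitting in types $B/C/D$ and complete the cancellation, or to appeal directly to the localisation/interpolation arguments of \cite{IkedaMihalceaNaruse11} that yield the Cauchy formula there.
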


The following easy proposition describes the pipe dreams for the inverse permutation.

\begin{proposition}\label{invschub}
Let  $v\in \Sc_n$, and let $\boxx$ be a box in the base $B_{\Sc_n}$. Let $\var'(\boxx)=-t_j$, where  $j$ is the number of the column containing this box. To a pipe dream $D$ we can assign a monomial
\[
(-\ttt)^{\beta'(D)}=\prod_{\boxx\in B_{\Sc_n}}\var'(\boxx)^{\wt(\boxx)}
\]
(the product of all the variables $-t_j$ over all the crosses in $D$). Then
\[
\Sf_{v^{-1}}(-\ttt)=\sum_{D\in\PD_{A}(v)}(-\ttt)^{\beta'(D)}.
\]
\end{proposition}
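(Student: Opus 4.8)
The plan is to reduce the statement about $v^{-1}$ to the already-established pipe dream formula for $v$ (Theorem~\ref{sdreams}) via the standard involution on pipe dreams induced by transposing the staircase base. First I would recall that for $v \in \Sc_n$ one has the identity $\Sf_{v^{-1}}(\zz) = \sum_{D \in \PD_A(v^{-1})} \zz^{\beta(D)}$ by Theorem~\ref{sdreams} applied to $v^{-1}$. The key combinatorial input is the classical fact that transposing a pipe dream along the main diagonal of the staircase $B_{\Sc_n}$ — i.e. reflecting a filling of the box in row $i$, column-position $j$ (so lying on diagonal $i+j-1$) to the box in row $j$, column-position $i$ — carries reduced pipe dreams of shape $v$ bijectively onto reduced pipe dreams of shape $v^{-1}$. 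Indeed, this reflection swaps the roles of the left and top boundary labels of the diagram, which is exactly the passage from the permutation $w$ (strands connect $i$ on the left to $w(i)$ on top) to $w^{-1}$; and it preserves reducedness since ``each pair of strands crosses at most once'' is a symmetric condition. I would cite this as the well-known transpose symmetry of rc-graphs (it appears in \cite{BilleyBergeron93,FominKirillov96}), so only a brief justification is needed.

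Next I would track what the transpose does to the monomial weights. Under the usual assignment $\var(\boxx) = z_i$ for a cross in row $i$, the transpose sends a cross in row $i$ to a cross in row $j$; but the column-position of the original box equals $j$, so the diagonal index is $i+j-1$ in both cases and — crucially — the column of the original box is exactly the row of the transposed box. Hence if we weight each box of $B_{\Sc_n}$ by the \emph{column} variable rather than the row variable, a reduced pipe dream $D$ of shape $v$ has this ``column weight'' equal to the ordinary row weight $\zz^{\beta(D^T)}$ of its transpose $D^T$, which has shape $v^{-1}$. Substituting $z_j \mapsto -t_j$ in all of this turns the column-variable weight into precisely $(-\ttt)^{\beta'(D)}$ as defined in the proposition. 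Therefore
\[
\sum_{D \in \PD_A(v)} (-\ttt)^{\beta'(D)} = \sum_{D \in \PD_A(v)} (-\zz)^{?}\Big|_{z_j = -t_j, \text{ column weighting}} = \sum_{D' \in \PD_A(v^{-1})} (-\ttt)^{\beta(D')} = \Sf_{v^{-1}}(-\ttt),
\]
where the middle equality is the transpose bijection $D \leftrightarrow D' = D^T$ and the last equality is Theorem~\ref{sdreams} for $v^{-1}$ with the substitution $z_j \mapsto -t_j$. (I will of course write this chain of equalities more carefully than the schematic middle term above, which is just shorthand for the bookkeeping.)

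The main obstacle is purely notational: making the column-versus-row weighting precise and verifying that the diagonal-reflection is genuinely a bijection $\PD_A(v) \to \PD_A(v^{-1})$ preserving reducedness. Once the transpose symmetry is stated cleanly, the rest is a direct substitution $z_j \mapsto -t_j$ and an application of Theorem~\ref{sdreams}; there is no analytic or structural difficulty. An alternative, if one prefers to avoid invoking the transpose bijection as a black box, is to observe directly that $\Sf_{v^{-1}}(\zz)$ by definition satisfies the divided-difference recursion in which the roles of left and right multiplication are exchanged relative to $\Sf_v$, and that the column-weighted generating function over $\PD_A(v)$ satisfies the same recursion with the same initial condition $\Sf_{\id} = 1$; uniqueness then forces equality. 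I would present the transpose-bijection argument as the main line, since it is the most transparent, and mention the recursion argument as a remark.
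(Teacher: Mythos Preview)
Your proposal is correct and matches the paper's proof exactly: the paper also defines the transposed pipe dream $D^T$ (reflection along the NW--SE diagonal), observes that $D\in\PD_A(v)$ iff $D^T\in\PD_A(v^{-1})$, and then invokes Theorem~\ref{sdreams}. Your write-up is more detailed than the paper's three-line proof, but the argument is the same transpose bijection.
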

	\begin{figure}[ht]
		$$\includegraphics{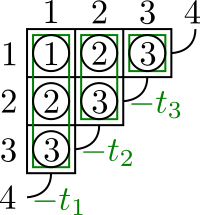}$$
		\caption{The base for pipe dreams of size $n=4$ indicating the variables $\var'$}
		\label{A-inv-base}
	\end{figure}

\begin{proof}
Denote by $D^T$ the \emph{transposed} pipe dream: the pipe dream obtained by reflecting $D$ with respect to the NW-SE diagonal. It is easy to see that $D\in\PD_{A}(v)$ iff $D^T\in\PD_{A}(v^{-1})$. The proposition follows from this assertion and Theorem~\ref{sdreams}. 
\end{proof}

As before, let us denote the truncations of Schubert polynomials as follows:
\[
\Ff_w^{[k]}(\zz,\xx,\ttt)=\Ff_w(\zz,x_1,x_2,\ldots,x_k,0,0,\ldots,\ttt).
\]
Theorems~\ref{cdreams}, \ref{bdreams}, \ref{ddreams} and Proposition~\ref{invschub} imply that the formula from Theorem~\ref{doubleschub} can be rewritten as follows:
\begin{corollary}\label{doubleschub1}
Let $w\in\Fc_n$ and $k\geq 0$. Then we have\[
\Ff^{[k]}_w(\zz,\xx,\ttt)=\sum_{\substack{vu=w\\\ell(v)+\ell(u)=\ell(w)\\v \in \Sc_n}}\sum_{D' \in \PD^k_{\Fc_n}(u)}\sum_{D'' \in \PD_{A}(v)}\xx^{\alpha(D')}\zz^{\beta(D')}(-\ttt)^{\beta'(D'')}.
\]
\end{corollary}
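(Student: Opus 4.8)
The plan is to derive the corollary directly from the Cauchy expansion formula (Theorem~\ref{doubleschub}) by feeding into it the single-Schubert pipe dream expansions already proved. The key observation is that the $k$-truncation operator is nothing but the specialization $x_{k+1}=x_{k+2}=\cdots=0$, hence a ring homomorphism on $\QQ[\zz,p_1(\xx),p_3(\xx),\ldots,\ttt]$; in particular it is multiplicative and fixes every element not involving the $x_i$. Applying it to both sides of Theorem~\ref{doubleschub} and using that $\Sf_{v^{-1}}(-\ttt)$ depends only on the $t$-variables, I obtain
\[
\Ff^{[k]}_{w}(\zz,\xx,\ttt)=\sum_{\substack{vu=w\\\ell(v)+\ell(u)=\ell(w)\\v\in\Sc_n}}\Sf_{v^{-1}}(-\ttt)\,\Ff^{[k]}_u(\zz,\xx).
\]

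Next I would substitute the two known expansions into this identity. For the factor $\Ff^{[k]}_u(\zz,\xx)$ I apply Theorem~\ref{cdreams}, \ref{bdreams}, or~\ref{ddreams} according as $\Fc_n$ is taken with the type-$C$, type-$B$, or type-$D$ normalization, which gives $\Ff^{[k]}_u(\zz,\xx)=\sum_{D'\in\PD^k_{\Fc_n}(u)}\xx^{\alpha(D')}\zz^{\beta(D')}$; here $u$ lies in $\Fc_n$, so these theorems apply verbatim. For the factor $\Sf_{v^{-1}}(-\ttt)$ I apply Proposition~\ref{invschub}, which yields $\Sf_{v^{-1}}(-\ttt)=\sum_{D''\in\PD_{A}(v)}(-\ttt)^{\beta'(D'')}$. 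Plugging both expansions in and expanding the product of the two finite sums term by term produces exactly the claimed triple sum.

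I do not expect a serious obstacle: the content of the corollary is entirely carried by the three earlier pipe dream theorems, the Cauchy formula, and Proposition~\ref{invschub}. The only point that genuinely requires comment is the interchange of $k$-truncation with the product in Theorem~\ref{doubleschub}, and this is immediate once one notes that truncation is a specialization homomorphism acting trivially on the $t$-variables. One should also check that the index set of the outer sum matches on both sides --- a factorization $w=vu$ with $v\in\Sc_n$ and $\ell(v)+\ell(u)=\ell(w)$ --- which is literally the one appearing in Theorem~\ref{doubleschub}, so nothing needs to be reconciled. If desired, letting $k\to\infty$ as in Corollary~\ref{infinite} would upgrade this to the statement for the full double Schubert polynomials, but the finite version stated here already follows from the above.
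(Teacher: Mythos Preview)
Your proof is correct and follows essentially the same approach as the paper: combine the Cauchy expansion formula (Theorem~\ref{doubleschub}) with the pipe dream expansions of Theorems~\ref{cdreams}, \ref{bdreams}, \ref{ddreams} and Proposition~\ref{invschub}. The paper's own argument is a single sentence to this effect; your additional remark that $k$-truncation is a specialization homomorphism fixing the $t$-variables is the only point worth making explicit, and you handled it correctly.
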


The base for double Schubert polynomials  $DB^k_{\Fc_n}$ can be obtained by adding another staircase of size $(n-1,\dots,2,1)$ (shown on Fig.~\ref{A-inv-base}) in the northeastern part of the base  $B^k_{\Fc_n}$ and joining them by elbows $\elbows$ (cf.~Fig.~\ref{double-base}). The boxes of the ``double'' base $DB^k_{\Fc_n}$ are filled according to the same rules as in the case of $B^k_{\Fc_n}$; the object obtained is called a \emph{$b$-, $c$- or $d$-double signed pipe dream}.
\begin{figure}[ht]
	$$\includegraphics[width=0.55\textwidth]{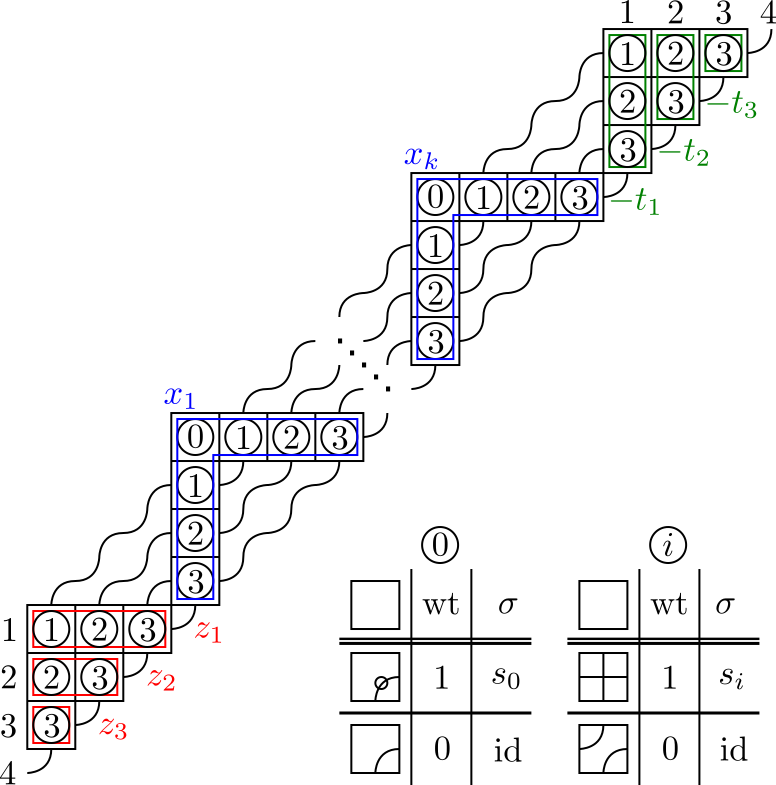}\;\;\;\includegraphics[width=0.55\textwidth]{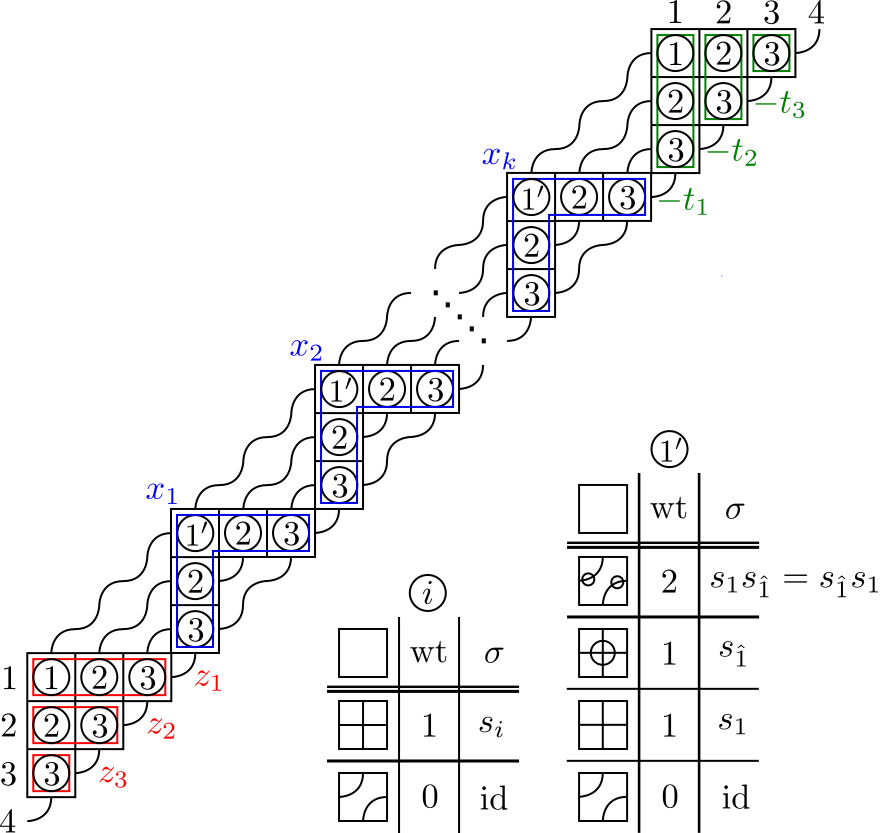}$$
	\caption{Bases $DB^k_{\Bc_4}$ and $DB^k_{\Dc_4}$ for $b$- and $d$-double signed pipe dreams}
	\label{double-base}
\end{figure}

For a double  signed pipe dream $D$ of any of the three types we can define similarly to the ordinary case the following data:
\begin{itemize}
	\item a monomial $\xx^{\alpha(D)}\zz^{\beta(D)}(-\ttt)^{\beta'(D)}$;
	\item the reducibility property;
	\item for $D$ reduced, the shape $w=w(D)\in\Fc_n$.
\end{itemize}
We can subdivide the base $DB^k_{\Fc_n}$ into the base for usual signed pipe dreams $B^k_{\Fc_n}$ and the staircase $B'_{\Sc_n}$ in the northeastern part. So, if  $D$ is reduced and its shape is equal to $w\in\Fc_n$, it is subdivided into  $D'\in\PD^k_{\Fc_n}(u)$ and  $D''\in\PD_{A}(v)$. Obviously, $\ell(u)+\ell(v)=\ell(w)$ and $vu=w$. 

The converse is also true: if $\ell(u)+\ell(v)=\ell(w)$ and $vu=w,v\in \Sc_n$, then the pipe dreams $D'\in\PD^k_{\Fc_n}(u)$ and  $D''\in\PD_{A}(v)$ add up to a reduced double signed pipe dream of shape $w$.

Denote by $\DPD^k_{\Fc_n}(w)$ the set of all reduced double signed pipe dreams of type $B$, $C$ or $D$ of shape $w\in\Fc_n$. Then the formula from Corollary~\ref{doubleschub1} can be rewritten as follows:
\begin{corollary}\label{cor:double}
If $w\in\Fc_n$ and $k\geq 0$, the $k$-truncation of a double Schubert polynomial is obtained as the sum of monomials over all reduced double signed pipe dreams of a given shape:
\[
\Ff^{[k]}_w(\zz,\xx,\ttt)=\sum_{D \in \DPD^k_{\Fc_n}(w)}\xx^{\alpha(D)}\zz^{\beta(D)}(-\ttt)^{\beta'(D)}.
\]
\end{corollary}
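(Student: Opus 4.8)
The plan is to deduce Corollary~\ref{cor:double} from Corollary~\ref{doubleschub1} by exhibiting a shape-preserving, monomial-preserving bijection between the set $\DPD^k_{\Fc_n}(w)$ and the set of triples $(v,D',D'')$ with $v\in\Sc_n$, $vu=w$, $\ell(v)+\ell(u)=\ell(w)$, $D'\in\PD^k_{\Fc_n}(u)$ and $D''\in\PD_A(v)$, under which the monomial attached to $D$ is $\xx^{\alpha(D')}\zz^{\beta(D')}(-\ttt)^{\beta'(D'')}$. Granting such a bijection, summing $\xx^{\alpha(D)}\zz^{\beta(D)}(-\ttt)^{\beta'(D)}$ over $D\in\DPD^k_{\Fc_n}(w)$ reproduces term by term the triple sum of Corollary~\ref{doubleschub1}, which by that corollary equals $\Ff^{[k]}_w(\zz,\xx,\ttt)$. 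In fact this bijection is essentially the content of the discussion preceding the corollary, and the task is to make that discussion precise.

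First I would set up the splitting geometrically. The base $DB^k_{\Fc_n}$ is the disjoint union of $B^k_{\Fc_n}$ and the extra northeastern staircase $B'_{\Sc_n}$, joined by a sea of elbows $\elbows$. Since $B'_{\Sc_n}$ sits in the topmost, rightmost region of $DB^k_{\Fc_n}$, the reading convention (right to left, top to bottom) reads every box of $B'_{\Sc_n}$ before any box of $B^k_{\Fc_n}$; hence for any filling $D$, with restrictions $D''$ to $B'_{\Sc_n}$ and $D'$ to $B^k_{\Fc_n}$, each reading word of $D$ is the concatenation $\word(D'')\,\word_x(D')\,\word_z(D')$ (for type $D$, one such word for each of the $2^{r(D')}$ choices at the double-faucet boxes of $D'$). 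All letters contributed by $B'_{\Sc_n}$ are of the form $s_i$ with $i\geq 1$, so $\word(D'')$ is a word in $\Sc_n\subset\Fc_n$. The monomial splits automatically: by the weighting rule $\var'(\boxx)=-t_j$ of Proposition~\ref{invschub} the significant elements of $D$ lying in $B'_{\Sc_n}$ contribute $(-\ttt)^{\beta'(D'')}$, while those in $B^k_{\Fc_n}$ contribute $\xx^{\alpha(D')}\zz^{\beta(D')}$.

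The heart of the argument is the equivalence: $D$ is reduced of shape $w$ if and only if $D''$ is reduced of some shape $v\in\Sc_n$, $D'$ is reduced of some shape $u\in\Fc_n$, and $w=vu$ with $\ell(w)=\ell(v)+\ell(u)$. The forward implication is immediate from the fact that a factor of a reduced word is reduced: if $\word(D)=\word(D'')\,\word(D')$ is reduced for $w$, then writing $v',u'$ for the elements these factors represent, $\ell(w)\le\ell(v')+\ell(u')\le\ell(\word(D''))+\ell(\word(D'))=\ell(w)$ forces both factors to be reduced, so $v'=v\in\Sc_n$, $u'=u$, $w=vu$ and the lengths add. Conversely, given a triple, I glue $D'$ and $D''$ along the common boundary; since $vu=w$ with $\ell(v)+\ell(u)=\ell(w)$, the concatenation of a reduced word for $v$ with a reduced word for $u$ represents $w$ and has length $\ell(w)$, hence is reduced, so the glued diagram $D$ is reduced of shape $w$. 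This word-level argument is uniform across types $B$, $C$, $D$, so it sidesteps any case analysis on the forbidden strand patterns of Figure~\ref{D-forbidden}; alternatively one can phrase the same equivalence in terms of strands, checking that no pair acquires a double crossing with one crossing in $B'_{\Sc_n}$ and one in $B^k_{\Fc_n}$.

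The main obstacle is bookkeeping rather than conceptual: one must pin down the gluing conventions so that the reading word of $D$ genuinely factors as $\word(D'')\,\word(D')$ in this order (not interleaved or reversed), and so that Proposition~\ref{invschub} applies with the northeastern staircase oriented and indexed (by columns via $-t_j$) exactly as needed to produce $\Sf_{v^{-1}}(-\ttt)$ rather than $\Sf_v(-\ttt)$. Once these conventions are fixed, substituting the bijection into the sum of Corollary~\ref{doubleschub1} yields the claimed formula.
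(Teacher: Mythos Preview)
Your proposal is correct and follows essentially the same approach as the paper: the paper's proof consists of the two paragraphs immediately preceding the corollary, which assert (without further justification) that splitting $DB^k_{\Fc_n}$ into $B^k_{\Fc_n}$ and $B'_{\Sc_n}$ gives a bijection between $\DPD^k_{\Fc_n}(w)$ and pairs $(D',D'')$ with $vu=w$, $\ell(v)+\ell(u)=\ell(w)$, and then invokes Corollary~\ref{doubleschub1}. You have simply made this argument precise, supplying the reading-order check, the monomial splitting, and the length-additivity argument for reducedness that the paper leaves implicit.
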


\begin{remark} We can consider the base for double signed pipe dreams with countably many $\Gamma$-blocks in the middle. This allows us to obtain double Schubert polynomials $\Ff_w(\zz,\xx,\ttt)$ as sums of monomials over infinite reduced  double signed pipe dreams.
\end{remark}

\subsection{Examples}\label{ssec:examples}

\begin{example}
Let  $w=s_0\in\BCc_\infty$. All pipe dreams  $D\in\PD_{\Bc_\infty}(s_0)$ have exactly one significant element: a faucet in a corner of any $\Gamma$-block. Hence
\begin{eqnarray*}
\Bf_{s_0}(\zz,\xx)=\sum x_i=p_1(\xx);\\
\Cf_{s_0}(\zz,\xx)=2\sum x_i=2p_1(\xx).
\end{eqnarray*}
\end{example}

\begin{example}
Let $w=s_i\in \BCc_\infty$ and $i\geq 1$, or $w=s_i\in\Dc_\infty$ and $i\geq 2$. Each pipe dream (of any type) of this shape has a unique significant element: a cross in a box labeled by $i$. It can be located either in one of the first $i$ rows of the staircase block, or in any of the two parts of any $\Gamma$-block. Hence
\[
\Bf_{s_i}(\zz,\xx)=\Cf_{s_i}(\zz,\xx)=\Df_{s_i}(\zz,\xx)=z_1+z_2+\ldots+z_i+2p_1(\xx).
\]
Now consider double signed pipe dreams of shape $s_i$. The cross can be also located in one of the first $i$ columns of the upper staircase block, so
\[
\Bf_{s_i}(\zz,\xx,\ttt)=\Cf_{s_i}(\zz,\xx,\ttt)=\Df_{s_i}(\zz,\xx,\ttt)=(z_1-t_1)+(z_2-t_2)+\ldots+(z_i-t_i)+2p_1(\xx).
\]
\end{example}

\begin{example}
In a reduced $d$-signed pipe dream of shape $s_1\in\Dc_\infty$ the unique cross can be located either in the corner of the staircase block, or in the corner of any $\Gamma$-block. In the case of  $d$-double signed pipe dreams the cross can also be situated in the corner of the upper staircase block. This means that
\begin{eqnarray*}
\Df_{s_1}(\zz,\xx)=z_1+p_1(\xx);\\
\Df_{s_1}(\zz,\xx,\ttt)=(z_1-t_1)+p_1(\xx).
\end{eqnarray*}
	
Now consider a reduced $d$-signed pipe dream of shape $s_{\hat 1}\in\Dc_n$. They contain one cross with a faucet, which can be situated only in the corner of a $\Gamma$-blocks, hence
\[
\Df_{s_{\hat 1}}(\zz,\xx)=\Df_{s_{\hat 1}}(\zz,\xx,\ttt)=p_1(\xx).
\]
\end{example}

\begin{example}
Let $w=s_1s_{\hat 1}\in\Dc_\infty$. If  $D$ is a reduced $d$-signed pipe dream of shape $w$, the following cases may occur:
\begin{itemize}
		\item There is a cross in the corner of the staircase block and a cross with a faucet in the corner of the $i$-th $\Gamma$-block. For each $i$ we obtain a summand $z_1x_i$.

		\item In the corner of the $i$-th $\Gamma$-block there is a cross, and in the $j$-th $\Gamma$-block there is a cross with a faucet. For each $i\ne j$ this gives us a summand $x_ix_j$.

		\item In the corner of the  $i$-th $\Gamma$-block there is an elbow joint with two faucets. For each $i$ we obtain a summand $x_i^2$.
\end{itemize}

So we have
\[
\Dc_{s_1s_{\hat 1}}(\zz,\xx)=z_1\sum_i x_i+\sum_i x_i^2+\sum_{i \ne j} x_ix_j=z_1p_1(\xx)+p_1^2(\xx).
\]
If  $D$ is a reduced $d$-double signed pipe dream, another possibility occurs: a cross can be located in the corner of the upper staircase block, while a cross with a faucet is in the corner of the $i$-th $\Gamma$-block. These cases provide summands of the form $-t_1x_i$, hence
\[
\Dc_{s_1s_{\hat 1}}(\zz,\xx,\ttt)=(z_1-t_1)p_1(\xx)+p_1^2(\xx).
\]
\end{example}

\begin{example}
Let $w=s_1s_0s_1=1\overline{2}\in\BCc_\infty$. If $D\in\PD_{\Bc_2}(w)$, the pipe dream $D$ has two crosses in the boxes indexed by $1$ and a faucet between them. The following cases may occur:
	\begin{itemize}
		\item The lower cross is in the corner of the staircase block, the faucet and the upper cross are in the same ($i$-th) $\Gamma$-block. For each $i$ we obtain $z_1x_i^2$.
		\item  The lower cross is in the corner of the staircase block, the faucet is in the $i$-th $\Gamma$-block, the upper cross is in any of of the two parts of the $j$-th $\Gamma$-block. For each  $i<j$ we obtain a summand  $2z_1x_ix_j$.
		\item All three elements are situated in the  $i$-th $\Gamma$-block. For each $i$ we obtain a summand $x_i^3$.
		\item The lower cross is in any of the two parts of the  $i$-th $\Gamma$-block, the upper cross and the faucet are in the $j$-th $\Gamma$-block. For each $i<j$ we get a summand $2x_ix_j^2$.
		\item The lower cross and the faucet are in the  $i$-th $\Gamma$-block, and the upper cross is in any of the two parts of the $j$-th $\Gamma$-block. For each $i<j$ we have $2x_i^2x_j$.
		\item The lower cross is in any of the two parts of the $i$-th $\Gamma$-block, the faucet is in the  $j$-th $\Gamma$-block, the upper cross is in any of the two parts of the $k$-th $\Gamma$-block. For each $i<j<k$ we have a summand  $4x_ix_jx_k.$
	\end{itemize}
Summarizing, we obtain
\begin{multline*}
\Bf_{s_1s_0s_1}(\zz,\xx)=z_1\sum_i x_i^2+2z_1\sum_{i< j}x_ix_j+\sum_i x_i^3+\\+2\sum_{i < j}x_i^2 x_j+2\sum_{i < j}x_i x_j^2+4\sum_{i<j<k}x_ix_jx_k=
z_1 p_1^2(\xx)+\frac 2 3 p_1^3(\xx)+\frac 1 3 p_3(\xx).
\end{multline*}
\end{example}

\section{Relation to extended excited Young diagrams}\label{sec:kirnar}

\subsection{The construction by Kirillov and Naruse}\label{ssec:kirnardef}
In this section we compare our construction to the construction of pipe dreams described by An.~Kirillov and H.~Naruse in~\cite{KirillovNaruse17} by means of ``extended excited Young diagrams''. We refer the reader to Sec.~8.2 of this paper.

Let us recall their construction for type $B$. They start with a trapezoidal skew Young diagram\footnote{We turn the figures from \cite{KirillovNaruse17} 90 degrees clockwise; this allows us to draw Young diagrams in the English notation, as opposed to the French one, used in the cited paper.} consisting of a staircase block $(n-1,n-2,\dots,2,1)$ with $n$ rows of length $n$ above it, with each upper row shifted by one to the right with respect to the row below it, see Figure~\ref{fig:knpipedream}. This diagram is called an extended excited  Young diagram (extended EYD for short). A pipe dream is a subset of boxes of this diagram, marked by crosses.

Each box of an extended EYD corresponds to the sum of two variables: $z_i+x_j$, $x_i+x_j$, or $x_i-t_j$ (our variables   $z_i$ and $-t_j$ are denoted in  \cite{KirillovNaruse17} by $a_i$ and $b_j$, respectively). The \emph{weight} of a pipe dream is the product of all such binomials corresponding to the boxes marked by crosses.

Moreover, each box of the extended EYD corresponds to a simple reflection in the corresponding Weyl group $s_0,\dots,s_{n-1}$ for the types $B$ and $C$, $s_1,s_{\hat 1},s_2,\dots,s_{n-1}$ for the type $D$. The \emph{permutation} corresponding to a pipe dream is the product of all simple reflections corresponding to the marked boxes, read from right to left and from top to bottom. 

Thus, the (truncated) Schubert polynomial $\Fc_w^{[n]}(\zz,\xx,\ttt)$ is equal to the sum of the weights for all reduced pipe dreams with the permutation $w$ (see \cite[Thm 5, Thm 6]{KirillovNaruse17}).

Here we reproduce Example~8 from the aforementioned paper.

\begin{example}  Consider the signed permutation $w=2\bar{3}1=s_2s_1s_2s_0s_1\in\BCc_3$. Here is an example of pipe dream corresponding to this permutation. Its weight is equal to $(x_3-t_2)(x_3-t_1)(x_2-t_1)x_2(z_2+x_1)$.
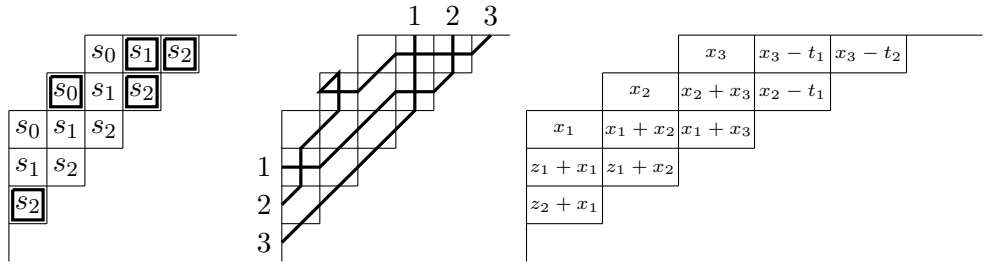
\begin{figure}[h!]\label{KN_example}
\begin{tikzpicture}[scale=0.5]
\draw (0,-6)--(0,-2);
\draw (1,-5)--(1,-1);
\draw (2,-4)--(2,0);
\draw (3,-3)--(3,0);
\draw(4,-2)--(4,0);
\draw (5,-1)--(5,0);
\draw (2,0)--(6,0);
\draw (1,-1)--(5,-1);
\draw (0,-2)--(4,-2);
\draw (0,-3)--(3,-3);
\draw (0,-4)--(2,-4);
\draw (0,-5)--(1,-5);
\node at (2.5,-0.5) {$s_0$};
\node at (1.5,-1.5) {$s_0$};
\node at (.5,-2.5) {$s_0$};
\node at (3.5,-0.5) {$s_1$};
\node at (2.5,-1.5) {$s_1$};
\node at (1.5,-2.5) {$s_1$};
\node at (.5,-3.5) {$s_1$};
\node at (4.5,-0.5) {$s_2$};
\node at (3.5,-1.5) {$s_2$};
\node at (2.5,-2.5) {$s_2$};
\node at (1.5,-3.5) {$s_2$};
\node at (.5,-4.5) {$s_2$};

\draw[very thick] (3.1,-0.1)--(3.9,-0.1)--(3.9,-0.9)--(3.1,-0.9)--(3.1,-0.1);
\draw[very thick] (4.1,-0.1)--(4.9,-0.1)--(4.9,-0.9)--(4.1,-0.9)--(4.1,-0.1);
\draw[very thick] (3.1,-1.1)--(3.9,-1.1)--(3.9,-1.9)--(3.1,-1.9)--(3.1,-1.1);
\draw[very thick] (1.1,-1.1)--(1.9,-1.1)--(1.9,-1.9)--(1.1,-1.9)--(1.1,-1.1);
\draw[very thick] (0.1,-4.1)--(0.9,-4.1)--(0.9,-4.9)--(0.1,-4.9)--(0.1,-4.1);
\end{tikzpicture}
\begin{tikzpicture}[scale=0.5]
\draw (0,-6)--(0,-2);
\draw (1,-5)--(1,-1);
\draw (2,-4)--(2,0);
\draw (3,-3)--(3,0);
\draw(4,-2)--(4,0);
\draw (5,-1)--(5,0);
\draw (2,0)--(6,0);
\draw (1,-1)--(5,-1);
\draw (0,-2)--(4,-2);
\draw (0,-3)--(3,-3);
\draw (0,-4)--(2,-4);
\draw (0,-5)--(1,-5);
\node [above] at (3.5,0) {$1$};
\node [above] at (4.5,0) {$2$};
\node [above] at (5.5,0) {$3$};
\node [left] at (0,-3.5) {$1$};
\node [left] at (0,-4.5) {$2$};
\node [left] at (0,-5.5) {$3$};
\draw [very thick] (0,-5.5)--(3.5,-2)--(3.5,0);
\draw [very thick] (0,-4.5)--(0.5,-4)--(0.5,-3)--(1.5,-2)--(1.5,-1)--(1,-1.5)--(2,-1.5)--(3,-0.5)--(5,-0.5)--(5.5,0);
\draw [very thick](0,-3.5)--(1,-3.5)--(3,-1.5)--(4,-1.5)--(4.5,-1)--(4.5,0);
\end{tikzpicture}
\begin{tikzpicture}[scale=0.5]
\draw (0,-6)--(0,-2);
\draw (2,-5)--(2,-1);
\draw (4,-4)--(4,0);
\draw (6,-3)--(6,0);
\draw(8,-2)--(8,0);
\draw (10,-1)--(10,0);
\draw (4,0)--(12,0);
\draw (2,-1)--(10,-1);
\draw (0,-2)--(8,-2);
\draw (0,-3)--(6,-3);
\draw (0,-4)--(4,-4);
\draw (0,-5)--(2,-5);
\node at (5,-0.5) {\tiny{$x_3$}};
\node at (3,-1.5) {\tiny{$x_2$}};
\node at (1,-2.5) {\tiny{$x_1$}};
\node at (7,-0.5) {\tiny{$x_3-t_1$}};
\node at (5,-1.5) {\tiny{$x_2+x_3$}};
\node at (3,-2.5) {\tiny{$x_1+x_2$}};
\node at (1,-3.5) {\tiny{$z_1+x_1$}};
\node at (9,-0.5) {\tiny{$x_3-t_2$}};
\node at (7,-1.5) {\tiny{$x_2-t_1$}};
\node at (5,-2.5) {\tiny{$x_1+x_3$}};
\node at (3,-3.5) {\tiny{$z_1+x_2$}};
\node at (1,-4.5) {\tiny{$z_2+x_1$}};
\end{tikzpicture}
\caption{An extended excited Young diagram}\label{fig:knpipedream}
\end{figure}
\end{example}

The boxes of each diagonal of this skew Young diagram are indexed by the corresponding simple reflection; this allows us to assign to an extended EYD an element of $\Fc_n$. There is another indexing of its boxes, shown on the same figure on the right: each box, except those on the topmost antidiagonal, corresponds to a \emph{binomial} (as opposed to a monomial in our case), i.e. the sum of variables $x_i$, $z_i$, or $-t_i$. Each extended EYD with $d$ crosses thus produces a polynomial with $2^d$ monomials. In our setting, it corresponds to $2^d$ different pipe dreams. In the next subsection we describe the procedure producing these pipe dreams from the extended EYD.

\begin{remark} These two constructions of pipe dreams for are somewhat parallel to the constructions of pipe dreams of double Schubert polynomials due to Bergeron--Billey and Fomin--Kirillov, respectively. In the former paper, the authors assign a pipe dream to each monomial of double Schubert polynomial; this is similar to our construction presented in Section~\ref{sec:main}. Corollary~\ref{cor:double} is thus a direct generalization of \cite[(4.1)]{BilleyBergeron93}. Fomin and Kirillov, on the other hand, assign to each pipe dream a product of several binomials (see \cite[Thm~6.2]{FominKirillov96}), just like in~Theorem~5  of the paper by Kirillov and Naruse \cite{KirillovNaruse17} discussed in this section. The latter presentations are thus more efficient (or, put it differently, coarser): each diagram corresponds to several monomials, as opposed to one. Algebraically, the relation between these two constructions is nothing but the Cauchy expansion formula (see Theorem~\ref{doubleschub} above).\footnote{We are grateful to the referee for this remark.}
\end{remark}

\subsection{Producing pipe dreams from an extended EYD}\label{ssec:kirnarbijection}

Starting from an extended excited Young diagram, one can produce several  pipe dreams corresponding to the same permutation, such that the sum of the monomials corresponding to the pipe dreams is equal to the weight of the extended EYD.

	Let us illustrate the bijection of our pipe dreams with those from the paper \cite{KirillovNaruse17} with an example of the type $B$. For the types $C$ and $D$, this bijection is constructed similarly.
	
Informally, a pipe dream from~\cite{KirillovNaruse17} can be obtained from our pipe dream by pushing the bottom staircase block, $n$ copies of $\Gamma$-blocks, and the top staircase block one into another. Our aim is to ``pull these blocks apart'' in such a way that the shape of the pipe dream and the number of significant elements in each of the blocks remain unchanged.

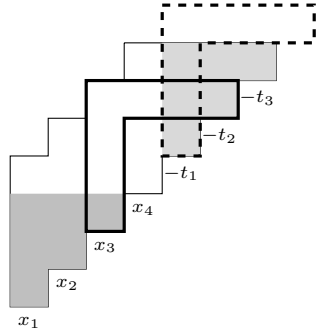
\begin{figure}[h!]
\begin{tikzpicture}[scale=0.5]
\draw(3,0)--(7,0)--(7,-1)--(6,-1)--(6,-2)--(5,-2)--(5,-3)--(4,-3)--(4,-4)--(3,-4)--(3,-5)--(2,-5)--(2,-6)--(1,-6)--(1,-7)--(0,-7)--(0,-3)--(1,-3)--(1,-2)--(2,-2)--(2,-1)--(3,-1)--(3,0);
\fill[black!25!white] (0,-7)--(0,-4)--(3,-4)--(3,-5)--(2,-5)--(2,-6)--(1,-6)--(1,-7)--(0,-7);
\fill[black!15!white] (4,0)--(7,0)--(7,-1)--(6,-1)--(6,-2)--(5,-2)--(5,-3)--(4,-3)--cycle;
\draw[very thick] (2,-1)--(6,-1)--(6,-2)--(3,-2)--(3,-5)--(2,-5)--cycle;
\draw[dashed,very thick] (4,1)--(8,1)--(8,0)--(5,0)--(5,-3)--(4,-3)--cycle;
\node[below] at (0.5,-7) {\tiny{$x_1$}};
\node[below] at (1.5,-6) {\tiny{$x_2$}};
\node[below] at (2.5,-5) {\tiny{$x_3$}};
\node[below] at (3.5,-4) {\tiny{$x_4$}};
\node[below] at (4.5,-3) {\tiny{$-t_1$}};
\node[below] at (5.5,-2) {\tiny{$-t_2$}};
\node[below] at (6.5,-1) {\tiny{$-t_3$}};
\end{tikzpicture}
\caption{An extended EYD. The regions corresponding to lower (resp. upper) staircase blocks are shaded in dark (resp. light) grey; the  $\Gamma$-blocks corresponding to $t_1$ and $x_3$  are highlighted.}\label{fig:pullapart}
\end{figure}

To do this, for each binomial $x_i-t_j$, $x_i+x_j$, or $z_i+x_j$ corresponding to a cross in the extended EYD let us select one of the two terms.  We want to construct a pipe dream with the same permutation and the monomial equal to the product of the selected terms. That means that the crosses in the lower/upper staircase blocks of this pipe dream should correspond to the crosses in the extended EYD marked by $z_i$ and $-t_j$ respectively, and each cross in the extended EYD marked by $x_i$ should produce a cross or a faucet in the $i$-th $\Gamma$-block of the resulting pipe dream. This will be done as follows. 

The diagram can be represented as the union of several $\Gamma$-blocks (an example with two of them is shown on Figure~\ref{fig:pullapart}). They will be indexed, counting from top to bottom, by the following variables: $-t_{n-1},\dots,-t_1,x_n,\dots,x_1$. On the figure we write the corresponding variable under each $\Gamma$-block. The $\Gamma$-blocks indexed by the $-t_i$'s contain significant elements only in their vertical parts: their horizontal parts are located ``outside" the pipe dream. 

Let us pull out the $\Gamma$-blocks one by one in the northeast direction, starting from the top one. After shifting the first block far enough, we proceed with the next $\Gamma$-block, counted from the top, and so on. We shift the first $n-1$ blocks, corresponding to the $-t_i$'s, by the same number of positions northeast, in such a way that the shifted blocks would again form an upper staircase of our pipe dream, but it does not intersect any lower blocks. Then we shift all the subsequent $\Gamma$-blocks, corresponding to $x_i$, also starting with the top one, in such a way that finally they are separate (and thus form a pipe dream).

 Further we describe one step of this procedure, i.e., how to shift the topmost, i.e.  the $n$-th, $\Gamma$-block one box up and right.

\begin{figure*}
$$
\includegraphics[width=14cm]{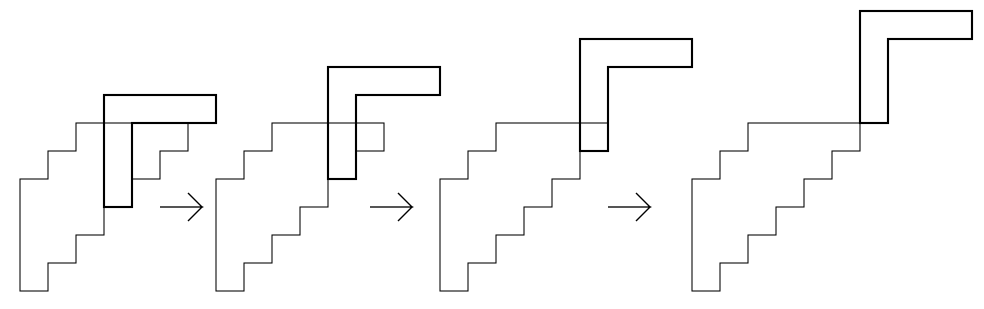}
$$
\caption{Shifting of the topmost $\Gamma$-block northeast}	
\end{figure*}

Let us draw circles around the significant elements corresponding to the variable $x_n$, i.e. the elements which belong to the block we are shifting.

All the significant elements in the topmost row necessarily belong to the topmost $\Gamma$-block; we shift them one box northeast.

$$\includegraphics[width=2cm]{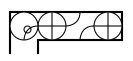}$$

Now consider the crosses in the vertical part of the $\Gamma$-block that we are shifting. Denote the column containing this vertical part and the column immediately right of it by $V$ and $V+1$, respectively. Note that all nonencircled crosses in the column $V$ correspond to the horizontal parts of their $\Gamma$-blocks (or belong to the rows of the staircase block). So we need to shift some crosses in such a way that:
\begin{itemize}
	\item the shape of the pipe dream remains the same;
	\item the number of encircled crosses in the column $V+1$ after the shift is equal to the number of encircled crosses in the column $V$ before the shift;
	\item the number of nonencircled crosses in each row remains the same.
\end{itemize}
Let us move the encircled crosses one by one, starting from the top. We distinguish between the two cases:
\begin{itemize}
	\item There is a cross to the right of the encircled cross. This means that the encircled cross does not belong to the bottom line of the $\Gamma$-block. In this case, we swap the encircled cross with the nonencircled one (so the encircled cross moves one box down with respect to the $\Gamma$-block, see the figure below).
	$$\includegraphics[width=3.5cm]{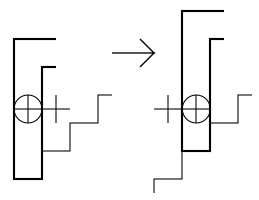}$$
	This operation does not involve any elements to the top of the encircled cross.
	$$\includegraphics[width=3.5cm]{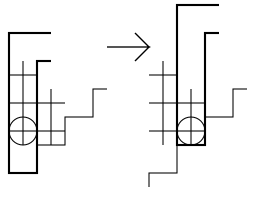}$$
	\item There is an elbow to the right of the encircled cross. We need to distinguish between the following subcases.
	\begin{itemize}
		\item If above $\crosscirc\elbows$ there are two elbows $\elbows\elbows$, the encircled cross is shifted northeast.
		$$\includegraphics[width=3.5cm]{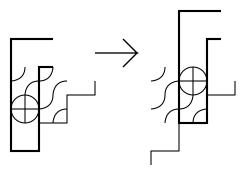}$$
		\item If above $\crosscirc\elbows$ there are several rows with a cross followed by an elbow: $\cross\elbows$, the whole column of crosses is shifted northeast, and we put a circle around the topmost of them.
			$$\includegraphics[width=3.5cm]{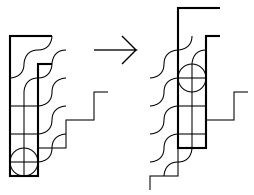}$$
		\item If above $\crosscirc\elbows$ there are several lines with double crosses: $\cross\cross$, the encircled cross is shifted by a ladder move.
		$$\includegraphics[width=3.5cm]{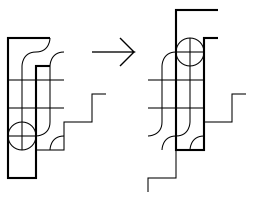}$$
		\item In the general case we proceed as follows. Consider the columns $V$ and $V+1$. Above $\crosscirc\elbows$ we have some sequence of pairs $\cross\cross$ and $\cross\elbows$, and on the top of this sequence there are two elbows $\elbows\elbows$ (since our pipe dream is reduced, the configuration $\elbows\cross$ cannot  occur below $\elbows\elbows$). Starting from the top, we pull the crosses from the configurations $\cross\elbows$ through $\cross\cross$ using ladder moves and encircle the topmost of the shifted crosses.
		$$\includegraphics[width=3.5cm]{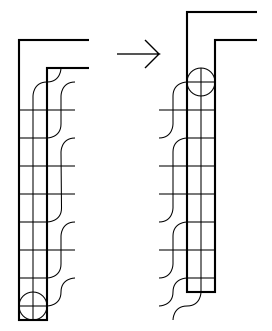}$$
	\end{itemize}
	
\end{itemize}

The figure below shows the procedure of shifting a $\Gamma$-block in case of several encircled crosses.

		$$\includegraphics[width=15cm]{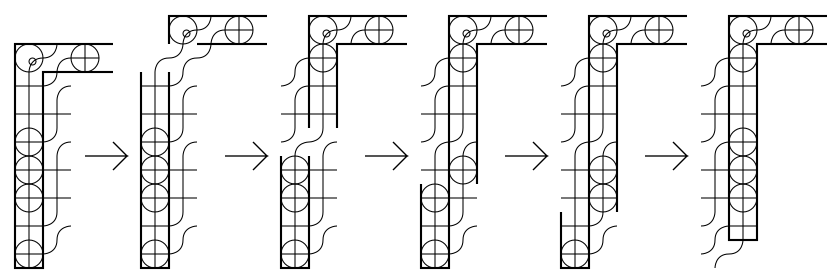}$$
We shift the encircled crosses one by one, starting from the top. However, as we proceed, the order of the encircled crosses may change.

Finally, is is clear that the operation of ``pulling apart'' of $\Gamma$-blocks is invertible: we can bring the blocks back together, starting from the bottom ones and keeping in mind which significant element belongs to which block. This would give us an excited Young diagram together with one of the  monomials occuring in its weight.

The procedure for the types $C$ and $D$ is similar, so we do not  describe it here.

\begin{remark}
T.\,Ikeda and H.\,Naruse~\cite{IkedaNaruse13} proposed a description of $P$- and $Q$-Schur functions in terms of the so-called \emph{excited Young diagrams}. These functions are particular cases of Schubert polynomials of types $B$, $C$ and $D$ (see Section~\ref{sec:grassmann}), corresponding to Grassmannian permutations. The procedure described above is a generalization of the \emph{separation of variables} described in~\cite[Sec.~10]{IkedaNaruse13}.
\end{remark}

\section{Bottom pipe dreams}\label{sec:bottom}

\subsection{Admissible moves and extended Lehmer codes of signed permutations}\label{ssec:lehmer}


Let $w\in\Sc_n$ or $w\in\Fc_n$. Consider the set of pipe dreams of a given shape: $\PD_{A}(w)$ or $\PD_{\Fc_n}(w)$. We introduce a partial order on this set as follows. Let  $D$ and $D'$ be two pipe dreams of the same shape that coincide everywhere except two boxes  $a$ and $b$, and the row containing $a$ is above the row containing $b$. We shall say that  $D$ is obtained from $D'$ by an \emph{admissible move} (notation: $D\prec D'$) if the elements in $a$ and $b$ are located in one of the following nine ways:
\[
\begin{array}{c||c|c}
	& D' & D \\
	\hline\hline
	a \rule{0pt}{13pt} & \cross & \elbows \\
	\hline
	b \rule{0pt}{13pt} & \elbows & \cross
	\end{array} \;\; 
	\begin{array}{c||c|c}
	& D' & D \\
	\hline\hline
	a \rule{0pt}{13pt} & \crosssign & \elbows \\
	\hline
	b \rule{0pt}{13pt} & \elbows & \cross
	\end{array}\;\;
	\begin{array}{c||c|c}
	& D' & D \\
	\hline\hline
	a \rule{0pt}{13pt} & \cross & \elbows \\
	\hline
	b \rule{0pt}{13pt} & \elbows & \crosssign
	\end{array}\;\;
	\begin{array}{c||c|c}
	& D' & D \\
	\hline\hline
	a \rule{0pt}{13pt} & \crosssign & \elbows \\
	\hline
	b \rule{0pt}{13pt} & \elbows & \crosssign
	\end{array}\;\;
	\begin{array}{c||c|c}
	& D' & D \\
	\hline\hline
	a \rule{0pt}{13pt} & \elbowsign & \elbowup \\
	\hline
	b \rule{0pt}{13pt} & \elbowup & \elbowsign
	\end{array}
\]
\[
	\begin{array}{c||c|c}
	& D' & D \\
	\hline\hline
	a \rule{0pt}{13pt} & \elbowssign & \crosssign \\
	\hline
	b \rule{0pt}{13pt} & \elbows & \cross
	\end{array} \;\; 
	\begin{array}{c||c|c}
	& D' & D \\
	\hline\hline
	a \rule{0pt}{13pt} & \elbowssign & \cross \\
	\hline
	b \rule{0pt}{13pt} & \elbows & \crosssign
	\end{array}\;\;
	\begin{array}{c||c|c}
	& D' & D \\
	\hline\hline
	a \rule{0pt}{13pt} & \crosssign & \elbows \\
	\hline
	b \rule{0pt}{13pt} & \cross & \elbowssign
	\end{array}\;\;
	\begin{array}{c||c|c}
	& D' & D \\
	\hline\hline
	a \rule{0pt}{13pt} & \cross & \elbows \\
	\hline
	b \rule{0pt}{13pt} & \crosssign & \elbowssign
	\end{array}
\]
(For the entries in the first line, $D$ is obtained from $D'$ by shifting one significant element down.)

\begin{example} Let $D$, $D'$ be pipe dreams of the type $A$. In this case, only the first of these nine moves is allowed. As a particular case of this move, we have a  \emph{ladder move} defined in~\cite{BilleyBergeron93}:
\[
\begingroup
	\setlength\arraycolsep{0pt}
	\renewcommand{\arraystretch}{0.07}
	\begin{matrix}
	\elbows & \cross\\
	\cross & \cross\\[3pt]
	\hdotsfor{2}\\[3pt]
	\cross & \cross\\
	\elbows & \elbows
	\end{matrix} \mapsto
	\begin{matrix}
	\elbows & \elbows\\
	\cross & \cross\\[3pt]
	\hdotsfor{2}\\[3pt]
	\cross & \cross\\
	\cross & \elbows
	\end{matrix}
	\endgroup
\]
(here the dots represent a sequence of crosses). 
\end{example}
 
Extending the relation   $\prec$ by transitivity, we obtain a partial order on the set of  pipe dreams. This is indeed a partial order. To show this, let us introduce an order on the set of variables as follows: $z_{n-1}<z_{n-2}<\ldots<z_1<x_1<x_2<\ldots$. Consider the corresponding lexicographic order $>$ on the set of monomials. It is clear that for $D\prec D'$ we have $\xx^{\alpha(D)}\zz^{\beta(D)}< \xx^{\alpha(D')}\zz^{\beta(D')}$.

Our next goal is to show that for each $w$ the set $\PD_{A}(w)$ (resp. $\PD_{\Fc_n}(w)$) has a unique minimal element with respect to the partial order $\prec$.  Such a pipe dream is called \emph{the bottom pipe dream} of $w$.

Recall the definition of the Lehmer code of a permutation. We also need to generalize the notion of the Lehmer code for a signed permutation.

\begin{definition}
	Let $w\in\Sc_n$. The \emph{Lehmer code} $L(w)$ of this permutation is a sequence $(L_1(w),\ldots, L_n(w))$, where 
	$$
	L_i(w)=\#\{j>i \mid w(i)>w(j)\}.
	$$
\end{definition}

\begin{definition}
	Let $w\in\BCc_n$. Consider $w$ as a bijection from $\{-n,\ldots,-1,1,\ldots, n\}$ to itself. The \emph{Lehmer code} of $w$ is  a sequence  $L(w)=(L_1(w),\ldots, L_n(w))$ defined as $$L_i(w)=\#\{j>i \mid w(i)>w(j)\}.$$ The \emph{signed Lehmer code} of $w$ is the pair $(L(w),N(w)$), where  $$N(w)=\left(N_1(w)>N_2(w)>\dots>N_{s(w)}(w)\mid w^{-1}\left(N_i(w)\right)<0 \text{ for all } i\right) $$  is the decreasing sequence of positive entries with negative preimages.
\end{definition}

It is well-known (cf., for instance, \cite[p.~9]{Macdonald91}) that a permutation is uniquely defined by its Lehmer code. This immediately implies that a signed permutation is uniquely defined by its signed Lehmer code.

\begin{remark} There seems to be no prevalent notion of the Lehmer code for signed permutations in the literature. For example, the definitions of Lehmer codes for signed permutations given in the papers~\cite{GeckKim97} and~\cite{LascouxSchutzenberger96} are different from ours, and from each other as well. 
\end{remark}



\subsection{Type $A$ pipe dreams}\label{ssec:bottoma}

Here we recall the proof of existence of bottom pipe dream for type $A$ permutations, due to S.\,Billey and N.\,Bergeron~\cite{BilleyBergeron93} (cf. also~\cite{Knutson12}).
\begin{theorem}
	Let $w\in\Sc_n$. There exists a unique \emph{bottom} pipe dream $D_b\in\PD_{A}(w)$ with all crosses adjusted to the left (i.e., not containing fragments of the form $\elbows\cross$). It can be constructed as follows: for all $i=1,\dots, n$ the $i$-th row of $D_b$ contains exactly $L_i(w)$ left-adjusted crosses (here $L(w)=(L_1(w),\ldots, L_n(w))$ is the Lehmer code of $w$.)
	
	Each pipe dream  $D\in\PD_{A}(w)$ can be brought to $D_b$ by a sequence of {ladder moves}.
\end{theorem}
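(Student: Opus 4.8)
The plan is to establish the theorem in two parts: first, that the described configuration $D_b$ (with $L_i(w)$ left-adjusted crosses in row $i$) is indeed a reduced pipe dream of shape $w$; second, that every $D \in \PD_A(w)$ reduces to $D_b$ by ladder moves, which will also prove uniqueness of the bottom element.

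For the first part, I would argue by induction on $\ell(w)$, or equivalently directly via the combinatorics of the Lehmer code. Recall that $\sum_i L_i(w) = \ell(w)$, so the monomial $\zz^{\beta(D_b)}$ has the correct degree. To see that $\word(D_b)$ is a reduced word for $w$: the crosses in row $i$ occupy diagonals $i, i+1, \dots, i+L_i(w)-1$, contributing the factor $s_{i+L_i(w)-1}\cdots s_{i+1}s_i$ when read right-to-left. Reading rows top to bottom, the total word is $\prod_{i=1}^n (s_{i+L_i(w)-1}\cdots s_i)$. This is a standard fact: this particular product, the \emph{dominant} or \emph{bottom} word associated to the Lehmer code, is a reduced word for $w$. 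One clean way to verify it is to track the strand permutation directly: a left-adjusted configuration has the property that strand entering on the left at row $i$ exits at the top in column equal to $w(i)$, because $L_i(w)$ counts precisely the inversions $(i,j)$ with $j>i$, and in the left-adjusted diagram strand $i$ crosses exactly the strands it must invert and no others. Since the number of crossings equals $\ell(w) = $ the number of inversions, no two strands cross twice, so $D_b$ is reduced of shape $w$.

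For the second part — every $D \in \PD_A(w)$ reaches $D_b$ by ladder moves — I would use the lexicographic monomial order already introduced in the paper: with $z_{n-1}<\cdots<z_1<x_1<x_2<\cdots$, a ladder move strictly \emph{decreases} the monomial (it shifts a cross downward, i.e. from $z_i$ to $z_{i+1}$-type boxes, possibly creating several such moves). Since $\PD_A(w)$ is finite, any maximal sequence of downward ladder moves terminates; it suffices to show the only pipe dream admitting no ladder move is $D_b$. A pipe dream admits a ladder move unless every cross is left-adjusted (this is the content of the forbidden pattern $\elbows\cross$ being absent; one checks that if some row has a non-left-adjusted cross, the local configuration needed to perform a ladder move must be present, using reducedness to rule out the obstructing pattern $\elbows\cross$ appearing above a $\cross\cross$ column). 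So every $D$ reduces to \emph{some} fully left-adjusted pipe dream of shape $w$; but a fully left-adjusted reduced pipe dream is determined by the number of crosses in each row, and for shape $w$ that number in row $i$ is forced to be $L_i(w)$ (again by the inversion-counting argument above, run in reverse). Hence the left-adjusted pipe dream of shape $w$ is unique, equal to $D_b$, and it is the unique minimal element.

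The main obstacle I anticipate is the step asserting that \emph{any} non-left-adjusted reduced pipe dream actually admits a ladder move. The naive local picture — find an $\elbows$ immediately left of a $\cross$, and push — does not by itself produce a ladder move; one needs the full vertical configuration $\elbows\cross$ on top, a column of $\cross\cross$'s below it, and an $\elbows\elbows$ or boundary at the bottom, and one must use reducedness (no two strands cross twice) to guarantee that a column of double crosses of the right height exists between the stray cross and a suitable row. Carefully extracting this ladder from an arbitrary non-left-adjusted diagram is the technical heart of the argument; everything else is bookkeeping with the Lehmer code and the monomial order. Since this is exactly the theorem of Bergeron--Billey, I would follow their argument here, citing~\cite{BilleyBergeron93}, and reserve the detailed verification for the genuinely new types $B$, $C$, $D$ treated in the subsequent subsections.
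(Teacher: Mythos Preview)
Your proposal is correct and follows essentially the same route as the paper: verify that the left-adjusted diagram built from the Lehmer code is reduced of shape $w$, then argue that any non-bottom reduced pipe dream admits a ladder move (using reducedness to exclude the pattern $\elbows\cross$ atop a column of $\cross\cross$'s ending in $\cross\elbows$), with termination immediate since crosses only move downward. The paper's proof is equally terse and, like yours, defers the well-known technical verification to Bergeron--Billey; the only cosmetic difference is that the paper pins down the specific $\elbows\cross$ to attack (lowest row, then leftmost) rather than invoking the monomial order for termination.
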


\begin{proof}
	First observe that every pipe dream without fragments of the form $\elbows\cross$ is reduced.
	
	It is easy to check that if for each  $i=1,\ldots,n$ the  $i$-th row of the pipe dream $D_b$ has $L_i(w)$ crosses adjusted to the left, then $D_b$ has indeed the shape $w\in\Sc_n$. Since a permutation is uniquely determined by its Lehmer code, $D_b$ is the unique bottom pipe dream in $\PD_{A}(w)$.
	
Consider a pipe dream $D\in\PD_{A}(w)$. Since it is reduced, it cannot contain a fragment of the form 
	$\begingroup
	\setlength\arraycolsep{0pt}
	\renewcommand{\arraystretch}{0.07}
	\begin{matrix}
	\elbows & \cross\\
	\cross & \cross\\[3pt]
	\hdotsfor{2}\\[3pt]
	\cross & \cross\\
	\cross & \elbows
	\end{matrix}\endgroup$. If $D$ is not the bottom pipe dream, it contains fragments of the form $\elbows\cross$. Let us take the lowest row containing such fragments; among them, take the leftmost occurence. The cross in it can be shifted downstairs by means of a ladder move. Repeating such an operation, we will obtain the bottom pipe dream for $w$.
\end{proof}
\begin{example}
Let $w=126543\in\Sc_6$. The Lehmer code of this permutation equals $L(w)=(0,0,3,2,1,0)$. Let us show how to bring one of the pipe dreams of shape $w$ to the bottom one.
$$\begingroup
\setlength\arraycolsep{0pt}
\renewcommand{\arraystretch}{0.07}
\begin{matrix}
& 1 & 2 & 3 & 4 & 5 & 6  \\
1 & \elbows & \elbows & \cross & \elbows & \elbows & \elbow & \\
2 & \elbows & \cross & \cross & \elbows & \elbow & \\
3 & \cross & \cross & \cross & \elbow & \\
4 & \elbows & \elbows & \elbow & \\
5 & \elbows & \elbow & \\
6 & \elbow & \\
\end{matrix} \mapsto \;
\begin{matrix}
& 1 & 2 & 3 & 4 & 5 & 6  \\
1 & \elbows & \elbows & \cross & \elbows & \elbows & \elbow & \\
2 & \elbows & \elbows & \cross & \elbows & \elbow & \\
3 & \cross & \cross & \cross & \elbow & \\
4 & \cross & \elbows & \elbow & \\
5 & \elbows & \elbow & \\
6 & \elbow & \\
\end{matrix} \mapsto \;
\begin{matrix}
& 1 & 2 & 3 & 4 & 5 & 6  \\
1 & \elbows & \elbows & \cross & \elbows & \elbows & \elbow & \\
2 & \elbows & \elbows & \elbows & \elbows & \elbow & \\
3 & \cross & \cross & \cross & \elbow & \\
4 & \cross & \cross & \elbow & \\
5 & \elbows & \elbow & \\
6 & \elbow & \\
\end{matrix} \mapsto \;
\begin{matrix}
& 1 & 2 & 3 & 4 & 5 & 6  \\
1 & \elbows & \elbows & \elbows & \elbows & \elbows & \elbow & \\
2 & \elbows & \cross & \elbows & \elbows & \elbow & \\
3 & \cross & \cross & \cross & \elbow & \\
4 & \cross & \cross & \elbow & \\
5 & \elbows & \elbow & \\
6 & \elbow & \\
\end{matrix} \mapsto \;
\begin{matrix}
& 1 & 2 & 3 & 4 & 5 & 6  \\
1 & \elbows & \elbows & \elbows & \elbows & \elbows & \elbow & \\
2 & \elbows & \elbows & \elbows & \elbows & \elbow & \\
3 & \cross & \cross & \cross & \elbow & \\
4 & \cross & \cross & \elbow & \\
5 & \cross & \elbow & \\
6 & \elbow & \\
\end{matrix}
\endgroup$$
\end{example}

\subsection{B-signed pipe dreams}\label{ssec:bottomb}
\begin{theorem}\label{bbottom}
	Let $w\in\Bc_n$. There exists a unique \emph{bottom} b-signed pipe dream $D_b\in\PD_{\Bc_n}(w)$ that satisfies the following conditions:
	\begin{itemize}
		\item $D_b$ does not contain fragments of the form $\elbows\cross$ and $\elbowup\cross$;
		\item The numbers of significant elements in its  $\Gamma$-blocks form a strictly decreasing sequence (ending by zeroes).
	\end{itemize}
	
	This pipe dream $D_b$ can be constructed as follows. Let $(L(w),N(w))$ be the signed Lehmer code of $w$. For each $i=1,\dots, n$ the $i$-th row of the staircase block in $D_b$ contains $L_i(w)$ left-adjusted crosses. And for each $j=1,\dots,s(w)$ the horizontal part of the $j$-th $\Gamma$-block contains $N_j(w)$  left-adjusted significant elements (i.e. an elbow with a faucet and  $N_i(w)-1$ crosses).
	
	Moreover, each $b$-signed pipe dream $D\in\PD_{\Bc_n}(w)$ can be brought to $D_b$ by a sequence of admissible moves.
\end{theorem}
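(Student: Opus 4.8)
The plan is to imitate, with the extra bookkeeping of faucets, the proof of the type~$A$ bottom pipe dream theorem recalled in Section~\ref{ssec:bottoma}, the two bulleted conditions playing the role of ``all crosses left‑adjusted''. \emph{Step 1: the pipe dream $D_b$ built from $(L(w),N(w))$ is reduced of shape $w$.} Exactly as in type~$A$, a $b$-signed pipe dream containing no fragment $\elbows\cross$ and no fragment $\elbowup\cross$ is automatically reduced, so only the shape must be computed. Let $v\in\Sc_n$ be the permutation with Lehmer code $L(w)$ and $u:=wv^{-1}$. By the type~$A$ result the staircase part of $D_b$ is the type~$A$ bottom pipe dream of $v$, so $\word_z(D_b)\in R(v)$; the $j$-th $\Gamma$-block of $D_b$ contributes, read right to left, the word $s_{N_j(w)-1}s_{N_j(w)-2}\cdots s_1s_0$, so $\word_x(D_b)$ is the concatenation $c_{N_{s(w)}}\cdots c_{N_2}c_{N_1}$ with $c_m:=s_{m-1}\cdots s_1s_0$. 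A direct computation gives $c_m(1)=-m$, $c_m(i)=i-1$ for $2\le i\le m$, and $c_m$ fixes everything else; using this, $N_1(w)>N_2(w)>\cdots$, and the facts that $u$ is order‑preserving on $\{1,\dots,n\}$ and that its set of absolute values of negative entries is $\{|w(i)|:1\le i\le n,\ w(i)<0\}=\{N_1(w),\dots,N_{s(w)}(w)\}$, one checks by induction on $s(w)$ that $c_{N_{s(w)}}\cdots c_{N_1}\in R(u)$ and $\ell(u)+\ell(v)=\ell(w)$. Hence $\word(D_b)=\word_x(D_b)\word_z(D_b)$ is a reduced word for $uv=w$, i.e.\ $D_b\in\PD_{\Bc_n}(w)$.

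\emph{Step 2: $D_b$ is the only element of $\PD_{\Bc_n}(w)$ satisfying both conditions.} If $D\in\PD_{\Bc_n}(w)$ satisfies them, then in each $\Gamma$-block all significant elements are left‑adjusted inside the horizontal arm (no $\elbows\cross$ or $\elbowup\cross$), and the corner box~$0$ of a nonempty block can only carry an elbow with a faucet; so $D$ is again of the form described in the statement for some pair $(L',N')$ with $L'$ fitting into the staircase and $N'$ strictly decreasing. By the computation of Step~1 the shape of $D$ then has signed Lehmer code $(L',N')$; since a signed permutation is determined by its signed Lehmer code, shape~$w$ forces $(L',N')=(L(w),N(w))$ and $D=D_b$.

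\emph{Step 3: every $D\in\PD_{\Bc_n}(w)$ reaches $D_b$.} A reduced $b$-signed pipe dream has finitely many significant elements, so we may work in the finite set $\PD^k_{\Bc_n}(w)$ for $k$ large (it contains $D_b$); since $\prec$ strictly decreases the monomial in the lexicographic order of Section~\ref{ssec:lehmer}, it suffices to prove: \emph{if $D\in\PD_{\Bc_n}(w)$ and $D\neq D_b$, then an admissible move takes $D$ to some $D'\prec D$ with $D'\in\PD_{\Bc_n}(w)$}. Given this, iterating downward moves from $D$ terminates (finiteness plus strict monomial decrease) at a shape‑$w$ pipe dream admitting no downward move, which by the claim is $D_b$. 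To prove the claim, note that by Step~2 a pipe dream $D\neq D_b$ of shape $w$ violates one of the two conditions. If a fragment $\elbows\cross$ occurs in the staircase part, its lowest, then leftmost occurrence can be pushed down by a ladder move (a particular case of the first elementary move), which preserves the shape because, $D$ being reduced, the tall‑ladder obstruction of the type~$A$ argument cannot occur. Otherwise the violation lies in the $\Gamma$-part; here only the first and the fifth of the nine elementary moves can occur in type~$B$, since $\crosssign$ and $\elbowssign$ never appear. One then checks that a significant element in the vertical arm of a $\Gamma$-block, a cross preceded by an elbow inside a horizontal arm, or a $\Gamma$-block whose count is not strictly smaller than that of the block above it can always be removed: a corner faucet slides down to a lower empty corner by the fifth move, and a stray cross slides down, possibly into a lower $\Gamma$-block, by the first move (again in a ladder‑like form), reducedness of $D$ excluding exactly the configurations that would change the shape.

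The step I expect to be the real obstacle is the $\Gamma$-block part of Step~3: one must enumerate carefully all the ways a reduced $b$-signed pipe dream of shape~$w$ can fail the ``strictly decreasing counts'' condition — for instance two adjacent $\Gamma$-blocks of equal nonzero count, each carrying a corner faucet — and, for each, identify the correct (possibly composite, ladder‑like) downward move transferring a cross from the upper block into the lower one, verifying in every case that reducedness forbids precisely the patterns under which that move would alter the shape. This parallels the type~$A$ tall‑ladder argument but is genuinely more intricate, because of the faucet bookkeeping and the two arms of each $\Gamma$-block, and because moves now must sometimes carry elements across the ``sea of elbows'' separating consecutive $\Gamma$-blocks.
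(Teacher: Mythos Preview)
Your overall structure matches the paper's: verify $D_b\in\PD_{\Bc_n}(w)$, prove uniqueness via the signed Lehmer code, then reduce an arbitrary $D$ to $D_b$ by admissible moves. Steps~1 and~2 are fine (your Step~1 is in fact more explicit than the paper's ``it is easy to observe''). The genuine gap, which you yourself flag, is in Step~3 for the $\Gamma$-part --- and the technique you are missing is simpler than you anticipate.

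The paper does not split by location (staircase vs.\ $\Gamma$-blocks) but by which condition fails, and the engine in both cases is \emph{strand-following}. For the first condition, take the \emph{lowest} fragment $\elbows\cross$ or $\elbowup\cross$ anywhere in $D$; call its cross box~$a$ and follow the two strands crossing there downward. Because $a$ is lowest, the next adjacent pair of boxes these strands occupy can only be $\cross\cross$ (continue together), $\elbows\elbows$ or $\elbows\elbow$ (stop: call the left box~$b$), but not $\elbows\cross$ (below $a$) and not $\cross\elbows$ --- the latter would mean the strands crossed again with no faucet between, contradicting reducedness, and a faucet would have forced an earlier $\elbows\elbows$. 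So the strands stay adjacent until they reach an elbow box~$b$, and moving the cross from $a$ to $b$ is a single admissible move. Iterating this leaves every $\Gamma$-block with its significant elements left-adjusted in the horizontal arm. For the second condition, after sliding entire blocks into empty ones below, pick the least~$i$ with $\mu_{i+1}\ge\mu_i>0$, take the cross at horizontal position~$\mu_i$ in the $(i{+}1)$-st block, and again follow its two strands down; the crucial observation is that \emph{both} strands acquire a faucet before reaching the staircase, so when they meet at an elbow box~$b$ the move $a\to b$ is again shape-preserving.

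Your plan to handle the staircase by type-$A$ ladder moves and the $\Gamma$-blocks by separate ad hoc arguments forces you to deal explicitly with moves that jump across the sea of elbows between blocks, and with the interplay of faucets in several blocks at once --- this is why you expect a case explosion. The paper's uniform strand-following makes the faucet bookkeeping automatic and collapses the analysis to a short list of local patterns.
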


\begin{proof}
	By definition, the bottom $b$-signed pipe dream $D_b$	satisfies the following conditions:
	\begin{itemize}
		\item All the crosses in the staircase block are left-adjusted. 
		\item All the significant elements in each $\Gamma$-block are located in its horizontal part and are left-adjusted. Hence each nonempty $\Gamma$-block has a faucet in its corner. Denote the number of significant elements in the $i$-th $\Gamma$-block by $\mu_i$.
		\item The total number of faucets equals $s(w)$, which is the number of nonempty $\Gamma$-blocks. Since the numbers of significant elements in $\Gamma$-blocks decrease, we have $n\geq\mu_1>\mu_2>\ldots>\mu_{s(w)}>0$.
	\end{itemize}	
	
It is easy to observe that such b-signed pipe dream $D_b$ with $L_i(w)$ crosses in $i$-th row of the staircase block and $N_j(w)$ significant elements in $j$-th $\Gamma$-block is reduced and has the shape $w$. Since a permutation $w\in\BCc_n$ is uniquely determined by its signed Lehmer code $(L(w),N(w))$, $D_b$ is the unique bottom b-signed pipe dream in $\PD_{B}(w)$.  

It remains to show that each non-bottom $b$-signed pipe dream $D\in\PD_{\Bc_n}(w)$ can be brought to the bottom one by shifting significant elements down.

Suppose that $D$ contains a fragment  $\elbows\cross$ or $\elbowup\cross$. Consider the lowest of them; let $a$ be its box containing a cross. Let us go down along the strands crossing at $a$, and consider the next pairs of adjacent boxes containing these strands. They can look as follows:
\begin{itemize}
	\item $\elbows\elbow$ or $\elbows\elbows$. Let  $b$  be the left box of the first such pair. Then the strands cross at $a$ and ``nearly meet'' in $b$. Note that if one of these strands has a faucet on it, it is located below $b$. Thus we can move the cross from $a$ into $b$ without changing the shape of our b-signed pipe dream.
	\item $\cross\cross$: in this case the two strands continue passing next to each other.
	\item $\elbows\cross$ cannot occur, because $a$ was the lowest box with such a condition.
	\item $\cross\elbows$ only can occur if one of the strands had a faucet on it (otherwise $D$ is nonreduced). But before the faucet the corresponding strand should turn left, hence there is a fragment of the form $\elbows\elbows$ before it, and this case was already considered.
\end{itemize}

Summarizing, we see that the two strands crossing in $a$ follow next to each other and do not contain faucets before passing through a fragment of the form $\elbows\elbows$ or $\elbows\elbow$. Since each strand contains at least one elbow in the staircase block, we necessarily obtain such a fragment. So the cross from $a$ can be moved down:
$$\begingroup
\setlength\arraycolsep{0pt}
\renewcommand{\arraystretch}{0.07}
\begin{matrix}
&&&&&&\elbowup&\cross\\
&&&&&\elbowup&\elbows&\elbow\\[4pt]
&&&&\hdotsfor{3}\\[4pt]
&&&\elbowup&\elbows&\elbow&\\
&&&\cross&\cross&&\\
&&\elbowup&\elbows&\elbow&&\\[4pt]
&\hdotsfor{3}\\[4pt]
\elbowup&\elbows&\elbow&&&\\
\cross&\cross&&&&\\
\elbows&\elbow&&&&
\end{matrix}\mapsto\begin{matrix}
&&&&&&\elbowup&\elbows\\
&&&&&\elbowup&\elbows&\elbow\\[4pt]
&&&&\hdotsfor{3}\\[4pt]
&&&\elbowup&\elbows&\elbow&\\
&&&\cross&\cross&&\\
&&\elbowup&\elbows&\elbow&&\\[4pt]
&\hdotsfor{3}\\[4pt]
\elbowup&\elbows&\elbow&&&\\
\cross&\cross&&&&\\
\cross&\elbow&&&&
\end{matrix}\endgroup
$$
This allows us to bring $D$ to the following form: all significant elements in the $\Gamma$-blocks  are located in their horizontal parts and are left-adjusted. 

Now suppose that the number of significant elements in $\Gamma$-blocks does not form a strictly decreasing sequence. Let these blocks have $\mu_1,\mu_2,\ldots$  significant elements and let  $i$ be the minimal number satisfying $\mu_{i+1}\geq \mu_i>0$ or $\mu_{i+1}>0, \mu_{i}=0$. 

If $\mu_i=0$ and $\mu_{i+1}>0$, we can move all the elements from the $(i+1)$-st block into the $i$-th one; obviously, this does not change the shape of our pipe dream. Thus we can reduce the situation to the case $\mu_1>0,\mu_2>0,\ldots,\mu_{s(w)}>0$, with all the following blocks being empty.

In the next examples we will draw the horizontal parts of the $\Gamma$-blocks adjacent to each other, ignoring the elbows between them.

A reduced pipe dream cannot contain a fragment of the form
$\begingroup
\setlength\arraycolsep{0pt}
\renewcommand{\arraystretch}{0.07}
\begin{matrix}
&\elbowsign\\
\elbowsign&\elbows
\end{matrix}\endgroup$, so the horizontal parts of all non-empty $\Gamma$-blocks except the top one must contain at least two crosses. Also, reduced pipe cannot contain $\begingroup
\setlength\arraycolsep{0pt}
\renewcommand{\arraystretch}{0.07}
\begin{matrix}
&&\elbowsign\\
&\elbowsign&\cross\\
\elbowsign&\cross&\elbows
\end{matrix}\endgroup$, so the horizontal parts of all non-empty $\Gamma$-blocks except the top two must contain at least three crosses. Proceeding in this way further, we obtain $\mu_j\geq s(w)-j+1$.

Now consider the box in the  $(i+1)$-th $\Gamma$-block on the $\mu_i$-th position, counted from the left; denote this box by $a$. Since $\mu_{i+1}\geq\mu_i$, this box contains a cross. Consider the pair of strands crossing in $a$ and follow then down.

It is clear that both of these strands have faucets; moreover, they enter the staircase block at adjacent positions. Similarly to the previous case we show that they pass through a box $b$ of the staircase block containing an elbow $\elbows$. Since both strands have a faucet between $a$ and $b$, we can move the cross from $a$ to $b$ without changing the shape of our b-signed pipe dream, as shown on the diagram below:
$$
\begingroup
\setlength\arraycolsep{0pt}
\renewcommand{\arraystretch}{0.07}
\begin{matrix}
&&\elbowsign&\cross&\cross&\cross&\elbow\\
&\elbowsign&\cross&\cross&\elbows&\elbow&\\[4pt]
&\hdotsfor{2}&&&&\\[4pt]
&\cross&\cross&&&&\\
&\elbows&\elbow&&&&\\
\end{matrix}\mapsto
\begin{matrix}
&&\elbowsign&\cross&\elbows&\cross&\elbow\\
&\elbowsign&\cross&\cross&\elbows&\elbow&\\[4pt]
&\hdotsfor{2}&&&&\\[4pt]
&\cross&\cross&&&&\\
&\cross&\elbow&&&&\\
\end{matrix}\endgroup
$$

Repeating these operations, we will obtain the bottom $b$-signed pipe dream $D_b$.
\end{proof}

\subsection{C-signed pipe dreams}\label{ssec:bottomc}

The case of $c$-signed pipe dreams is treated similarly to the case of $b$-signed pipe dreams. For $w\in\Cc_n$ the bottom pipe dream $D_b\in\PD_{\Cc_n}(w)$ looks like the bottom pipe dream $D'_b\in\PD_{\Bc_n}(w)$, with one exception: all the elbows with faucets are located in the vertical parts of the $\Gamma$-blocks. Let us state an analogue of Theorem~\ref{bbottom}:

\begin{theorem}\label{cbottom}
Let $w\in\BCc_n$. There exists a unique \emph{bottom} c-signed pipe dream $D_b\in\PD_{\Cc_n}(w)$ with the following properties:
	\begin{itemize}
			\item all the crosses in the staircase block are left-adjusted;
		\item every nonempty $\Gamma$-block contains an elbow with a faucet in the vertical part of the block;
		\item all the crosses in the $\Gamma$-blocks are located in the horizontal parts and left-adjusted: the box indexed by $0$ contains an elbow $\elbowup$ and is followed by a sequence of crosses;
		\item the numbers of significant elements in $\Gamma$-blocks form a strictly decreasing sequence.
	\end{itemize}
	This pipe dream $D_b$ has $L_i(w)$ crosses in the $i$-th row of the staircase block and $N_j(w)$ significant elements in the $j$-th $\Gamma$-block (here $(L(w),N(w))$ is the signed Lehmer code of $w\in\BCc_n$).
	
	Any $c$-signed pipe dream  $D\in\PD_{\Cc_n}(w)$ can be brought to $D_b$ by a sequence of admissible moves.
	\end{theorem}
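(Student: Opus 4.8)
The plan is to mirror the proof of Theorem~\ref{bbottom} almost verbatim. The only structural difference in type~$C$ is that the corner of a $\Gamma$-block is no longer a single box but splits into the topmost box of the vertical part and the leftmost box of the horizontal part, both indexed by $0$; placing the faucet $\elbowsign$ in the former rather than the latter is precisely admissible move~5, and moving a cross lower in a column is admissible move~1. Thus, just as only the first of the nine admissible moves is relevant in type~$A$, only moves~1 and~5 are relevant for $c$-signed pipe dreams.

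First I would check that the $D_b$ described is a reduced $c$-signed pipe dream of shape $w$. As in the type-$B$ argument, the staircase block with $L_i(w)$ left-adjusted crosses in row $i$, together with the $j$-th $\Gamma$-block carrying a faucet in box~$0$ of its column followed by $N_j(w)-1$ left-adjusted crosses in the horizontal part, realizes exactly $w$, and the associated word is reduced, so $D_b\in\PD_{\Cc_n}(w)$. Since a signed permutation is uniquely determined by its signed Lehmer code $(L(w),N(w))$, the pipe dream $D_b$ is the unique element of $\PD_{\Cc_n}(w)$ satisfying the four listed properties.

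Next, given an arbitrary $D\in\PD_{\Cc_n}(w)$, I would bring it to $D_b$ in the same three stages as in Theorem~\ref{bbottom}. \textbf{(i)} As long as $D$ contains a fragment $\elbows\cross$ or $\elbowup\cross$, pick the lowest, leftmost such occurrence; following downward the two strands that cross at this box, they either remain adjacent (a $\cross\cross$ fragment) or ``nearly meet'' at an $\elbows\elbows$ or $\elbows\elbow$ fragment, possibly after one of them passes a faucet, and since each strand necessarily contains an elbow in the staircase block such a meeting box always exists. The cross can then be slid down into it by a chain of move-1's without changing the shape. This left-adjusts every cross in the staircase and pushes every $\Gamma$-block cross into the horizontal part, left-adjusted. \textbf{(ii)} Using move~5, slide the elbow-with-faucet of each nonempty $\Gamma$-block to box~$0$ of its vertical part. \textbf{(iii)} If the numbers $\mu_1,\mu_2,\dots$ of significant elements in the $\Gamma$-blocks are not strictly decreasing, first merge an empty block into the nonempty one below it whenever needed; then take the smallest index $i$ with $\mu_{i+1}\ge\mu_i>0$. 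Reducedness of $D$ forbids the patterns with two faucets ``nearly meeting'' (the analogues of the type-$B$ staircase-of-faucets patterns), which forces $\mu_j\ge s(w)-j+1$ and, in particular, guarantees a cross in the $(i+1)$-st $\Gamma$-block in the column just right of position $\mu_i$; the two strands crossing there both carry faucets between that box and the staircase, so the cross can be moved into an elbow of the staircase block. Iterating terminates at $D_b$, since each admissible move strictly decreases $\xx^{\alpha(D)}\zz^{\beta(D)}$ in the lexicographic order fixed in Section~\ref{ssec:lehmer} and there are only finitely many pipe dreams of bounded monomial degree.

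The main obstacle I anticipate is the bookkeeping in stages (i) and (iii) with the block genuinely $\Gamma$-shaped rather than triangular: one must verify that a strand leaving the horizontal part of a block and turning down its vertical part, or a pair of strands with faucets between two crossings, still produces one of the admissible local configurations, and that moving a faucet across the split corner never creates one of the forbidden patterns. These are the same case analyses as in type~$B$, but the non-overlapping column-and-row geometry has to be re-examined; I expect no genuinely new phenomenon, only a few more cases to record.
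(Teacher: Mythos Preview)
Your proposal is correct and follows exactly the approach the paper takes: the paper's own proof of Theorem~\ref{cbottom} is a single sentence stating that the argument is the same as for Theorem~\ref{bbottom}, with the sole difference that the faucets end up in the vertical part of the $\Gamma$-blocks rather than at the (now non-existent) shared corner. Your three-stage outline simply makes this explicit, and your observation that only admissible moves~1 and~5 are relevant in type~$C$ (since $\crosssign$ and $\elbowssign$ do not occur) is the correct refinement; the minor ordering issue of whether stage~(ii) precedes or is interleaved with stage~(iii) is harmless, since swapping the faucet between the two $0$-boxes of a $\Gamma$-block leaves the word $\word(D)$ literally unchanged.
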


\subsection{D-signed pipe dreams}\label{ssec:bottomd}

\begin{theorem}\label{dbottom}
	Let $w\in\Dc_n$. There exists a unique \emph{bottom} d-signed pipe dream $D_b\in\PD_{\Dc_n}(w)$ satisfying the following properties:
	\begin{itemize}
		\item $D_b$ does not contain fragments of the form $\elbows\cross$ (however, $\elbowup\cross$ may occur);
		\item  the numbers of significant elements in the $\Gamma$-blocks form a strictly decreasing sequence (ending by zeroes);
		\item The $1'$-boxes of nonempty $\Gamma$-blocks contain crosses $\cross$ and crosses with faucets $\crosssign$. These elements alternate: the blocks with odd numbers contain crosses with faucets $\crosssign$, while those with even numbers contain ordinary crosses $\cross$.
	\end{itemize} 
	This pipe dream $D_b$ can be constructed as follows. Let $(L(w), N(w))$ be the signed Lehmer code for $w\in\Dc_n$ and let
	\[
	\mu(w)=\begin{cases}
	\left(N_1(w)-1, N_2(w)-1,\ldots, N_{s(w)}(w)-1\right),&\text{ if } N_{s(w)}(w)>1;\\
	\left(N_1(w)-1, N_2(w)-1,\ldots, N_{s(w)-1}(w)-1\right),&\text{ if } N_{s(w)}(w)=1.
	\end{cases}
	\]
	For each $i=1,\dots,n$ the $i$-th row of the staircase block contains $L_i(w)$ left-adjusted crosses. For each $j=1,\dots, \ell(\mu)$ the horizontal part of $j$-th $\Gamma$-block contains $\mu_j(w)$ left-adjusted crosses. If $j$ is odd, then there is a faucet on the cross in the corner box.
	
	Moreover, each $d$-signed pipe dream $D\in\PD_{\Dc_n}(w)$ can be brought to $D_b$ by a sequence of admissible moves.
\end{theorem}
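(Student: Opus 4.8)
The plan is to follow the proof of Theorem~\ref{bbottom} step by step, inserting the extra analysis required by the $1'$-boxes. First I would verify that the diagram $D_b$ described explicitly via the signed Lehmer code is a reduced $d$-signed pipe dream of shape $w$: reading a word $\word^p(D_b)$ block by block, the staircase block contributes the type-$A$ subword associated with $L(w)$ (as in Section~\ref{ssec:pdtypea}), while each nonempty $\Gamma$-block contributes a ``valley'' subword whose minimal letter sits in the $1'$-corner and equals $s_{\hat 1}$ for odd-numbered blocks and $s_1$ for even-numbered ones; a direct computation on one-line notation then shows this word represents the element of $\Dc_n$ whose signed Lehmer code is $(L(w),N(w))$, which is $w$, since a signed permutation is determined by its signed Lehmer code. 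For uniqueness I would argue conversely: if $D\in\PD_{\Dc_n}(w)$ satisfies the three bottom conditions, then left-adjustedness, the alternating corners, and the strictly decreasing block sizes, compared against $(L(w),N(w))$, force the staircase rows to have lengths $L_1(w),\dots,L_n(w)$ and the $\Gamma$-blocks to have horizontal parts of lengths $\mu_1(w),\mu_2(w),\dots$; the two cases in the definition of $\mu(w)$ arise precisely according to whether the smallest entry $N_{s(w)}(w)$ of $N(w)$ equals $1$, i.e. whether the last sign change occupies a full block or only a corner. Hence $D=D_b$.

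The bulk of the work is the reachability statement, and here I would proceed in the same three phases as for type $B$. \emph{Phase 1: flatten everything into the horizontal parts.} Whenever an $\elbows\cross$ or $\elbows\crosssign$ fragment occurs, take the lowest and then leftmost such occurrence and follow the two strands crossing there downward; each strand meets an elbow in the staircase block, so the strands eventually ``nearly meet'' at an $\elbows\elbows$ or $\elbows\elbow$ fragment, and---after checking that the faucet/sign data carried by the two strands between the two boxes matches one of the nine admissible patterns (the second and third moves when a $\crosssign$ is involved, the sixth through ninth moves when an $\elbowssign$ is involved)---I would slide the significant element down. \emph{Phase 2: normalize the corners.} Using the sixth through ninth moves, a $\crosssign$ directly above a $\cross$ in consecutive $\Gamma$-blocks merges into an $\elbowssign$ and re-splits the other way, which lets me shuffle the sign-changing elements so that the corners of the nonempty blocks form an alternating $\crosssign,\cross,\crosssign,\dots$ pattern; reducedness---the eight forbidden configurations of Figure~\ref{D-forbidden}, e.g. two $\crosssign$-corners with matching faucet data, or an $\elbowssign$ over an $\elbows$---forces the pattern to start with $\crosssign$. \emph{Phase 3: make the block sizes strictly decreasing.} Exactly as in Theorem~\ref{bbottom}, if $\mu_{i+1}\ge\mu_i>0$ for the first such index $i$ (the case $\mu_i=0<\mu_{i+1}$ being handled by merging blocks), the cross at position $\mu_i$ of block $i+1$ exists---by the type-$D$ analogue of the inequality $\mu_j\ge s(w)-j+1$ used in the type-$B$ proof---and its two strands can be followed down to an elbow of the staircase block, where I slide it. To see the whole process terminates I would use the device of Section~\ref{ssec:lehmer}: every admissible move strictly decreases the monomial $\xx^{\alpha}\zz^{\beta}$ in the lexicographic order induced by $z_{n-1}<\cdots<z_1<x_1<x_2<\cdots$, so after finitely many moves none applies, and at that point the diagram satisfies the three bottom conditions, hence equals $D_b$.

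The hard part will be Phase 2 together with its interaction with Phase 3: I expect the main obstacle to be showing that the merge/split moves on $\crosssign$ and $\elbowssign$ can always be orchestrated so that the parity pattern of the corners, the strictly decreasing block sizes, and the correct total count of sign-changing elements are achieved simultaneously---and in particular handling the boundary case $N_{s(w)}(w)=1$, where the final sign change is carried by a bare corner instead of a block of positive horizontal length---while never inadvertently producing one of the eight forbidden patterns of Figure~\ref{D-forbidden}. Equivalently, the crux is to prove that any reduced $d$-signed pipe dream other than $D_b$ admits \emph{some} admissible move lowering the lexicographic monomial; once that is established, the weight function finishes the argument by descent.
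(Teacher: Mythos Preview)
Your proposal is correct and follows essentially the same architecture as the paper: verify that the explicit $D_b$ is reduced of shape $w$, deduce uniqueness from the signed Lehmer code, and prove reachability by a multi-phase reduction (flatten, normalize corners, enforce strictly decreasing block sizes), with termination guaranteed by the lexicographic monomial decrease from Section~\ref{ssec:lehmer}.

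One point worth sharpening: in your Phase~2 you propose to normalize the $1'$-corners by locally merging/re-splitting via $\elbowssign$ among consecutive $\Gamma$-blocks. The paper instead takes the lowest block whose corner violates the alternation and pushes the offending element (a misplaced $\cross$, $\crosssign$, or one half of an $\elbowssign$) \emph{all the way down into the staircase block}, using the same ``follow the two strands to an elbow'' argument as in Theorem~\ref{bbottom}. This is more direct and avoids having to track the global parity of corners during a shuffle; in particular it makes it transparent that an admissible move is always available whenever alternation fails, and it handles the $\mu_i=1$ boundary case (merging two adjacent corners into a single $\elbowssign$) as a separate explicit step in Phase~3 rather than as part of the corner normalization. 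Adopting this version of Phase~2 will make the ``hard part'' you flagged essentially routine.
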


\begin{proof} The proof is similar to the proof of Theorem~\ref{bbottom}. 
	
	The bottom $d$-signed pipe dream $D_b$ constructed as in the statement of the theorem is reduced and has the shape $w\in\Dc_n$. Since the number of sign changes in  $w\in\Dc_n$ is even, the partition $\mu(w)$ uniquely determines $N(w)$ and each permutation $w\in\Dc_n$ is uniquely determined by the pair $(L(w),\mu(w))$. Thus, $D_b$ is the unique bottom d-signed pipe dream in $\PD_{D}(w)$.

Let us show how to reduce a $d$-signed pipe dream $D\in\PD_{\Dc_n}(w)$ to $D_b$. 

The cross from the lowest fragment $\elbows\cross$ can be moved down: to do this, we follow the two crossing strands down; since they are not the leftmost, these two strands cannot contain the fragment of the form $\crosssign\cross$. Then we proceed similarly to the proof of Theorem~\ref{bbottom}.

These operations can bring our pipe dream to the following form: the significant elements are only in the horizontal parts of the $\Gamma$-blocks; in each row they are left-adjusted. Further in the examples we draw the horizontal parts of the blocks one under another, ignoring the elbows between them.

If certain $\Gamma$-block is empty, we can move to it all the significant elements of the block above it. So we can suppose that all the empty blocks are located above the nonempty ones.

Now look at the elements in the boxes of the $\Gamma$-blocks indexed by $1'$. In the odd blocks we should have crosses with faucets $\crosssign$, and in the even ones crosses $\cross$. Suppose this is not true; take the lowest ($i$-th) block where this does not hold, and distinguish between the following possibilities:
\begin{itemize}
	\item The number $i$ is odd, and there is a cross $\cross$ in the corner of the $i$-th block. In the corner of the  $(i-1)$-st block there is also a cross $\cross$.  Since $D$ is reduced, the $(i-1)$-st block also contains at least one more cross. Then we can move the cross from the corner of the $i$-th block down similarly to Theorem~\ref{bbottom} (since both strands do not have faucets on them):
	$$
	\begingroup
	\setlength\arraycolsep{0pt}
	\renewcommand{\arraystretch}{0.07}
	\begin{matrix}
	&&\elbowup&\cross&\\
	&\elbowup&\cross&\cross\\
	\elbowup&\crosssign&\cross&\cross\\[4pt]
	&&\hdotsfor{2}&\\[4pt]
	&&\cross&\cross&\\
	&&\elbows&\elbow&\\
	\end{matrix}\mapsto
	\begin{matrix}
	&&\elbowup&\elbows&\\
	&\elbowup&\cross&\cross\\
	\elbowup&\crosssign&\cross&\cross\\[4pt]
	&&\hdotsfor{2}&\\[4pt]
	&&\cross&\cross&\\
	&&\cross&\elbow&\\
	\end{matrix}\endgroup
	$$
	\item The number  $i$ is even, and there is a cross with a faucet in the corner of the $i$-th block (denote this box by $a$). The corner of the $(i-1)$-st block also contains an element $\crosssign$. Since  $D$ is reduced, the  $(i-1)$-st block should contain at least one more cross. Similarly to the previous cases we can show that the strands crossing at $a$ also ``nearly meet'' at some other box $b$ containing an elbow $\elbows$. Then we can replace the cross with a faucet at $a$ by an elbow joint and replace the elbow at $b$ by a cross; this would preserve the shape of the permutation:
	$$
	\begingroup
	\setlength\arraycolsep{0pt}
	\renewcommand{\arraystretch}{0.07}
	\begin{matrix}
	&&\elbowup&\crosssign&\\
	&\elbowup&\crosssign&\cross\\
	\elbowup&\cross&\cross&\cross\\[4pt]
	&&\hdotsfor{2}&\\[4pt]
	&&\cross&\cross&\\
	&&\elbows&\elbow&\\
	\end{matrix}\mapsto
	\begin{matrix}
	&&\elbowup&\elbows&\\
	&\elbowup&\crosssign&\cross\\
	\elbowup&\cross&\cross&\cross\\[4pt]
	&&\hdotsfor{2}&\\[4pt]
	&&\cross&\cross&\\
	&&\cross&\elbow&\\
	\end{matrix}\endgroup
	$$
	\item The number  $i$ is odd, there is  an elbow with two faucets  $\elbowssign$ in the corner of the $i$-th block: 
	$$\begingroup
	\setlength\arraycolsep{0pt}
	\renewcommand{\arraystretch}{0.07}
	\begin{matrix}
	&&\elbowup&\elbowssign&\\
	&\elbowup&\cross&\cross\\
	\elbowup&\crosssign&\cross&\cross\\[4pt]
	&&\hdotsfor{2}&\\[4pt]
	&&\cross&\cross&\\
	&&\elbows&\elbow&\\
	\end{matrix}\mapsto
	\begin{matrix}
	&&\elbowup&\crosssign&\\
	&\elbowup&\cross&\cross\\
	\elbowup&\crosssign&\cross&\cross\\[4pt]
	&&\hdotsfor{2}&\\[4pt]
	&&\cross&\cross&\\
	&&\cross&\elbow&\\
	\end{matrix}\endgroup
	$$ 
		\item The number $i$ is even, there is  an elbow with two faucets  $\elbowssign$ in the corner of the $i$-th block: 
	$$\begingroup
	\setlength\arraycolsep{0pt}
	\renewcommand{\arraystretch}{0.07}
	\begin{matrix}
	&&\elbowup&\elbowssign&\\
	&\elbowup&\crosssign&\cross\\
	\elbowup&\cross&\cross&\cross\\[4pt]
	&&\hdotsfor{2}&\\[4pt]
	&&\cross&\cross&\\
	&&\elbows&\elbow&\\
	\end{matrix}\mapsto
	\begin{matrix}
	&&\elbowup&\cross&\\
	&\elbowup&\crosssign&\cross\\
	\elbowup&\cross&\cross&\cross\\[4pt]
	&&\hdotsfor{2}&\\[4pt]
	&&\cross&\cross&\\
	&&\cross&\elbow&\\
	\end{matrix}\endgroup
	$$
\end{itemize}

Now suppose that the numbers of significant elements in the $\Gamma$-blocks do not form a dercreasing sequence. Let these blocks contain $\mu_1,\mu_2,\ldots$ significant elements, and let  $i$ be the least number such that $\mu_{i+1}\geq \mu_i>0$. 

If $\mu_i=1$, the crosses  $\crosssign$ and $\cross$ from the two neighboring corners can be shifted into one elbow joint $\elbowssign$:
	$$\begingroup
\setlength\arraycolsep{0pt}
\renewcommand{\arraystretch}{0.07}
\begin{matrix}
\elbowup&\crosssign\\
\cross&\elbows\\
\end{matrix}\mapsto\begin{matrix}
\elbowup&\elbows\\
\elbowssign&\elbows\\
\end{matrix}\endgroup
$$
	$$\begingroup
\setlength\arraycolsep{0pt}
\renewcommand{\arraystretch}{0.07}
\begin{matrix}
\elbowup&\cross\\
\crosssign&\elbows\\
\end{matrix}\mapsto\begin{matrix}
\elbowup&\elbows\\
\elbowssign&\elbows\\
\end{matrix}\endgroup
$$

Otherwise consider the box $a$ in the $(i+1)$-st $\Gamma$-block at the  $\mu_i$-th position counted from the left. Similarly to Theorem~$\ref{bbottom}$, the cross at this box can be moved down:
$$\begingroup
\setlength\arraycolsep{0pt}
\renewcommand{\arraystretch}{0.07}
\begin{matrix}
&&\elbowup&\cross&\cross&\cross&\elbow\\
&\elbowup&\crosssign&\cross&\elbows&\elbow&\\[4pt]
&\hdotsfor{2}&&&&\\[4pt]
&\cross&\cross&&&&\\
&\elbows&\elbow&&&&\\
\end{matrix}\mapsto
\begin{matrix}
&&\elbowup&\cross&\elbows&\cross&\elbow\\
&\elbowup&\crosssign&\cross&\elbows&\elbow&\\[4pt]
&\hdotsfor{2}&&&&\\[4pt]
&\cross&\cross&&&&\\
&\cross&\elbow&&&&\\
\end{matrix}\endgroup
$$
$$\begingroup
\setlength\arraycolsep{0pt}
\renewcommand{\arraystretch}{0.07}
\begin{matrix}
&&&\elbowup&\crosssign&\cross&\cross&\elbow\\
&&\elbowup&\cross&\cross&\elbows&\elbow&\\
&\elbowup&\crosssign&\cross&\cross&\elbow&\\[4pt]
&&\hdotsfor{2}&&&&\\[4pt]
&&\cross&\cross&&&&\\
&&\elbows&\elbow&&&&\\
\end{matrix}\mapsto
\begin{matrix}
&&&\elbowup&\crosssign&\elbows&\cross&\elbow\\
&&\elbowup&\cross&\cross&\elbows&\elbow&\\
&\elbowup&\crosssign&\cross&\cross&\elbow&\\[4pt]
&&\hdotsfor{2}&&&&\\[4pt]
&&\cross&\cross&&&&\\
&&\cross&\elbow&&&&\\
\end{matrix}\endgroup
$$

Let us summarize the previous discussion. Consider a d-signed pipe dream $D\in\PD_{\Dc_n}(w)$. If it is not the bottom pipe dream, we can bring one of the significant elements down:
\begin{itemize}
	\item if $D$ has a fragment $\elbows\cross$, we can move the cross from the lowest of such fragments;
	\item if $D$ has no $\elbows\cross$,  but certain nonempty block is above an empty one, we can shift all the elements from this block;
	\item if  $D$ has no $\elbows\cross$, all the empty blocks are above the nonempty ones, but at some of the boxes $1'$ the alternation of crosses $\crosssign$ and $\cross$ is violated, we can move down the element from this box; 
	\item if $D$ has no $\elbows\cross$, all the empty blocks are above the nonempty ones, the crosses $\crosssign$ and $\cross$ at the boxes $1'$ are alternating, but the number of significant elements is not decreasing, we can move the lowest cross that violates this rule.
\end{itemize}
\end{proof}


The bottom pipe dream of $w\in\Fc_\infty$ corresponds to the lexicographically highest monomial of $\Ff_w$. This immediately implies the result by Billey and Haiman that Schubert polynomials form a basis (cf. \cite[Thm 3, Thm 4]{BilleyHaiman95}).

\begin{theorem}\label{thm:BCDbasis} Each family of Schubert polynomials $\{\Bf_w\mid w \in \BCc_\infty\}$, $\{\Cf_w\mid w \in \BCc_\infty\}$, $\{\Df_w\mid w \in \Dc_\infty\}$ forms a $\QQ$-basis in $\QQ[\zz, p_1(\xx),p_3(\xx),\ldots]$.
\end{theorem}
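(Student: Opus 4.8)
The plan is to deduce Theorem~\ref{thm:BCDbasis} from the existence of bottom pipe dreams (Theorems~\ref{bbottom}, \ref{cbottom}, \ref{dbottom}) together with the combinatorial formulas for Schubert polynomials (Corollary~\ref{infinite}), by an upper-triangularity argument with respect to the lexicographic monomial order introduced in Section~\ref{ssec:lehmer}. Recall that order: we set $z_{n-1}<\dots<z_1<x_1<x_2<\dots$ and take the induced lexicographic order $>$ on monomials in $\QQ[\zz,p_1(\xx),p_3(\xx),\dots]$ (using that every element of this ring, after expressing $p_k$ in the $x_i$, is a polynomial in the $z_i$ and $x_i$, so monomials make sense). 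Since every Schubert polynomial of type $B$, $C$ or $D$ is a sum $\sum_{D\in\PD_{\Fc_n}(w)}\xx^{\alpha(D)}\zz^{\beta(D)}$ and since $D\prec D'$ implies $\xx^{\alpha(D)}\zz^{\beta(D)}<\xx^{\alpha(D')}\zz^{\beta(D')}$, the leading monomial of $\Ff_w$ (for the order $>$) is exactly the monomial attached to the \emph{bottom} pipe dream $D_b(w)$, which is the unique $\prec$-maximal element of $\PD_{\Fc_n}(w)$.

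The key steps, in order, are as follows. First I would make precise that each of the three families consists of homogeneous elements of $R:=\QQ[\zz,p_1(\xx),p_3(\xx),\dots]$ and that the monomial order $>$ restricted to any fixed total degree is a total order with finite down-sets on the (finitely many, in each degree, once one also fixes the number of $z$-variables --- but here one must be slightly careful, see below) monomials; more robustly, I would argue degree by degree and, within a degree, note that the leading monomials determine linear independence without needing finiteness. Second, I would show the map $w\mapsto \mathrm{LM}(\Ff_w)$ (leading monomial) is injective: by Theorems~\ref{bbottom}--\ref{dbottom} the bottom pipe dream $D_b(w)$ is built directly from the signed Lehmer code $(L(w),N(w))$ (or $(L(w),\mu(w))$ in type $D$), and conversely the monomial $\xx^{\alpha(D_b(w))}\zz^{\beta(D_b(w))}$ records, via the exponents of the $z_i$, the entries $L_i(w)$ of the Lehmer code, and via the exponents of the $x_i$ (the sizes of the $\Gamma$-blocks) the entries $N_j(w)$ (respectively $\mu_j(w)$); since a signed permutation is uniquely determined by its signed Lehmer code, distinct $w$ give distinct leading monomials. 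Third, I would invoke the standard linear-algebra fact: a family $\{\Ff_w\}$ of elements of $R$ with pairwise distinct leading monomials (with respect to a monomial order that is a well-order on each graded piece one cares about) is linearly independent over $\QQ$. Fourth, for spanning I would argue that every monomial $\xx^\alpha\zz^\beta$ arises as $\mathrm{LM}(\Ff_w)$ for a unique $w$ --- indeed, given such a monomial, read off a candidate pair $(L,N)$ (or $(L,\mu)$): the $z$-exponents must be a valid Lehmer code, i.e.\ weakly bounded as $L_i\le n-i$, which one can always arrange by enlarging $n$, and the $x$-exponents must be strictly decreasing and appropriately bounded, which is exactly the shape of a bottom pipe dream; this recovers a signed permutation $w$ (in type $D$ one needs the parity constraint, handled by the two cases in the definition of $\mu(w)$), and then $\mathrm{LM}(\Ff_w)=\xx^\alpha\zz^\beta$. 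Hence the leading monomials of $\{\Ff_w\}$ are \emph{all} monomials, each exactly once, and by triangularity $\{\Ff_w\}$ is a basis.

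The main obstacle I anticipate is not the triangularity bookkeeping but making the leading-monomial argument rigorous in a ring that is genuinely infinite-dimensional in each degree: in fixed total degree $d$ there are infinitely many monomials in $z_1,z_2,\dots,x_1,x_2,\dots$, and the lexicographic order $>$ with $z_{n-1}<\dots<z_1<x_1<x_2<\dots$ is \emph{not} a well-order on them (e.g.\ $x_1>x_2>x_3>\cdots$ is an infinite descending chain). The clean fix is to work with truncations: fix $k$ and $n$, restrict to $\PD^k_{\Fc_n}(w)$ and to the polynomial ring $\QQ[z_1,\dots,z_{n-1},x_1,\dots,x_k]$, where the relevant monomials of bounded degree form a finite set and the order is a genuine well-order; the bottom pipe dream $D_b(w)$ lies in $\PD^k_{\Fc_n}(w)$ for all $k$ large enough (depending on $w$), so $\mathrm{LM}(\Ff^{[k]}_w)$ stabilizes and equals $\mathrm{LM}(\Ff_w)$ in the evident sense. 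Thus for any finite subset $W\subset\Fc_\infty$ one chooses $n,k$ large enough to accommodate all $w\in W$, runs the finite-dimensional triangularity argument there, and concludes linear independence of $\{\Ff_w\mid w\in W\}$; since $W$ was arbitrary, the whole family is independent. Spanning is likewise checked on each truncated ring $\QQ[\zz,x_1,\dots,x_k]$ and then one passes to the inverse limit, exactly as in Corollary~\ref{infinite}. Modulo this standard but necessary care about truncation, the theorem follows, recovering \cite[Thm 3, Thm 4]{BilleyHaiman95}.
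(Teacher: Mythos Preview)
Your linear-independence half is essentially the paper's: identify the leading monomial of $\Ff_w$ with the monomial of the bottom pipe dream and use that distinct $w$ have distinct signed Lehmer codes. (Two small slips: $D_b(w)$ is the unique $\prec$-\emph{minimal} element, not maximal; and with the order of Section~\ref{ssec:lehmer} its monomial is the \emph{smallest}, not the leading one. The paper silently reverses the order in the proof of Theorem~\ref{thm:BCDbasis} so that $D_b$ gives the highest term. Either convention works, but you should pick one and stick with it.)

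The spanning half has a genuine gap. You assert that ``the leading monomials of $\{\Ff_w\}$ are \emph{all} monomials,'' but a few words earlier you correctly observe that the $\xx$-exponents of any such leading monomial are strictly decreasing, i.e.\ only monomials $\xx^{\alpha}\zz^{\beta}$ with $\alpha_1>\alpha_2>\cdots$ arise. These are \emph{not} all monomials, so triangularity alone does not yet give spanning of $R=\QQ[\zz,p_1(\xx),p_3(\xx),\dots]$. What is missing is the statement that the leading monomial of an \emph{arbitrary} element of $R$ also has strictly decreasing $\xx$-exponents; only then can you subtract off a suitable multiple of $\Ff_w$ and induct. This is exactly the nontrivial step in the paper's proof, and it is \emph{not} formal: it uses that every $f\in\QQ[p_1(\xx),p_3(\xx),\dots]$ is supersymmetric, $f(y,-y,x_1,x_2,\dots)=f(x_1,x_2,\dots)$, to rule out $\alpha_1=\alpha_2$ (and then iterates). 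Your truncation suggestion does not sidestep this: the truncated Schubert polynomials do \emph{not} span $\QQ[\zz,x_1,\dots,x_k]$, only the image of $R$ under $x_{k+1}=x_{k+2}=\cdots=0$, and characterizing that image is precisely the same problem again. So the supersymmetry argument (or an equivalent characterization of the ring generated by odd power sums) is the missing ingredient you need to supply.
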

\begin{proof}
As before, we consider the order $\ldots>z_2>z_1>x_1>x_2>\ldots$ on the set of variables and the corresponding lexicographic order $\prec$ on the set of monomials. The highest (with respect to this order) monomial $\mathrm{in}_\prec(\Ff_w)$ in a Schubert polynomial $\Ff_w$, where $w\in\Fc_n\subset \Fc_\infty$, corresponds to the bottom pipe dream $D_b\in \PD_{\Fc_n}(w)$.
	
From the description of the bottom pipe dreams we see that:
	\begin{itemize}
		\item for different $w\in\Fc_\infty$ the highest terms of the Schubert polynomials $\Ff_w$ are distinct, hence the Schubert polynomials of each given type ($B$, $C$ or $D$) are linearly independent;
		\item for Schubert polynomials of a given type ($B$, $C$ or $D$) and each $\beta=(\beta_1,\beta_2,\ldots,0,0,\ldots)$ and $\alpha=(\alpha_1>\alpha_2>\ldots)$ there exists an element $w\in\Fc_\infty$ such that the highest term of $\Ff_w$ equals $\xx^\alpha\zz^\beta$.
	\end{itemize}

Now let us show that the highest term of each monomial $f\in\QQ[\zz,p_1(\xx),p_3(\xx),\ldots]$ satisfies the following property: the powers of $x_i$ occuring in it strictly decrease.

Let $f\in\QQ[p_1(\xx),p_3(\xx),\ldots]$, and let the highest term of $f$ be equal to $x_1^{\alpha_1}x_2^{\alpha_2}\ldots x_s^{\alpha s}$. Since  $f$ is a symmetric function, we have $\alpha_1\geq \alpha_2\geq\ldots\geq\alpha_s$. Suppose that $\alpha_1=\alpha_2=\alpha$.

Let $x_1^{\beta_1}x_2^{\beta_2}x_3^{\beta_3}\ldots$ be another monomial occuring in $f$. Then $f$ also contains the monomial $x_1^{\beta_2}x_2^{\beta_1}x_3^{\beta_3}\ldots$, so $\beta_1\leq\alpha,\beta_2\leq \alpha$ and we have either $\beta_1=\beta_2=\alpha$, or $\beta_1+\beta_2<\alpha_1+\alpha_2=2\alpha$. This means that
\[
f(x_1,x_2,x_3,\ldots)=x_1^\alpha x_2^\alpha g(x_3,x_4,\ldots)+h(x_1,x_2,x_3\ldots).
\]
Here  $g$  is a nonzero symmetric function, and  $h$ is a symmetric function with the following property:  for each of its monomials the sums of powers of $x_1$ and $x_2$ is less than $2\alpha$. The function $f$ is \emph{supersymmetric}:
\[
f(y,-y,x_1,x_2,\ldots)=f(x_1,x_2,\ldots),
\]
since every odd power sum $p_{2k-1}(\xx)$ satisfies this property. On the other hand, 
$$
f(y,-y,x_1,x_2,\ldots)=\pm y^{2\alpha}g(x_1,x_2,x_3,\ldots)+h(y,-y,x_1,\ldots),
$$
where the power of $h$ with respect to $y$ does not exceed $2\alpha-1$. So  $f(y,-y,x_1,x_2,\ldots)$ depends on $y$ and hence cannot be equal to $f(x_1,x_2,\ldots)$. This contradiction shows that $\alpha_1>\alpha_2$.

Now let us express $f(\xx)$ as
\[
f(\xx)=x^{\alpha_1}g(x_2,x_3,\ldots)+h(x_1,x_2,\ldots),
\]
where $g\in\QQ[p_1(x_2,x_3,\ldots),p_3(x_2,x_3,\ldots),\ldots]$, and the power of $h$ with respect to $x_1$ does not exceed $\alpha_1-1$. The highest term of  $g(x_2,x_3,\ldots)$ is equal to $x_2^{\alpha_2}x_3^{\alpha_3}\ldots x_s^{\alpha s}$. A similar reasoning applied to $g$ shows that $\alpha_2>\alpha_3$. Proceeding in this way further, we obtain the desired inequalities $\alpha_1>\alpha_2>\ldots>\alpha_s$. 
\end{proof}

\begin{remark} Note that the Schubert polynomials, viewed as elements of the ring $\QQ[\zz, p_1(\xx),p_3(\xx),\ldots]$, do not necessarily have integer coefficients. For instance, if $w=s_0s_1s_0$, the corresponding Schubert polynomial of type $B$ equals
\[
\Bf_{s_0s_1s_0}=\sum_{i<j}(x_i^2x_j+x_ix_j^2)+2\sum_{i<j<k}x_ix_jx_k=\frac 1 3 p_1^3(\xx)-\frac 1 3 p_3(\xx).
\]
More examples can be found in Appendix~\ref{appendix}.

However, if we consider them as elements of the ring $\ZZ[\zz]\otimes \Omega[\xx]$, where $\Omega[\xx]$ is the ring of supersymmetric functions in $\xx$, then the Schubert polynomials of types $B$ and $D$ form bases of this ring. The argument for proving this is similar to the one in type $A$: every monomial $\xx^\alpha\zz^\beta$ with $\alpha_1>\alpha_2>\dots$ occurs as the highest term in some Schubert polynomial with coefficient 1.
\end{remark}

\section{Schubert polynomials of Grassmannian permutations}\label{sec:grassmann}

\subsection{Schubert polynomials of Grassmannian permutations are Schur polynomials}\label{ssec:grassmannschur}

It  is well known that the Schur polynomials appear as the Schubert polynomials of permutations with a unique descent (such permutations are called Grassmannian ones). This fact can be proved combinatorially (cf., for instance, the notes~\cite{Knutson12} by A.\,Knutson). The Schur polynomial $s_\lambda(z_1,\dots,z_n)$, where $\lambda$ is a partition, can be obtained as the sum of monomials $z^T$ over Young tableaux $T$ of shape $\lambda$ filled by the integers not exceeding  $n$; one can construct a bijection between the Young tableaux of a given shape and the pipe dreams of the Grassmannian permutation corresponding to $\lambda$. For the reader's convenience, we recall this result as Theorem~\ref{thm:a_grass} and provide its proof.

It is well-known that this result can be generalized to the case of other classical groups. In this case the ordinary Schur polynomials are replaced by the $P$- and $Q$-Schur polynomials: $P$ for the types $B$ and $D$, while $Q$ corresponds to the type $C$. This observation is essentialy due to P.\,Pragacz~\cite{Pragacz91}; in~\cite[Theorem 3]{BilleyHaiman95}, this fact was shown using a modification of the Edelmann--Greene correspondence. The main goal of this section is to give a proof of this fact without referring to the Edelmann--Greene correspondence; instead, we mimic the type $A$ proof and construct the bijection between the pipe dreams of a Grassmannian permutation and circled shifted Young tableaux, which index monomials in $P$- and $Q$-Schur functions.

\begin{definition} Let $2\leq k\leq n-1$. A permutation $w\in\Sc_n$ is said to be \emph{$k$-Grassmannian},  if $w(1)<w(2)<\ldots<w(k)>w(k+1)<w(k+2)<\ldots<w(n)$. A permutation is said to be Grassmannian, if it is $k$-Grassmannian for some $k$. In other words, a Grassmannian permutation has a unique descent at $k$.
\end{definition}
	
A $k$-Grassmannian permutation $w\in\Sc_n$ bijectively corresponds to the partition $\lambda(w)=(\lambda_1,\dots,\lambda_k)$ of the number $\ell(w)$ into at most $k$ parts: it is given by $\lambda_i=w(i)-i$.

\begin{definition} Let $\Fc_n=\BCc_n$ or $\Dc_n$. A permutation $w\in\Fc_n$ is said to be  \emph{Grassmannian} if $w(1)<w(2)<\ldots<w(n)$.
\end{definition}

Let $k$ be a number such that $w(k)<0<w(k+1)$.  A permutation thus defines a \emph{strict} partition, i.e. a partition of strictly decreasing integers, $-w(1)>-w(2)>\dots>-w(k)>0$. This provides a bijection between strict partitions and Grassmannian permutations in $\BCc_\infty$.

In the one-line notation, saying that a permutation $w\in\BCc_\infty$ is Grassmannian is equivalent to saying that $w=\overline\imath_1\dots \overline\imath_k i_{k+1}\dots i_n$, with all $i_j>0$ distinct, $i_1>i_2>\dots>i_k$, and $i_{k+1}<i_{k+2}<\dots<i_n$: all the entries with bars precede those without bars, the absolute values of the barred entries decrease, and the values of the entries without bars increase. The corresponding strict partition is $(i_1,i_2,\ldots,i_k)$.

In this section we prove that the Schubert polynomials of Grassmannian permutations for $\Fc_n=\BCc_n$ and $\Dc_n$ are equal to  $P$- (in types $B$ and $D$) and $Q$-Schur functions (in type $C$), respectively, by constructing an explicit bijection between the pipe dreams and the circled Young tableaux.

We start with recalling the combinatorial description of (usual) Schur polynomials.

\begin{definition}
Let $\lambda=(\lambda_1\geq\lambda_2\geq\ldots\geq\lambda_l>0)$ be a partition  and $k\geq 0$. Consider the \emph{Young diagram} (in the English notation) with rows of length $\lambda_1,\dots,\lambda_l$. A \emph{(semistandard) Young tableau of shape $\lambda$} is a labelling of its boxes by the numbers $1,\dots,k$ weakly increasing along the rows and strictly increasing along the columns. The set of all semistandard Young tableaux of shape $\lambda$ will be denoted by $\SSYT_k(\lambda)$.
	
If $T\in\SSYT_k(\lambda)$, denote by $\zz^T$ the monomial obtained as the product of  $z_i$ over all occurencies of $i$ in the tableau $T$. A \emph{Schur polynomial} is defined as the sum of monomials $\zz^T$ over all semistandard Young tableaux of a given shape:
\[
s_{\lambda}(z_1, \ldots, z_k) = \sum\limits_{T \in \SSYT_k(\lambda)}\zz^T.
\]
\end{definition}

It is well-known (cf., for example,~\cite{Fulton97}) that $s_\lambda$ is a symmetric polynomial in $z_1,\dots,z_k$. 

The following theorem is a fundamental property of Schubert polynomials (see, for instance,~\cite[{(4.8)}]{Macdonald91}). Its bijective proof is probably folklore; it can be found, for example, in~\cite{Knutson12}. For the reader's convenience, we give this proof here. It will be generalized for the cases $B$, $C$, and $D$ in the next subsection.

\begin{theorem}\label{thm:a_grass}
	Let $w\in\Sc_n$ be a $k$-Grassmannian permutation. Let
\[
\lambda(w)=(\lambda_1(w),\ldots,\lambda_l(w))=(w(k)-k, \ldots, w(2)-2, w(1)-1).
\]
Then we have
\[
\Sf_w(\zz)=s_{\lambda(w)}(z_1,z_2,\dots,z_k).
\]
\end{theorem}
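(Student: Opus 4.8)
The plan is to exhibit an explicit weight-preserving bijection between $\PD_A(w)$ and $\SSYT_k(\lambda(w))$, and then invoke Theorem~\ref{sdreams}. Recall that for a $k$-Grassmannian permutation $w$ the unique descent is at position $k$, so every reduced word for $w$ uses only the letters $s_1,\ldots,s_{n-1}$; more importantly, the shape of a pipe dream $D\in\PD_A(w)$ is determined by which diagonals its crosses lie in, and the Grassmannian condition forces a very rigid structure: reading the crosses in row $i$ from left to right, they occupy a consecutive run of diagonals, and the diagonals used in row $i+1$ are a ``shifted'' interval relative to those in row $i$. First I would make this precise by recording the combinatorial characterization of $\PD_A(w)$ for $w$ Grassmannian: a pipe dream $D$ of shape $w$ has, in row $i$ (for $i=1,\ldots,k$), exactly $\lambda_{k+1-i}(w)$ crosses, and these crosses sit in consecutive diagonals $c_i, c_i+1,\ldots,c_i+\lambda_{k+1-i}-1$ for some starting diagonal $c_i\geq i$; reducedness is equivalent to the inequalities $c_1 < c_2 < \cdots$ (strict) together with the constraint that row $i$'s crosses do not ``overtake'' row $i+1$'s — i.e.\ $c_{i+1} \le c_i + 1$ is forced on the left end while the right ends satisfy $c_i + \lambda_{k+1-i} - 1 \ge c_{i+1}-1$. (I would draw this out carefully; the ladder-move description from Section~\ref{ssec:bottoma} is a convenient bookkeeping device here.)

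Next I would define the bijection. Given $D\in\PD_A(w)$, read row $i$ of $D$ (counting from the bottom, so $i$ runs from $k$ down to $1$, matching $\lambda_1,\lambda_2,\ldots$): the crosses in this row sit in diagonals $d$, and to such a cross in diagonal $d$ lying in row $i$ I assign the entry $i$ placed in the box of the Young diagram of $\lambda(w)$ in the appropriate row, the column index being determined by the position of the cross within its run. Concretely, the $j$-th cross (from the left) in staircase-row $i$ becomes the entry $i$ in column $j$ of row $k+1-i$ of the tableau. The content of the claim is that this produces a filling $T$ of $\lambda(w)$ that is semistandard, and conversely every $T\in\SSYT_k(\lambda(w))$ arises from a unique $D$. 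The row-weak-increase of $T$ is automatic since all entries in a given row are equal (all crosses in a staircase-row carry the same label $i$)—wait, that's not quite it; rather, the correct assignment is the standard one where the $j$-th cross from the left in the run occupying diagonals $c_i,\ldots$ gets label equal to its \emph{row number} $i$, and semistandardness along columns of $T$ translates into the strict nesting of the cross-runs. I would verify: (i) column-strictness of $T$ $\Leftrightarrow$ the starting diagonals strictly increase as one moves up the staircase, which is exactly reducedness; (ii) the shape of $T$ is $\lambda(w)$ $\Leftrightarrow$ row $i$ has $\lambda_{k+1-i}$ crosses, which is forced by $w(i)-i = \lambda_{k+1-i}$ together with the pipe-dream-to-permutation rule; (iii) entries of $T$ are $\le k$ $\Leftrightarrow$ only the first $k$ rows of the staircase contain crosses, again forced because $w$ fixes $i$ for $i>k$ in the relevant sense. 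Finally, the monomial match: $\zz^{\beta(D)} = \prod_i z_i^{(\#\text{crosses in row }i)} = \prod_i z_i^{\lambda_{k+1-i}} = \zz^T$ by construction. Combining the bijection with Theorem~\ref{sdreams} gives
\[
\Sf_w(\zz) = \sum_{D\in\PD_A(w)} \zz^{\beta(D)} = \sum_{T\in\SSYT_k(\lambda(w))} \zz^T = s_{\lambda(w)}(z_1,\ldots,z_k).
\]

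The main obstacle I anticipate is item (iii) and, more generally, pinning down exactly which pipe dreams have shape $w$ — that is, proving the rigid ``nested consecutive runs'' structure rather than just asserting it. One clean way to do this without case analysis is to use the bottom pipe dream of Section~\ref{ssec:bottoma}: the bottom pipe dream $D_b$ of a $k$-Grassmannian $w$ has $L_i(w) = \lambda_{k+1-i}(w)$ left-adjusted crosses in row $i$ for $i\le k$ and nothing below, and then every pipe dream in $\PD_A(w)$ is obtained from $D_b$ by ladder moves, which visibly preserve the ``each row is a consecutive run, runs strictly nested'' property and act on the tableau side exactly as the elementary moves generating all of $\SSYT_k(\lambda(w))$ from the column-superstandard tableau. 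So the cleanest route is: (1) identify $D_b$ with the superstandard tableau; (2) show ladder moves on pipe dreams correspond bijectively to the moves $T \mapsto T'$ that increment a single entry, which generate all semistandard tableaux; (3) conclude the bijection and the monomial identity. I would present the direct bijection as the main argument and mention the ladder-move perspective as the mechanism guaranteeing it is well-defined and surjective.
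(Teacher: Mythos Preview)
Your central structural claim---that in each row of a reduced pipe dream for a Grassmannian $w$ the crosses occupy a consecutive run of diagonals, and that row~$i$ always contains exactly $\lambda_{k+1-i}$ crosses---is false. Take the paper's own example $w=135624\in\Sc_6$, which is $4$-Grassmannian with $\lambda(w)=(2,2,1)$. One of its pipe dreams has crosses at positions $(1,2),(1,4),(2,2),(3,3),(4,1)$: the two crosses in row~$1$ lie on diagonals~$2$ and~$4$, not a consecutive run, and the row-count vector is $(2,1,1,1)$, not $(\lambda_4,\lambda_3,\lambda_2,\lambda_1)=(0,1,2,2)$. Your monomial identity $\zz^{\beta(D)}=\prod_i z_i^{\lambda_{k+1-i}}$ would then make $\Sf_w$ a single monomial, which it is not. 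Your own hesitation (``wait, that's not quite it'') was on target: assigning to every cross in staircase-row $i$ the entry $i$ and the box ``column $j$ of tableau-row $k+1-i$'' produces a filling whose rows are constant, so distinct pipe dreams could not yield distinct tableaux by this rule.

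The fix is to split ``box'' and ``entry'' the other way around, and this is exactly your backup route (and the paper's argument): the tableau \emph{box} assigned to a cross is its landing position after pushing $D$ down to $D_b$ by ladder moves, while the tableau \emph{entry} records the row the cross occupied in the original $D$ (the paper uses $k-i+1$, so the weight match reads $\zz^{\beta(D)}=\widetilde\zz^{\,T}$ with $\widetilde\zz=(z_k,\dots,z_1)$, and one finishes by symmetry of $s_\lambda$). Grassmannian permutations are $321$-avoiding, hence fully commutative, so the reduction $D\rightsquigarrow D_b$ uses only elementary chute moves (each cross slides one step south-west along its diagonal); this is what guarantees the resulting filling is semistandard, since the order of crosses along each strand is preserved. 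So promote your ladder-move paragraph to the main argument and drop the ``consecutive runs'' description: establish that the reduction gives a well-defined map from crosses of $D$ to boxes of $D_b$, that labelling those boxes by original row numbers is semistandard, and that the inverse (lifting crosses of $D_b$ to the rows prescribed by the tableau entries, processed right-to-left and top-to-bottom) is well defined.
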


\begin{proof}
We first observe that  $L_i(w)=w(i)-i$ for  $i\leq k$ and $L_i(w)=0$ for $i>k$. This means that for the bottom pipe dream $D_b\in\PD_{A}(w)$ its crosses are located in the boxes forming the Young diagram $\lambda$ flipped upside down:
		\begin{figure}[ht]
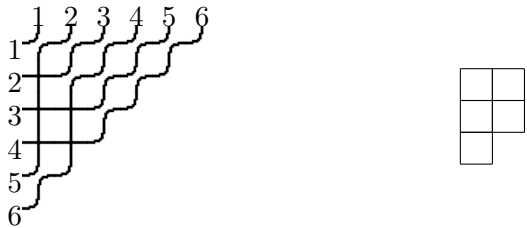

		\centering
		\begin{subfigure}{.3\textwidth}
			\centering
			$$	\begingroup
			\setlength\arraycolsep{0pt}
			\renewcommand{\arraystretch}{0.07}
			\begin{matrix}
			& 1 & 2 & 3 & 4 & 5 & 6  \\
			1 & \elbows & \elbows & \elbows & \elbows & \elbows & \elbow & \\
			2 & \cross & \elbows & \elbows & \elbows & \elbow & \\
			3 & \cross & \cross & \elbows & \elbow & \\
			4 & \cross & \cross & \elbow & \\
			5 & \elbows & \elbow & \\
			6 & \elbow & \\
			\end{matrix}
			\quad
			\endgroup
			$$
		\end{subfigure}%
		\begin{subfigure}{.3\textwidth}
			\centering
			\begin{displaymath}
			\tableau{ \ & \  \\
				\ & \  \\
				\  } 
			\end{displaymath} 			
		\end{subfigure}
		\caption{The bottom pipe dream for  $w=135624 $ and the Young diagram $\lambda(w)$}
	\end{figure}

Now let  $D\in\PD_{A}(w)$ be an arbitrary pipe dream of shape $w$. Let us assign to $D$ a Young  tableau $T\in\SSYT_k(\lambda(w))$ as follows. First let us bring it to $D_b$ by moving some crosses down. Now we replace each cross in $D_b$ by the number $k-i+1$, where $i$ is the number of the row which initially contained this cross.

One can check that all the crosses were only shifted southwest:
\[
\begingroup
\setlength\arraycolsep{0pt}
\renewcommand{\arraystretch}{0.07}
\begin{matrix}
\elbows&\cross\\
\elbows&\elbows
\end{matrix}\mapsto
\begin{matrix}
\elbows&\elbows\\
\cross&\elbows
\end{matrix}
\endgroup
\]
This means that the order of crosses along each strand remains unchanged. Hence the numbers along the rows of $T$ weakly increase, and the numbers along the columns strictly increase.
	\begin{figure}[ht!]
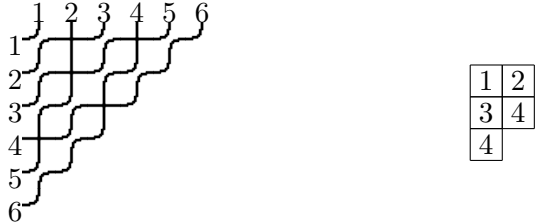

	\centering
	\begin{subfigure}{0.3\textwidth}
		\centering
		$$	\begingroup
		\setlength\arraycolsep{0pt}
		\renewcommand{\arraystretch}{0.07}
		\begin{matrix}
		& 1 & 2 & 3 & 4 & 5 & 6  \\
		1 & \elbows & \cross & \elbows & \cross & \elbows & \elbow & \\
		2 & \elbows & \cross & \elbows & \elbows & \elbow & \\
		3 & \elbows & \elbows & \cross & \elbow & \\
		4 & \cross & \elbows & \elbow & \\
		5 & \elbows & \elbow & \\
		6 & \elbow & \\
		\end{matrix}
		\quad
		\endgroup
		$$
	\end{subfigure}
	\begin{subfigure}{.3\textwidth}
		\centering
		\begin{displaymath}
		\tableau{ 1 & 2  \\
			3 & 4  \\
			4  } 
		\end{displaymath} 		
	\end{subfigure}
	\caption{Pipe dream and the corresponding Young tableau}
\end{figure}

This correspondence between $\PD_{A}(w)$ and $\SSYT_k(\lambda(w))$ is a bijection. Indeed, let us start with a tableau $T$; we will read the crosses in $D_b$ from right to left, from top to bottom, and lift each of the crosses into the row prescribed by the value inside the corresponding box. Since the numbers in $T$ weakly increase along the rows and strictly increase along the columns, each of the crosses will not ``bump'' into the crosses lifted before, so all of them will be lifted to the prescribed positions.

Let $\widetilde\zz=(z_k,z_{k-1},\ldots,z_1)$. If  $T\in\SSYT_k(\lambda(w))$ is the tableau corresponding to  $D\in\PD_{A}(w)$, the monomial $\zz^{\beta(D)}$ is equal to $\widetilde\zz^T$.

Hence
\[
s_\lambda(\widetilde\zz)=\sum_{T\in\SSYT(\lambda(w))}\widetilde\zz^T=\sum_{D \in \PD_{A}(w)}\zz^{\beta(D)}=\Sf_w(\zz).
\]

Since the Schur polynomials are symmetric, we have $s_\lambda(\zz)=\Sf_w(z_1,\dots,z_k)$.
\end{proof}

\subsection{Schubert polynomials of types $B$, $C$ and $D$ are $P$- and $Q$-Schur functions}\label{ssec:grassmannpq}

Let $\mu=(\mu_1,\dots,\mu_\ell)$, where $\mu_1>\mu_2>\ldots>\mu_\ell$ be a partition into distinct parts. Consider a \emph{shifted Young diagram}, as shown on the figure below: its $i$-th row consists of $\mu_i$ boxes, and the rows are aligned along the NW-SE diagonal. The partition $\mu=(\mu_1,\mu_2,\ldots,\mu_\ell)$ is called the \emph{shape} of this diagram.
	\begin{figure}[ht!]
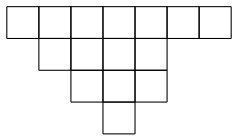

		$$
		\tableau{\ & \ & \ & \ & \ & \ & \ \\
		& \ & \ & \ & \ &&\\
		&& \ & \ & \ & &\\
		&&& \ &&&} 
		$$
		\caption{Shifted Young diagram $\mu=(7,4,3,1)$}
	\end{figure}

Let us fill the boxes of a shifted Young diagram by the numbers $1^\circ<1<2^\circ<2<\ldots$ according to the following rules:
\begin{itemize}
	\item the numbers weakly increase along the rows and the columns;
	\item a row cannot contain two equal numbers with circles;
	\item a column cannot contain two equal numbers without circles.
\end{itemize}
Such an object is called a \emph{circled Young tableau} of shape $\mu$. The set of all such tableaux of shape $\mu$ will be denoted by $\CYT(\mu)$. 

Let $\CYT'(\mu)\subset\CYT(\mu)$ be the subset of circled Young tableaux such that the leftmost number of each row has no circle.

To each tableau $T\in\CYT(\mu)$ we can assign a monomial $\xx^T$ obtained as the product of $x_i$ over all the occurencies of $i^\circ$ and $i$ in $T$.

\begin{definition} The \emph{$P$- and $Q$-Schur functions} are defined as the sums of monomials over all circled Young tableaux:
\begin{eqnarray*}
Q_\mu(\xx)=\sum_{T\in\CYT(\mu)}\xx^T;\\
P_\mu(\xx)=\sum_{T\in\CYT'(\mu)}\xx^T.
\end{eqnarray*}
\end{definition}

\begin{remark}
Let $T\in\CYT(\mu)$ be a circled Young tableau. Note that we can add or remove a circle at the number of the leftmost box of each row, still obtaining a valid circled tableau. This means that
\[
P_\mu(\xx)=2^{-\ell(\mu)}Q_\mu(\xx),
\]
where  $\ell(\mu)$ is the number of parts in the partition  $\mu$.
\end{remark}

The following theorem was proved in~\cite[Prop.~3.13, 3.14]{BilleyHaiman95} using a modification of the Edelmann--Greene correspondence. Here we prove it with a different (and easier) bijective argument.

\begin{theorem}\label{PQschub}
Let $w=\overline{w}_1\overline{w}_2\ldots\overline{w}_{s(w)} w_{s(w)+1}\ldots w_{n}\in \BCc_n$ be a Grassmannian permutation with the decreasing segment $w_1,\ldots,w_{s(w)}$ and  the increasing segment $w_{s(w)+1},\ldots,w_n$. Let $\mu=(w_1,\ldots,w_{s(w)})$. Then $\Bf_w=P_{\mu}$ and $\Cf_w=Q_{\mu}$.
\end{theorem}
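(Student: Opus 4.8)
The plan is to mimic the type-$A$ argument of Theorem~\ref{thm:a_grass}: identify the bottom pipe dream of the Grassmannian permutation $w$, then set up a weight-preserving bijection between $\PD_{\Bc_n}(w)$ (resp. $\PD_{\Cc_n}(w)$) and the circled shifted Young tableaux $\CYT'(\mu)$ (resp. $\CYT(\mu)$). Since $\Bf_w = 2^{-s(w)}\Cf_w$ and $P_\mu = 2^{-\ell(\mu)}Q_\mu$ with $\ell(\mu)=s(w)$, it suffices to prove the statement in one of the two types; I would carry out the $C$-case and deduce the $B$-case. First I would compute the signed Lehmer code of the Grassmannian $w=\overline{w}_1\cdots\overline{w}_{s(w)}w_{s(w)+1}\cdots w_n$: because the barred entries come first in decreasing order of absolute value and the unbarred entries increase, one checks that $L_i(w)=0$ for all $i$ (the one-line sequence $w(1)<w(2)<\cdots<w(n)$ has no inversions among $\{1,\dots,n\}$), while $N(w)=(w_1>w_2>\cdots>w_{s(w)})=\mu$. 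Hence by Theorem~\ref{cbottom} the bottom $c$-signed pipe dream $D_b$ has an empty staircase block, and its $j$-th $\Gamma$-block ($j=1,\dots,s(w)$) carries $\mu_j$ left-adjusted significant elements — an elbow with a faucet in the vertical part and $\mu_j-1$ crosses along the horizontal part. These significant elements, read off the $\Gamma$-blocks, fill exactly a shifted Young diagram of shape $\mu$ (rotated, the rows becoming the horizontal legs of the $\Gamma$-blocks), with the faucet occupying the diagonal box of each row.

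Next I would define the map $\PD_{\Cc_n}(w)\to\CYT(\mu)$. Given $D\in\PD_{\Cc_n}(w)$, use Theorem~\ref{cbottom} to bring $D$ down to $D_b$ by admissible moves; each significant element of $D$ ends up in a definite box of $D_b$, so record it there together with the label of the $\Gamma$-block $i$ it originally came from, writing $i$ for a cross and $i^\circ$ for an elbow-with-a-faucet (equivalently, the diagonal box of a row always gets a circle iff the original element there was a faucet). As in type $A$, each admissible move only pushes a significant element southwest along its strand (down within a $\Gamma$-block or from a higher $\Gamma$-block to a lower one), so the cyclic order of significant elements along each pipe is preserved; this forces the resulting filling to weakly increase along rows and columns. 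The faucet/cross distinction, governed by $x$-admissibility (no three collinear significant elements of the same kind in one $\Gamma$-block, cf.\ the peak condition), translates precisely into the circled-tableau rules: no repeated circled number in a row, no repeated uncircled number in a column. Conversely, starting from $T\in\CYT(\mu)$, read the boxes of $D_b$ right to left, top to bottom, and lift each significant element into the $\Gamma$-block dictated by its label (a circled label forces it to become a faucet, which can only occupy the diagonal box of its $\Gamma$-block — consistent with the circled-tableau condition that a circled entry heads its row); the tableau inequalities guarantee the lifted elements never collide, so this inverts the first map.

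I would then check weight-preservation: if $T$ corresponds to $D$, then $\xx^{\alpha(D)}=\widetilde\xx^{\,T}$ where $\widetilde\xx$ reverses the order of the variables (the $i$-th $\Gamma$-block counted from below carries $x_i$, while labels in $T$ are read with the opposite convention), exactly as in the type-$A$ proof; summing over all pipe dreams and invoking Corollary~\ref{infinite} gives $\Cf_w(\zz,\xx)=\sum_{D}\xx^{\alpha(D)}\zz^{\beta(D)}=\sum_{T\in\CYT(\mu)}\widetilde\xx^{\,T}=Q_\mu(\xx)$ (there is no $\zz$-dependence since the staircase block of every reduced pipe dream of shape $w$ is forced to be empty — any cross there would create an inversion among $\{1,\dots,n\}$, contradicting $w$ Grassmannian). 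Restricting to pipe dreams whose every $\Gamma$-block has a faucet rather than a cross in the diagonal box picks out $\CYT'(\mu)$ and yields $\Bf_w=P_\mu$; alternatively divide by $2^{s(w)}=2^{\ell(\mu)}$.

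The main obstacle I anticipate is the bookkeeping in the bijection: verifying that the admissible moves of Theorem~\ref{cbottom} really act on significant elements as a southwest shift along strands with no reordering, and that the faucet-versus-cross data is preserved and matches the circling rules under both directions of the correspondence. The case $D$, with its four kinds of diagonal elements ($\cross,\crosssign,\elbowssign$, and the alternation rule from Theorem~\ref{dbottom}) and the doubled reduced words $\word^p_x(D)$, is more delicate and would require a separate (analogous but bulkier) treatment of the $1'$-boxes, matching the alternation of $\crosssign$ and $\cross$ to the combinatorics of shifted tableaux for $P_\mu$; however, since the theorem as stated concerns only $\BCc_n$, the $C$- and $B$-arguments above suffice, and I would only remark that the $D$-case is entirely parallel.
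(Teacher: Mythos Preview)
Your overall strategy---identify the bottom pipe dream of the Grassmannian $w$, observe the staircase block is empty, and construct a bijection with shifted tableaux by tracking where each significant element of $D$ lands in $D_b$---is exactly the paper's approach. However, your circling rule is wrong, and this is not a minor bookkeeping issue but a structural error that breaks the bijection.

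You propose to write $i$ for a cross and $i^\circ$ for a faucet. But the number of faucets in any reduced pipe dream of shape $w$ is exactly $s(w)=\ell(\mu)$, independent of $D$; so under your rule every tableau in the image has precisely $\ell(\mu)$ circled entries, whereas tableaux in $\CYT(\mu)$ can have any number of circles. Your map is therefore not surjective. It is also not injective in type $C$: a faucet in the $i$-th $\Gamma$-block can sit in either the vertical or the horizontal $0$-box, and your rule records only ``faucet from block $i$'', collapsing these two distinct pipe dreams to the same tableau. (Relatedly, your parenthetical ``a circled label forces it to become a faucet, which can only occupy the diagonal box'' and ``a circled entry heads its row'' are both false: circled entries in $\CYT(\mu)$ need not be leftmost, and in type $C$ a $\Gamma$-block has two $0$-boxes.)

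The correct rule, as in the paper, is positional: an element originally in the \emph{vertical} part of the $i$-th $\Gamma$-block gets $i^\circ$, one in the \emph{horizontal} part gets $i$. The reason this matches the tableau constraints is that two boxes in the same \emph{row} of the shifted diagram correspond to a strand passing \emph{horizontally} through both elements in $D_b$, and two elements through which a strand passes horizontally cannot both lie in the vertical part of the same $\Gamma$-block---so no repeated circled value in a row; dually for columns. In type $B$ the corner (the unique faucet position) counts as horizontal, so faucets are \emph{uncircled} and the image lands in $\CYT'(\mu)$, giving $\Bf_w=P_\mu$. In type $C$ the two $0$-boxes give the extra factor of $2$ per row, recovering $\CYT(\mu)$ and $\Cf_w=Q_\mu$. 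Once you fix the circling rule this way, the rest of your plan goes through.
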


\begin{proof} Consider the bottom pipe dream $D_b\in\PD_{\Bc_n}(w)$. The positions of its crosses can be deduced from Theorem~\ref{bbottom}: since $-w_1<-w_2<\ldots<-w_{s(w)}<w_{s(w)+1}<\ldots<w_n$, its Lehmer code equals $L(w)=(0,0,\ldots,0)$, hence the staircase block of $D_b$ does not contain any crosses. On the other hand, the  $i$-th $\Gamma$-block for $i\leq s(w)$ contains $w_i$ left-adjusted significant elements.
	
Let us establish a correspondence between the significant elements of $D_b$ and the boxes in the shifted Young diagram in a natural way: the $j$-th significant element (counted from the left) of the $i$-th $\Gamma$-block corresponds to the $j$-th box of the $i$-th row in the shifted Young diagram. If two boxes are located in the same column of the Young diagram, there is a strand passing vertically through both corresponding significant elements. Meanwhile, two boxes in the same row of the Young diagram correspond to a horizontal segment of a strand.

Since each $D\in\PD_{\Bc_n}(w)$ can be reduced to $D_b$ by shifting down significant elements, the staircase block of $D$ also does not contain crosses. We assign to $D$ a circled Young tableau $T\in\CYT'(\mu)$ as follows. For each significant element from  $D_b$ let us look at its initial position in  $D$. If it was in the vertical part (excluding the corner) of the $i$-th $\Gamma$-block, we put $i^\circ$ into the corresponding box of our tableau.  If it was in the horizontal part (including the corner), we put~$i$.

One can show that for Grassmannian permutations the significant elements will only move along the NE-SW diagonal:
	$$\begingroup
	\setlength\arraycolsep{0pt}
	\renewcommand{\arraystretch}{0.07}
	\begin{matrix}
&&\elbowup&\cross\\
&\elbowup&\elbows&\elbow\\
\elbowup&\elbows&\elbow&
	\end{matrix}\mapsto\begin{matrix}
&&\elbowup&\elbows\\
&\elbowup&\elbows&\elbow\\
\elbowup&\cross&\elbow&
	\end{matrix}\endgroup
	$$
		$$\begingroup
	\setlength\arraycolsep{0pt}
	\renewcommand{\arraystretch}{0.07}
	\begin{matrix}
	&&\elbowsign&\elbows\\
	&\elbowup&\elbows&\elbow\\
	\elbowup&\elbows&\elbow&
	\end{matrix}\mapsto\begin{matrix}
	&&\elbowup&\elbows\\
	&\elbowup&\elbows&\elbow\\
	\elbowsign&\elbows&\elbow&
	\end{matrix}\endgroup,
	$$
so the order of the elements corresponding to each strand will remain unchanged. This implies the non-decreasing of the numbers in $T$ along the rows and the columns. Also, if a strand passes through two significant elements horizontally (resp. vertically), these elements cannot be located in the same vertical (resp. horizontal) part of the same $\Gamma$-block. This implies that the circled integers are not repeating along the rows of $T$, and the non-circled ones are not repeating along the columns. 

This correspondence between $\PD_{\Bc_n}(w)$ and $\CYT'(\mu)$ is a bijection. Indeed,  consider a tableau  $T\in\CYT'(\mu)$. Let us read the significant elements of $D_b$ one by one, right to left top to bottom, and lift each of them into the vertical or horizontal part of the $\Gamma$-block prescribed by the entry in the corresponding box. It is easy to see that the elements will not ``bump'' into each other, so all of them will be lifted to the prescribed positions.

To conclude, note that if a $b$-signed pipe dream $D$ corresponds to the tableau $T$, then $\xx^T=\xx^{\alpha(D)}=\xx^{\alpha(D)}\zz^{\beta(D)}$. This means that
\[
\Bf_w(\xx,\zz)=\sum_{D\in\PD_{\Bc_n}(w)}\xx^{\alpha(D)}\zz^{\beta(D)}=\sum_{T\in\CYT'(\mu)}\xx^T=P_\mu(\xx).
\]

In the case of $c$-signed pipe dreams a similar reasoning gives us the following relation:
\[
\Cf_w(\xx,\zz)=\sum_{D\in\PD_{\Cc_n}(w)}\xx^{\alpha(D)}\zz^{\beta(D)}=\sum_{T\in\CYT(\mu)}\xx^T=Q_\mu(\xx).
\]
\end{proof}

We conclude by a similar statement in type $D$.

\begin{theorem}\label{thm:dgrassmann}
Let $w=\overline{w}_1\overline{w}_2\ldots\overline{w}_{s(w)} w_{s(w)+1}\ldots w_{n}\in \Dc_n$ be a Grassmannian permutation, with the decreasing segment $w_1,\ldots,w_{s(w)}$ and the increasing segment $w_{s(w)+1},\ldots,w_n$. Let $\mu'=(w_1-1,w_2-1,\ldots,w_{s(w)}-1)$ (if the last entry of the sequence is zero, we omit it). Then $\Df_w=P_{\mu'}$.
\end{theorem}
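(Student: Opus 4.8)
The plan is to mimic, step by step, the bijective proof of Theorem~\ref{PQschub}, replacing $\PD_{\Bc_n}(w)$ by $\PD_{\Dc_n}(w)$ and $\CYT'(\mu)$ by $\CYT'(\mu')$, while carrying the extra bookkeeping special to $d$-signed pipe dreams. First I would describe the bottom pipe dream $D_b\in\PD_{\Dc_n}(w)$ using Theorem~\ref{dbottom}. Since $w$ is Grassmannian, $w(1)<\dots<w(n)$, so $L(w)=(0,0,\dots)$ and the staircase block of $D_b$ is empty; the signed Lehmer code has $N(w)=(w_1,\dots,w_{s(w)})$, so the partition $\mu(w)$ of Theorem~\ref{dbottom} equals $\mu'$, and for $1\le j\le\ell(\mu')$ the $j$-th $\Gamma$-block of $D_b$ is a left-adjusted row of $\mu'_j=w_j-1$ crosses, the corner cross bearing a faucet exactly when $j$ is odd. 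I would note that $w\in\Dc_n$ forces $s(w)$ even, which makes this alternation of ``cross'' and ``cross with a faucet'' in the $1'$-boxes consistent, and that $|\mu'|=\ell(w)$ equals the total weight of significant elements in any reduced $d$-signed pipe dream of shape $w$.

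Next I would build the bijection $\PD_{\Dc_n}(w)\leftrightarrow\CYT'(\mu')$. Given $D\in\PD_{\Dc_n}(w)$, reduce it to $D_b$ by admissible moves (Theorem~\ref{dbottom}); as in types $A$ and $B$, for a Grassmannian $w$ every such move slides significant weight only along a NE--SW diagonal inside a single $\Gamma$-block (an elbow with two faucets being allowed to split off a cross during the reduction), so the staircase of $D$ is empty and the order of the elements along each strand is preserved. Labelling the $j$-th significant slot from the left of the $i$-th $\Gamma$-block of $D_b$ by the $j$-th box of the $i$-th row of the shifted diagram of shape $\mu'$ (and distributing the two slots of an elbow-with-two-faucets over the corresponding two boxes), I would assign to $D$ the filling $T$ that writes $i$ when the corresponding slot of $D$ lies in the horizontal part of the $i$-th $\Gamma$-block (corner included) and $i^{\circ}$ when it lies in the vertical part (corner excluded). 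Then, copying the checks from the proof of Theorem~\ref{PQschub}: diagonal-only motion gives weak increase along rows and columns in the order $1^{\circ}<1<2^{\circ}<2<\cdots$; two horizontally (resp. vertically) consecutive slots of a strand cannot share a horizontal (resp. vertical) part of one block, so circled entries do not repeat in a row and uncircled ones do not repeat in a column; and the alternation among the $1'$-boxes, together with the admissible-move list, keeps the corner slot in the horizontal part, so the leftmost entry of each row is uncircled, i.e.\ $T\in\CYT'(\mu')$. The inverse map lifts the slots of $D_b$, read right-to-left and top-to-bottom, into the parts of the $\Gamma$-blocks dictated by $T$, merging a pair of adjacent corner slots into an elbow-with-two-faucets when $T$ prescribes it; the defining conditions of $\CYT'(\mu')$ prevent collisions. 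Since $\xx^{\alpha(D)}=\xx^{\alpha(D)}\zz^{\beta(D)}=\xx^{T}$ for matched $D$ and $T$, summing over $\PD_{\Dc_n}(w)$ via Corollary~\ref{infinite} yields $\Df_w=\sum_{T\in\CYT'(\mu')}\xx^{T}=P_{\mu'}$.

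I expect the main obstacle to be precisely the treatment of the $1'$-boxes. These may carry four kinds of element (an elbow, a cross, a cross with a faucet, or a weight-two elbow with two faucets), and admissible moves can create or destroy an elbow-with-two-faucets by merging a cross-with-faucet of one $\Gamma$-block with a cross of the adjacent one; I will need to verify carefully that, for a Grassmannian $w$ with its strictly decreasing $\mu'$, this interaction between consecutive blocks produces exactly the ``leftmost entry of each row uncircled'' constraint and nothing more, so that the map above and its inverse are well defined and mutually inverse, and so that the weight-$2$ slot really is split over two boxes in a canonical way. The reason the answer is the $P$-function rather than the $Q$-function is that in type $D$ the number of sign changes is even and the $1'$-corner of a nonempty $\Gamma$-block of $D_b$ always lies in the horizontal part, which pins the leftmost entry of every row; this is also consistent with the fact that for Grassmannian $w\in\Dc_n$ the only decomposition occurring in Theorem~\ref{thm:dschub} is $v=\id$, so that $\Df_w=E_w$ is a single Stanley symmetric function there.
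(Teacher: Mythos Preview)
Your proposal is correct and takes essentially the same approach as the paper, whose entire proof reads ``Similar to the proof of Theorem~\ref{PQschub}.'' You have expanded this sentence into an explicit plan and correctly identified the one genuinely new feature in type~$D$, namely the four possible fillings of the $1'$-box and the splitting/merging of an elbow-with-two-faucets under admissible moves. One small inaccuracy: the diagonal moves are not confined ``inside a single $\Gamma$-block'' but rather slide along a NE--SW diagonal that may pass through several blocks (exactly as in the $B$~case); this does not affect the argument.
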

\begin{proof}
Similar to the proof of Theorem~\ref{PQschub}.
\end{proof}

	\appendix
	
	\section{More examples}\label{appendix}
	
	In the appendix we present an alternative, more compact, notation for pipe dreams. Then we list all the elements, except the identity, for the group $\BCc_2$, as well as some elements of the group $\Dc_3$. For each element we list all the signed pipe dreams of types $B$ and $D$ respectively, with the corresponding monomials. 
	

While working with usual pipe dreams, it is convenient to omit elbows and to draw only crosses. Thus, a pipe dream for a permutation $w\in S_n$ is represented by a staircase Young diagram $(n-1,\dots,2,1)$ with some boxes filled by crosses. 

For the pipe dreams of other classical types, we will do the same: a pipe dream of type $B_n$ will be represented by a sequence of Young diagrams consisting of a staircase $(n-1,\dots,2,1)$ and several hooks $(n,1,\dots,1)$, with some boxes filled by crosses and eventually by the sign $\circ$. The latter sign can only be located in the upper-left corner of a hook; it represents a faucet. Type $C$ is treated similarly.

Likewise, for type $D$  the upper-left corner of each hook can be either empty or contain one of the signs: $+$, $\oplus$, or $\%$, representing a cross, a cross with a faucet, or an elbow with two faucets, respectively.

In the tables below we omit the empty $\Gamma$-blocks. The total weight of all such pipe dreams is a polynomial in $z_i$ times a quasisymmetric function in $x_i$, given in the right column. The first (staircase) Young diagram is shaded gray.
	

\ytableausetup{smalltableaux}

\begin{longtable}{|l|l|l|}
\caption{B-signed pipe dreams for $w\in \BCc_2$}\\
\hline
$w$ & Pipe dreams & Weight\\ 
\hline
$\overline 1 2=s_0$ & \ytableaushort{}*[*(white!25!gray)]{1} \quad \ytableaushort{\circ}*{2,1}& $x_i$\\
\hline
\multicolumn{3}{l}{$\Bf_{s_0}=\sum_{i} x_i=p_1(\xx)$}\\
\hline
$21=s_1$ & \ytableaushort{+}*[*(white!25!gray)]{1} \quad \ytableaushort{\circ}*{2,1}& $z_1$\\ 
& \ytableaushort{}*[*(white!25!gray)]{1} \quad \ytableaushort{\none,+}*{2,1}& $x_i$\\ 
& \ytableaushort{}*[*(white!25!gray)]{1} \quad \ytableaushort{\none +}*{2,1}& $x_i$\\
\hline
\multicolumn{3}{l}{$\Bf_{s_1}=z_1+2\sum_{i} x_i=z_1+2p_1(\xx)$}\\
\hline
$\overline{2}1=s_1s_0$ & \ytableaushort{}*[*(white!25!gray)]{1} \quad \ytableaushort{\circ+}*{2,1}& $x_i^2$\\ 
& \ytableaushort{}*[*(white!25!gray)]{1} \quad \ytableaushort{\circ}*{2,1}
\quad \ytableaushort{\none,+}*{2,1}& $x_ix_j$\\ 
& \ytableaushort{}*[*(white!25!gray)]{1} \quad \ytableaushort{\circ}*{2,1}\quad \ytableaushort{\none+}*{2,1}& $x_ix_j$\\
\hline
\multicolumn{3}{l}{$\Bf_{s_1s_0}=\sum_i x_i^2+2\sum_{i<j} x_ix_j=p_1^2(\xx)$}\\
\hline
$2\overline{1}=s_0s_1$ & \ytableaushort{+}*[*(white!25!gray)]{1} \quad \ytableaushort{\circ}*{2,1}& $z_1x_i$\\ 
& \ytableaushort{}*[*(white!25!gray)]{1} \quad \ytableaushort{\circ,+}*{2,1}
& $x_i^2$\\ 
& \ytableaushort{}*[*(white!25!gray)]{1} \quad \ytableaushort{\none,+}*{2,1}\quad \ytableaushort{\circ}*{2,1}& $x_ix_j$\\
& \ytableaushort{}*[*(white!25!gray)]{1} \quad \ytableaushort{\none +}*{2,1}\quad \ytableaushort{\circ}*{2,1}& $x_ix_j$\\
\hline
\multicolumn{3}{l}{$\Bf_{s_0s_1}=z_1\sum_{i}x_i+\sum_i x_i^2+2\sum_{i<j}x_ix_j=z_1p_1(\xx)+p_1^2(\xx)$}\\
\hline
$\overline{21}=s_0s_1s_0$ & \ytableaushort{}*[*(white!25!gray)]{1} \quad \ytableaushort{\circ+}*{2,1}\quad \ytableaushort{\circ}*{2,1}& $x_i^2x_j$\\
& \ytableaushort{}*[*(white!25!gray)]{1} \quad \ytableaushort{\circ}*{2,1}\quad \ytableaushort{\circ,+}*{2,1}& $x_ix_j^2$\\
& \ytableaushort{}*[*(white!25!gray)]{1} \quad \ytableaushort{\circ}*{2,1}\quad \ytableaushort{\none,+}*{2,1}\quad \ytableaushort{\circ}*{2,1} &$x_ix_jx_k$\\
& \ytableaushort{}*[*(white!25!gray)]{1} \quad \ytableaushort{\circ}*{2,1}\quad \ytableaushort{\none+}*{2,1}\quad \ytableaushort{\circ}*{2,1} &$x_ix_jx_k$\\
\hline
\multicolumn{3}{l}{$\Bf_{s_0s_1s_0}=\sum_{i<j}(x_i^2x_j+x_ix_j^2)+2\sum_{i<j<k}x_ix_jx_k=\frac 1 3 p_1^3(\xx)-\frac 1 3 p_3(\xx)$}\\
\hline
$1\overline{2}=s_1s_0s_1$ & \ytableaushort{+}*[*(white!25!gray)]{1} \quad \ytableaushort{\circ+}*{2,1} & $z_1x_i^2$\\
&\ytableaushort{+}*[*(white!25!gray)]{1} \quad \ytableaushort{\circ}*{2,1}\quad
\ytableaushort{\none,+}*{2,1} & $z_1x_ix_j$\\
&\ytableaushort{+}*[*(white!25!gray)]{1} \quad \ytableaushort{\circ}*{2,1}\quad
\ytableaushort{\none+}*{2,1} & $z_1x_ix_j$\\
& \ytableaushort{}*[*(white!25!gray)]{1} \quad \ytableaushort{\circ+,+}*{2,1} & $x_i^3$\\
&\ytableaushort{}*[*(white!25!gray)]{1} \quad \ytableaushort{\circ,+}*{2,1}\quad
\ytableaushort{\none+}*{2,1} & $x_i^2x_j$\\
&\ytableaushort{}*[*(white!25!gray)]{1} \quad \ytableaushort{\circ,+}*{2,1}\quad
\ytableaushort{\none,+}*{2,1} & $x_i^2x_j$\\
&\ytableaushort{}*[*(white!25!gray)]{1} \quad \ytableaushort{\none,+}*{2,1}\quad
\ytableaushort{\circ+}*{2,1} & $x_ix_j^2$\\
&\ytableaushort{}*[*(white!25!gray)]{1} \quad \ytableaushort{\none,+}*{2,1}\quad
\ytableaushort{\circ}*{2,1}\quad \ytableaushort{\none,+}*{2,1} & $x_ix_jx_k$\\
&\ytableaushort{}*[*(white!25!gray)]{1} \quad \ytableaushort{\none,+}*{2,1}\quad
\ytableaushort{\circ}*{2,1}\quad \ytableaushort{\none+}*{2,1} & $x_ix_jx_k$\\
&\ytableaushort{}*[*(white!25!gray)]{1} \quad \ytableaushort{\none+}*{2,1}\quad
\ytableaushort{\circ+}*{2,1} & $x_ix_j^2$\\
&\ytableaushort{}*[*(white!25!gray)]{1} \quad \ytableaushort{\none+}*{2,1}\quad
\ytableaushort{\circ}*{2,1}\quad \ytableaushort{\none,+}*{2,1} & $x_ix_jx_k$\\
&\ytableaushort{}*[*(white!25!gray)]{1} \quad \ytableaushort{\none+}*{2,1}\quad
\ytableaushort{\circ}*{2,1}\quad \ytableaushort{\none+}*{2,1} & $x_ix_jx_k$\\
\hline
\multicolumn{3}{l}{$\Bf_{s_1s_0s_1}=z_1\sum_i x_i^2 + 2z_1\sum_{i<j}x_ix_j+2\sum_{i<j}(x_i^2x_j+x_ix_j^2)+$}\\
\multicolumn{3}{r}{$+4\sum_{i<j<k}x_ix_jx_k=z_1p_1^2(\xx)+\frac 2 3 p_1^3(\xx)+\frac 1 3 p_3(\xx)$}\\
\hline
$\overline{12}=s_0s_1s_0s_1$ &\ytableaushort{}*[*(white!25!gray)]{1} \quad \ytableaushort{\circ+}*{2,1}\quad
\ytableaushort{\circ+}*{2,1} & $x_i^2x_j^2$\\
$=s_1s_0s_1s_0$ &\ytableaushort{}*[*(white!25!gray)]{1} \quad \ytableaushort{\circ+}*{2,1}\quad
\ytableaushort{\circ}*{2,1} \quad \ytableaushort{\none,+}*{2,1} &$x_i^2x_jx_k$\\
&\ytableaushort{}*[*(white!25!gray)]{1} \quad \ytableaushort{\circ+}*{2,1}\quad
\ytableaushort{\circ}*{2,1} \quad \ytableaushort{\none+}*{2,1} &$x_i^2x_jx_k$\\
&\ytableaushort{}*[*(white!25!gray)]{1} \quad \ytableaushort{\circ}*{2,1}\quad
\ytableaushort{\circ+,+}*{2,1}  &$x_i^2x_j^3$\\
&\ytableaushort{}*[*(white!25!gray)]{1} \quad \ytableaushort{\circ}*{2,1}\quad
\ytableaushort{\circ,+}*{2,1} \quad \ytableaushort{\none,+}*{2,1} &$x_ix_j^2x_k$\\
&\ytableaushort{}*[*(white!25!gray)]{1} \quad \ytableaushort{\circ}*{2,1}\quad
\ytableaushort{\circ,+}*{2,1} \quad \ytableaushort{\none+}*{2,1} &$x_ix_j^2x_k$\\
&\ytableaushort{}*[*(white!25!gray)]{1} \quad \ytableaushort{\circ}*{2,1}\quad
\ytableaushort{\none,+}*{2,1} \quad \ytableaushort{\circ+}*{2,1} &$x_ix_jx_k^2$\\
&\ytableaushort{}*[*(white!25!gray)]{1} \quad \ytableaushort{\circ}*{2,1}\quad
\ytableaushort{\none,+}*{2,1} \quad \ytableaushort{\circ}*{2,1}\quad \ytableaushort{\none,+}*{2,1} &$x_ix_jx_kx_m$\\
&\ytableaushort{}*[*(white!25!gray)]{1} \quad \ytableaushort{\circ}*{2,1}\quad
\ytableaushort{\none,+}*{2,1} \quad \ytableaushort{\circ}*{2,1}\quad \ytableaushort{\none+}*{2,1} &$x_ix_jx_kx_m$\\
&\ytableaushort{}*[*(white!25!gray)]{1} \quad \ytableaushort{\circ}*{2,1}\quad
\ytableaushort{\none+}*{2,1} \quad \ytableaushort{\circ+}*{2,1} &$x_ix_jx_k^2$\\
&\ytableaushort{}*[*(white!25!gray)]{1} \quad \ytableaushort{\circ}*{2,1}\quad
\ytableaushort{\none+}*{2,1} \quad \ytableaushort{\circ}*{2,1}\quad \ytableaushort{\none,+}*{2,1} &$x_ix_jx_kx_m$\\
&\ytableaushort{}*[*(white!25!gray)]{1} \quad \ytableaushort{\circ}*{2,1}\quad
\ytableaushort{\none+}*{2,1} \quad \ytableaushort{\circ}*{2,1}\quad \ytableaushort{\none+}*{2,1} &$x_ix_jx_kx_m$\\
 &\ytableaushort{+}*[*(white!25!gray)]{1} \quad \ytableaushort{\circ+}*{2,1}\quad
\ytableaushort{\circ}*{2,1} & $z_1x_i^2x_j$\\
 &\ytableaushort{+}*[*(white!25!gray)]{1} \quad \ytableaushort{\circ}*{2,1}\quad
\ytableaushort{\circ,+}*{2,1} & $z_1x_ix_j^2$\\
&\ytableaushort{+}*[*(white!25!gray)]{1} \quad \ytableaushort{\circ}*{2,1}\quad
\ytableaushort{\none,+}*{2,1} \quad \ytableaushort{\circ}*{2,1} &$z_1x_ix_jx_k$\\
&\ytableaushort{+}*[*(white!25!gray)]{1} \quad \ytableaushort{\circ}*{2,1}\quad
\ytableaushort{\none+}*{2,1} \quad \ytableaushort{\circ}*{2,1} &$z_1x_ix_jx_k$\\
&\ytableaushort{}*[*(white!25!gray)]{1} \quad \ytableaushort{\circ+,+}*{2,1}\quad
\ytableaushort{\circ}*{2,1} & $x_i^3x_j$\\
&\ytableaushort{}*[*(white!25!gray)]{1} \quad \ytableaushort{\circ,+}*{2,1}\quad
\ytableaushort{\circ,+}*{2,1} & $x_i^2x_j^2$\\
&\ytableaushort{}*[*(white!25!gray)]{1} \quad \ytableaushort{\circ,+}*{2,1}\quad
\ytableaushort{\none,+}*{2,1} \quad \ytableaushort{\circ}*{2,1} &$x_i^2x_jx_k$\\
&\ytableaushort{}*[*(white!25!gray)]{1} \quad \ytableaushort{\circ,+}*{2,1}\quad
\ytableaushort{\none+}*{2,1} \quad \ytableaushort{\circ}*{2,1} &$x_i^2x_jx_k$\\
&\ytableaushort{}*[*(white!25!gray)]{1} \quad \ytableaushort{\none,+}*{2,1}\quad
\ytableaushort{\circ+}*{2,1} \quad \ytableaushort{\circ}*{2,1} &$x_ix_j^2x_k$\\
&\ytableaushort{}*[*(white!25!gray)]{1} \quad \ytableaushort{\none,+}*{2,1}\quad
\ytableaushort{\circ}*{2,1} \quad \ytableaushort{\circ,+}*{2,1} &$x_ix_jx_k^2$\\
&\ytableaushort{}*[*(white!25!gray)]{1} \quad \ytableaushort{\none,+}*{2,1}\quad
\ytableaushort{\circ}*{2,1} \quad \ytableaushort{\none,+}*{2,1}\quad \ytableaushort{\circ}*{2,1} &$x_ix_jx_kx_m$\\
&\ytableaushort{}*[*(white!25!gray)]{1} \quad \ytableaushort{\none,+}*{2,1}\quad
\ytableaushort{\circ}*{2,1} \quad \ytableaushort{\none+}*{2,1}\quad \ytableaushort{\circ}*{2,1} &$x_ix_jx_kx_m$\\
&\ytableaushort{}*[*(white!25!gray)]{1} \quad \ytableaushort{\none+}*{2,1}\quad
\ytableaushort{\circ+}*{2,1} \quad \ytableaushort{\circ}*{2,1} &$x_ix_j^2x_k$\\
&\ytableaushort{}*[*(white!25!gray)]{1} \quad \ytableaushort{\none+}*{2,1}\quad
\ytableaushort{\circ}*{2,1} \quad \ytableaushort{\circ,+}*{2,1} &$x_ix_jx_k^2$\\
&\ytableaushort{}*[*(white!25!gray)]{1} \quad \ytableaushort{\none+}*{2,1}\quad
\ytableaushort{\circ}*{2,1} \quad \ytableaushort{\none,+}*{2,1}\quad \ytableaushort{\circ}*{2,1} &$x_ix_jx_kx_m$\\
&\ytableaushort{}*[*(white!25!gray)]{1} \quad \ytableaushort{\none+}*{2,1}\quad
\ytableaushort{\circ}*{2,1} \quad \ytableaushort{\none+}*{2,1}\quad \ytableaushort{\circ}*{2,1} &$x_ix_jx_kx_m$\\
\hline
\multicolumn{3}{l}{$\Bf_{s_1s_0s_1s_0}=z_1\sum_{i<j}(x_i^2x_j+x_ix_j^2)+2z_1\sum_{i<j<k}x_ix_jx_k+$}\\
\multicolumn{3}{c}{$+\sum_{i<j}(2x_i^2x_j^2+x_ix_j^3+x_i^3x_j)+4\sum_{i<j<k}(x_i^2x_jx_k+x_ix_j^2x_k+x_ix_jx_k^2)+$}\\
\multicolumn{3}{r}{$+8\sum_{i<j<k<m}x_ix_jx_kx_m=\frac 1 3 z_1p_1^3(\xx)-\frac 1 3 z_1p_3(\xx)+\frac 1 3 p_1^4(\xx)-\frac 1 3 p_1(\xx)p_3(\xx)
$}\\
\end{longtable}

\begin{longtable}{|l|l|l|}
\caption{D-signed pipe dreams for some $w\in \Dc_3$}\\
\hline
$w$ & Pipe dreams & Weight\\ 
\hline
$\overline 1 \overline 2=s_1s_{\hat 1}$ & \ytableaushort{+}*[*(white!25!gray)]{1}\quad \ytableaushort{\oplus}*{1}& $z_1x_i$\\
& \ytableaushort{}*[*(white!25!gray)]{1}\quad \ytableaushort{\%}*{1}& $x_i^2$\\
& \ytableaushort{}*[*(white!25!gray)]{1}\quad \ytableaushort{+}*{1}\quad \ytableaushort{\oplus}*{1}& $x_ix_j$\\
& \ytableaushort{}*[*(white!25!gray)]{1}\quad \ytableaushort{\oplus}*{1}\quad \ytableaushort{+}*{1}& $x_ix_j$\\
\hline
\multicolumn{3}{l}{$\Df_{s_1s_{\hat 1}}=z_1\sum_i x_i+\sum_{i}x_i^2+2\sum_{i<j}x_i x_j=z_1p_1(\xx)+p_1^2(\xx)$}\\
\hline
$312=s_2s_1$ & \ytableaushort{++}*[*(white!25!gray)]{2,1} & $z_1^2$\\
 & \ytableaushort{+}*[*(white!25!gray)]{2,1}\quad  \ytableaushort{\none,+}*{2,1} & $z_1x_i$\\
 & \ytableaushort{+}*[*(white!25!gray)]{2,1}\quad  \ytableaushort{\none+}*{2,1} & $z_1x_i$\\
 & \ytableaushort{}*[*(white!25!gray)]{2,1}\quad  \ytableaushort{++}*{2,1} & $x_i^2$\\
 & \ytableaushort{}*[*(white!25!gray)]{2,1}\quad  \ytableaushort{+}*{2,1}\quad
 \ytableaushort{\none,+}*{2,1} & $x_ix_j$\\
 & \ytableaushort{}*[*(white!25!gray)]{2,1}\quad  \ytableaushort{+}*{2,1}\quad
 \ytableaushort{\none+}*{2,1} & $x_ix_j$\\
 \hline
 \multicolumn{3}{l}{$\Df_{s_2s_1}=z_1^2+2z_1\sum_i x_i+\sum_i x_i^2+2\sum_{i<j}x_ix_j=z_1^2+2z_1p_1(\xx)+p_1^2(\xx)$}\\
\hline
 $\overline{2}3\overline{1}=s_{\hat 1}s_2$ & \ytableaushort{\none,+}*[*(white!25!gray)]{2,1} & $z_2x_1$\\
 & \ytableaushort{\none+}*[*(white!25!gray)]{2,1}\quad  \ytableaushort{\oplus}*{2,1} & $z_1x_i$\\
 & \ytableaushort{}*[*(white!25!gray)]{2,1}\quad  \ytableaushort{\oplus,+}*{2,1} & $x_i^2$\\
 & \ytableaushort{}*[*(white!25!gray)]{2,1}\quad  \ytableaushort{\none,+}*{2,1} \quad  \ytableaushort{\oplus}*{2,1}& $x_ix_j$\\
 & \ytableaushort{}*[*(white!25!gray)]{2,1}\quad  \ytableaushort{\none+}*{2,1} \quad  \ytableaushort{\oplus}*{2,1}& $x_ix_j$\\
 \hline
 \multicolumn{3}{l}{$\Df_{s_{\hat 1}s_2}=z_1\sum_i x_i + z_2\sum_i x_i + \sum_i x_i^2 +2\sum_{i<j}x_ix_j=(z_1+z_2)p_1(\xx)+p_1^2(\xx)$}\\
\hline
$\overline{13}2=s_2s_1s_{\hat 1}$ & \ytableaushort{+}*[*(white!25!gray)]{2,1} \quad  \ytableaushort{\oplus+}*{2,1}& $z_1x_i^2$\\
& \ytableaushort{+}*[*(white!25!gray)]{2,1}\quad  \ytableaushort{\oplus}*{2,1} \quad  \ytableaushort{\none+}*{2,1}& $z_1x_ix_j$\\
& \ytableaushort{+}*[*(white!25!gray)]{2,1}\quad  \ytableaushort{\oplus}*{2,1} \quad  \ytableaushort{\none,+}*{2,1}& $z_1x_ix_j$\\
& \ytableaushort{}*[*(white!25!gray)]{2,1} \quad  \ytableaushort{\%+}*{2,1}& $x_i^3$\\
& \ytableaushort{}*[*(white!25!gray)]{2,1}\quad  \ytableaushort{\%}*{2,1} \quad  \ytableaushort{\none,+}*{2,1}& $x_i^2x_j$\\
& \ytableaushort{}*[*(white!25!gray)]{2,1}\quad  \ytableaushort{\%}*{2,1} \quad  \ytableaushort{\none+}*{2,1}& $x_i^2x_j$\\
& \ytableaushort{}*[*(white!25!gray)]{2,1}\quad  \ytableaushort{+}*{2,1} \quad  \ytableaushort{\oplus+}*{2,1}& $x_ix_j^2$\\
& \ytableaushort{}*[*(white!25!gray)]{2,1}\quad  \ytableaushort{\oplus}*{2,1} \quad  \ytableaushort{++}*{2,1}& $x_ix_j^2$\\
& \ytableaushort{}*[*(white!25!gray)]{2,1}\quad  \ytableaushort{\oplus}*{2,1} \quad  \ytableaushort{+}*{2,1} \quad  \ytableaushort{\none,+}*{2,1}& $x_ix_jx_k$\\
& \ytableaushort{}*[*(white!25!gray)]{2,1}\quad  \ytableaushort{\oplus}*{2,1} \quad  \ytableaushort{+}*{2,1} \quad  \ytableaushort{\none+}*{2,1}& $x_ix_jx_k$\\
& \ytableaushort{}*[*(white!25!gray)]{2,1}\quad  \ytableaushort{+}*{2,1} \quad  \ytableaushort{\oplus}*{2,1} \quad  \ytableaushort{\none,+}*{2,1}& $x_ix_jx_k$\\
& \ytableaushort{}*[*(white!25!gray)]{2,1}\quad  \ytableaushort{+}*{2,1} \quad  \ytableaushort{\oplus}*{2,1} \quad  \ytableaushort{\none+}*{2,1}& $x_ix_jx_k$\\
\hline
 \multicolumn{3}{l}{$\Df_{s_2s_1s_{\hat 1}}=z_1\sum_i x_i^2 + 2z_1\sum_{i<j}x_ix_j+2\sum_{i<j}(x_i^2x_j+x_ix_j^2)+$} \\\multicolumn{3}{r}{$+4\sum_{i<j<k}x_ix_jx_k=z_1p_1^2(\xx)+\frac 2 3 p_1^3(\xx)+\frac 1 3 p_3(\xx)$}\\
\end{longtable}
\def\cprime{$'$}

\end{document}